\tikzset{->-/.style={decoration={
  markings,
  mark=at position .45 with {\arrow{>}}},postaction={decorate}}}
\numberwithin{equation}{section}
\newcommand{\mynewtheorem}[2]{
  \newaliascnt{#1}{dummy}
  \newtheorem{#1}[#1]{#2}
  \aliascntresetthe{#1}
  \expandafter\def\csname #1autorefname\endcsname{#2}
}
\theoremstyle{plain}
\theoremstyle{definition}
\def\C{\mathbb{C}}
\def\Z{\mathbb{Z}}
\def\N{\mathbb{N}}
\def\R{\mathbb{R}}
\def\P{\mathbb{P}}
\def\M{\mathcal{M}}
\def\cM{\overline{\mathcal{M}}}
\def\cJ{\mathcal{J}}
\def\cH{\mathcal{H}}
\def\G{\mathsf{\Gamma}}
\def\cG{\widetilde{\mathsf{\Gamma}}}
\newcommand{\Tev}{{\mathsf{Tev}}}
\newcommand{\vTev}{{\mathsf{vTev}}}
\DeclareMathOperator{\im}{\mathrm{im}}
\DeclareMathOperator{\delbar}{\overline\partial}
\newcommand{\cB}{\mathcal{B}}
\newcommand{\cE}{\mathcal{E}}
\newcommand{\cS}{\mathcal{S}}
\newcommand{\cF}{\mathcal{F}}
\newcommand{\bg}{\underline{\mathsf{g}}}
\newcommand{\bp}{\underline{\mathsf{p}}}
\newcommand{\bq}{\underline{\mathsf{q}}}
\newcommand{\bd}{\underline{\mathsf{d}}}
\newcommand{\bz}{\underline{\mathsf{z}}}
\newcommand{\bx}{\underline{\mathsf{x}}}
\newcommand{\bA}{\underline{\mathsf{A}}}
\newcommand{\bm}{\underline{\mathsf{m}}}
\newcommand{\bv}{\underline{\mathsf{v}}}
\newcommand{\ev}{\mathrm{ev}}
\DeclareMathOperator{\coker}{coker}
\title{Pseudo-holomorphic curves with a fixed complex structure in positive symplectic manifolds}
\author{Alessio Cela and Aleksander Doan\\[0.5cm]}
\date{\vspace{-5ex}}
\begin{document}

\maketitle
\begin{abstract}    \noindent We prove a symplectic version of a conjecture of Lian and Pandharipande: in sufficiently high degree, the fixed-domain Gromov--Witten invariants of positive symplectic manifolds are signed counts of pseudo-holomorphic curves. The original conjecture in the complex algebraic setting was recently disproved by Beheshti et al. However, we show that the statement holds when the complex structure is replaced by a generic almost complex structure. The proof relies on showing that the fixed-domain Gromov–Witten pseudocycle can be constructed without the use of inhomogeneous or domain-dependent perturbations, which answers positively a question posed by Ruan and Tian.
\end{abstract}

\section{Introduction}

\subsection{Fixed-domain curve counts}

Gromov--Witten theory is concerned with counting holomorphic maps from complex curves to a smooth complex projective variety $X$. Given $g, n \in \N_0$ satisfying $2g - 2 + n > 0$ and $A \in H_2(X,\Z)$, let $\cM_{g,n}(X,A)$ be the moduli space of stable maps of arithmetic genus $g$ and homology class $A$, with $n$ marked points. Consider the map
\begin{equation}
    \label{eqn: tau map}
    \tau= \pi \times \mathrm{ev} \colon \cM_{g,n}(X,A) \to \cM_{g,n} \times X^n,
\end{equation}
where $\pi$ encodes stabilization of the domain of a stable map and $\mathrm{ev}$ its values at the $n$ marked points. The Gromov--Witten invariants are defined by pairing the push-forward of the virtual fundamental class of $\cM_{g,n}(X,A)$ by $\tau$ with cohomology classes in $\cM_{g,n} \times X^n$. It is an interesting but difficult question under what conditions on $X,g,n,A$ and the cohomology class these invariants agree with the corresponding geometric counts of holomorphic curves in $X$.

This article focuses on the \textbf{fixed-domain Gromov--Witten invariants}, defined by cohomology classes of the form $\mathrm{PD}[\mathrm{pt}_{\cM_{gn}}] \otimes \gamma$ where $\gamma \in H^*(X^n,\Z)$. The degree of such a class is equal to that of the virtual fundamental class if
\begin{equation}\label{eqn: num constraint}
   2\langle c_1(X), A \rangle +2 r(1-g) = \deg(\gamma) \quad\text{where } r = \dim_\C X,
\end{equation} 
The resulting invariant is a virtual count of stable maps from $n$-marked genus $g$ nodal curves with a \emph{fixed} stabilization subject to constraints specified by the cycle Poincaré dual to $\gamma$.  

Fixed-domain curve counts arise in various areas of algebraic geometry and mathematical physics, and have recently attracted significant interest. One appealing feature of fixed-domain invariants is their computability: their evaluation reduces to a computation in the quantum cohomology of $X$ \cite{BP, Cela}. Moreover, there is an expectation that they are more often enumerative than the general Gromov--Witten invariants. 

For Grassmannians, the fixed-domain Gromov--Witten invariants are computed using the celebrated Intriligator--Vafa formula  \cite{st,bdw, B1,mo}; see also \cite{mop, BP, Cela}. When $\gamma = \mathrm{PD}[\mathrm{pt}_{X^n}]$, these invariants are also known as \textbf{virtual Tevelev degrees} and denoted by $\vTev_{g,n}(X,A)$.  The systematic study of Tevelev degrees for general targets began with \cite{CPS}, motivated by work on scattering amplitudes in mathematical physics \cite{tev}. This work sparked a series of studies \cite{BLLRST, BP, Cela, CL2, CL, FL, lian_pr, LP} connecting the problem to other areas such as interpolation problems, semistability of the tangent bundle of $X$ \cite{cl_hirzebruch}, and tropical geometry \cite{CIL, cd}. .

Lian and Pandharipande showed that when $\langle c_1(X), A \rangle$ is large, there is also a \textbf{geometric Tevelev degree} $\Tev_{g,n}(X,A)$ defined as the number of points in the general fiber of the map $\tau$ restricted to $\M_{g,n}(X,A)$ \cite{LP}. While, in general, $\vTev_{g,n}(X,A) \neq \Tev_{g,n}(X,A)$  \cite{BP}, Lian and Pandharipande conjectured that  $\vTev_{g,n}(X,A) = \Tev_{g,n}(X,A)$ if $X$ is a smooth Fano variety and $\langle c_1(X), A \rangle$ is sufficiently large \cite{LP}. 
This conjecture has been verified in many cases, such as Del Pezzo surfaces \cite{CL2}, projective spaces \cite{FL, BP, CPS, LP}, homogeneous spaces \cite{LP}, and low-degree complete intersections \cite{LP,BLLRST}.  However, the recent work~\cite{BLLRST} provides explicit counterexamples, such as Fano splitting projective bundles over $\mathbb{P}^k$ for $k > 1$, and certain Fano hypersurfaces in projective space where the boundary of the moduli space of maps dominates the target of the map $\tau$ in~\eqref{eqn: tau map}. These examples raise the question of whether the Lian--Pandharipande conjecture can be refined to account for the discrepancy between the virtual and geometric Tevelev degrees. More generally, they motivate the problem of understanding the geometric meaning of fixed-domain Gromov–Witten invariants of Fano manifolds when $\langle c_1(X), A\rangle$ is large. 

\subsection{The Gromov--Witten pseudocycle}

The failure of the Lian--Pandharipande conjecture shows that the fixed-domain Gromov--Witten invariants do not always agree with counts of holomorphic curves. However, as we show in \autoref{thm: main 0} below, they have an enumerative interpretation in symplectic geometry as \emph{signed} counts of \emph{pseudo-holomorphic} curves.

Our result is related to a question posed by Ruan--Tian \cite{ruan1}, which we now explain. Let $(X,\omega)$ be a compact symplectic manifold which is \textbf{positive} meaning $\langle c_1(X,\omega), A \rangle > 0$
for all $A \in H_2(X,\Z)$ with $\langle [\omega], A \rangle > 0$; the main examples being smooth Fano varieties. For positive (more generally, semipositive) manifolds, Ruan--Tian defined the fixed-domain Gromov--Witten invariants by counting, with signs, smooth maps $u \colon C \to X$ from a fixed smooth $n$-marked genus $g$ curve $(C,\bp=(p_1,\ldots,p_n))$ which solve the \textbf{perturbed Cauchy--Riemann equation}:
\begin{equation}
    \label{eqn: inhomogeneous perturbation}
    \overline\partial_J(u)(z) = \nu(z, u(z))
\end{equation}
Here $J$ is a generic almost complex structure on $X$ compatible with $\omega$, $\delbar_J$ is the induced Cauchy--Riemann operator, and $\nu$ is a generic section of the bundle of complex-antilinear homomorphisms $TC \to TX$ over $C \times X$.  (An alternative approach of McDuff--Salamon is to perturb the fibration $C \times X \to C$ and count pseudo-holomorphic sections \cite[Chapter 8]{mcduff}.)

To show that such a count is well-defined and independent of $(J,\nu)$, Ruan--Tian construct the \textbf{Gromov--Witten pseudocycle}. Denote by  $\M(C,\bp; X,A,J,\nu)$ the space of solutions to \eqref{eqn: inhomogeneous perturbation} representing class $A$ . The advantage of using the perturbed equation \eqref{eqn: inhomogeneous perturbation} is that $\M(C,\bp; X,A,J,\nu)$ is a smooth manifold of expected dimension for generic $(J,\nu)$, and so are all the strata of its Gromov--Kontsevich compactification $\cM(C,\bp; X,A,J,\nu)$. Positivity then implies that the evaluation map
\begin{equation}
    \label{eqn: evaluation map intro}
    \mathrm{ev} \colon \M(C,\bp; X,A,J,\nu) \to X^n
\end{equation}
is a \textbf{pseudocycle}: its image has compact closure, and its limit set has real dimension at most $\dim_\R \M(C,\bp; X,A,J,\nu) - 2$. Such a pseudocycle defines a class in $H_*(X^n,\Z)$ which agrees with the pushforward of the virtual fundamental class of $\cM(C,\bp; X,A,J,\nu)$ \cite[Section 6]{hirshi}. Standard transversality theory implies that the pairing of this class with $\gamma \in H^*(X^n,\Z)$ is a signed count of solutions to \eqref{eqn: inhomogeneous perturbation} whose image intersects a cycle in $X^n$ Poincaré dual to $\gamma$. 

Ruan and Tian asked in \cite[Remark 2.10]{ruan1} whether the Gromov--Witten pseudocycle \eqref{eqn: evaluation map intro} can be realized by the moduli space of $J$--holomorphic maps: solutions to \eqref{eqn: inhomogeneous perturbation} with inhomogeneous term $\nu=0$. We have inclusions
\[
\{ \text{complex structures} \} \subset \{ \text{almost complex structures} \} \subset \{ \text{Ruan--Tian perturbations \eqref{eqn: inhomogeneous perturbation}} \}.
\]
While complex structures are too rigid to define the pseudocycle, it is an interesting question whether we can count pseudo-holomorphic curves rather than solutions to \eqref{eqn: inhomogeneous perturbation}. This is, in general, a hard problem; see \cite[Remark 2.10]{ruan1} and \cite{ionel2, ionel3, zinger2, zinger3}. The key difficulty is that when $\nu = 0$, the space of stable pseudo-holomorphic maps
\[
    \cM(C,\bp;X,A,J) \coloneq \cM(C,\bp; X,A,J,\nu=0)
\] 
has strata of incorrect dimension, consisting of maps that are constant or multiply covered on some components. In fact, this is already true for $\M(C,\bp;X,A,J)$, which decomposes into simple maps $\M^*(C,\bp;X,A,J)$ and multiple covers.  Even with the positivity condition, these strata often do not have real codimension $\geq 2$. Nonetheless, we show that for high degree curves, the evaluation map \eqref{eqn: evaluation map intro} for $\nu=0$ still defines a pseudocycle.

\begin{theorem}\label{thm: main 0}
    Let $(X,\omega)$ be a compact positive symplectic manifold of real dimension $2r\geq 6$. Denote by $\cJ$ the space of $C^\ell$ almost complex structures on $X$ compatible with $\omega$, and for $J \in \cJ$ denote by $\M^*(C,\bp,X,A,J)$ the moduli space of simple $J$-holomorphic maps $C \to X$ from an $n$-marked genus $g$ curve $(C,\bp)$ representing a homology class $A \in H_2(X,\Z)$. 

    There exist constants $c = c(r,g)$ and $d=d(r)$ such that for a generic almost complex structure $J \in \cJ$ and a generic marked curve $(C,\bp) \in \M_{g,n}$ the evaluation map
    \[
        \mathrm{ev} \colon \M^*(C,\bp;X,A,J) \to X^n
    \]
    is a pseudocycle whenever $\langle c_1(X), A \rangle \geq c$ and $\mathrm{codim}(\mathrm{ev}) \leq dn$. 
\end{theorem}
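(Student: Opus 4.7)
\emph{Proof sketch.} The plan is to verify the two defining properties of a pseudocycle: compactness of $\overline{\ev(\M^*(C,\bp;X,A,J))}$ in $X^n$, and the estimate $\dim_\R \Omega_{\ev} \le 2rn - 2$ on its limit set. Compactness follows from Gromov compactness applied to the stable map compactification $\cM(C,\bp;X,A,J) \supset \M^*(C,\bp;X,A,J)$, which also gives $\Omega_{\ev} \subseteq \ev(\cM \setminus \M^*)$. So the real task is to control the dimension of the evaluation image of each boundary stratum of $\cM$ using positivity together with the hypotheses on $\langle c_1(X), A\rangle$ and $\mathrm{codim}(\ev)$.

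First I would set up the transversality input. Standard theory for simple $J$-holomorphic maps with generic $(J, C, \bp)$ yields that $\M^*(C,\bp;X,B,J)$ is smooth of expected real dimension $2\langle c_1(X), B\rangle + 2r(1-g)$ simultaneously for every $B \in H_2(X,\Z)$ with $\langle [\omega], B\rangle \le \langle [\omega], A\rangle$, and analogously for the bubble moduli $\M^*_{0,m}(X, B', J)$. I then stratify $\cM \setminus \M^*$ by combinatorial type: a dual graph $\Gamma$ with one genus-$g$ vertex $v_0$ carrying $C$ plus rational bubble trees attached at various points, a degree assignment $(B_v)_v$ with $\sum_v B_v = A$, and a multiplicity $k_v \ge 1$ giving a factorization through a simple map of class $B_v/k_v$. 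A further genericity condition on $(C, \bp)$ controls the (essentially finite) list of nontrivial holomorphic maps out of $C$. A standard dimension count for $\cS_\Gamma$ then combines the simple-map moduli dimensions, subtracts node-matching constraints, and adds the Hurwitz-type contribution $\dim \M_{0,m}(\P^1, k_v) = 2k_v + m - 3$ for each non-simple vertex.

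Since the marked points lie on $C$, the evaluation $\ev|_{\cS_\Gamma}$ factors through the moduli of $u|_C$ together with a correction for bubbles rooted at marked points, and bookkeeping yields
\[
\dim_\R \ev(\cS_\Gamma) \;\le\; \min\bigl(2\langle c_1, B_{v_0}\rangle + 2r(1-g) + \delta_\Gamma,\; 2rn\bigr),
\]
where $\delta_\Gamma$ is at most $2r$ times the number of marked points at which a bubble is rooted. In the regime $\dim_\R \M^*(C,\bp;X,A,J) \le 2rn$, positivity gives $\langle c_1, A - B_{v_0}\rangle \ge 1$ whenever any bubble is present, so the main-component dimension drops by at least two and the desired codimension bound is immediate. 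The main obstacle is the opposite regime $\dim_\R \M^* > 2rn$, where $\ev$ is surjective and I must show that no boundary stratum has surjective evaluation. Here the interplay between multiply covered bubbles (which inflate the stratum dimension by the Hurwitz term) and bubbles rooted at marked points (which inflate $\delta_\Gamma$) must be balanced against the positivity gain. Choosing $c(r,g)$ large enough forces the Hurwitz correction along every cover to be strictly dominated by the positivity gain $2\langle c_1, (k_v-1)B_v\rangle$, and taking $d(r)$ small enough bounds the marked-point contribution to $\delta_\Gamma$; summing these estimates over the finitely many combinatorial types then yields the codimension-two bound on $\Omega_{\ev}$.
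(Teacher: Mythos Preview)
Your overall strategy---Gromov compactness plus a stratum-by-stratum dimension bound---is the same as the paper's, but the sketch glosses over precisely the cases that carry all the difficulty, and in its current form the argument does not close. The most serious gap is the treatment of the main component. You write ``the marked points lie on $C$'' and bound everything in terms of $\langle c_1, B_{v_0}\rangle$ plus a correction $\delta_\Gamma$, but in the strata that matter the restriction $u|_{C_0}$ can itself be a degree-$d_0$ cover of a simple map (or even constant). Your Hurwitz term $\dim\M_{0,m}(\P^1,k_v)$ is stated only for genus-zero bubbles; for the genus-$g$ main component the relevant Hurwitz space has dimension $3g^s-3+m+b$ with $b\lesssim 2d_0$, and the naive bound this gives is \emph{not} beaten by positivity alone. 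The paper needs an additional input (its Lemma~4.3): a lower bound on the \emph{rank} of the map from the Hurwitz space to $\M_{g^s,m}$, which lets one replace the Hurwitz dimension by roughly $4d_0$. Even then the resulting inequality is not uniformly negative, and one must split into three regimes according to how the number $\ell$ of marked points lying on bubbles compares to $n$ and how $d_0$ compares to $n$; this is where the specific constant $d(r)=(r-1-\rho)/5$ appears, and your sketch does not produce it.

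A second gap is the bound $\delta_\Gamma\le 2r\cdot(\text{number of marked points on bubbles})$. If $\ell$ of the $n$ marked points sit on bubbles, this gives $\delta_\Gamma\le 2r\ell$, which when $\ell$ is close to $n$ swamps any gain from positivity. The paper compensates by tracking the number of \emph{edges} in the dual graph: the fixed-domain shape forces $|E(\Gamma)|\ge 2\ell$, and each edge costs a dimension in the simple-map index formula, so one recovers a term $-\ell$ that balances the $+\ell$ hidden in your $\delta_\Gamma$. Without this edge-counting (and the companion observation that components with identical image must be merged, further increasing $|E|$ or decreasing the effective class), your inequality in the ``opposite regime'' does not hold. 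Finally, you never address strata where two distinct simple components have the same image curve; transversality for simple maps requires pairwise-distinct images, and handling this case is what forces the paper's refined simplification (merging such components into a single higher-genus piece) rather than the cruder McDuff--Salamon reduction.
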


\begin{remark}
    ~
    \begin{enumerate}
    \item By generic we mean: from a subset containing a countable intersection of open dense subsets; such a subset is dense by the Baire category theorem. While \autoref{thm: main 0} is stated for the space of $C^\ell$ of almost complex structures, a standard argument by Taubes allows one to extend it to the space of $C^\infty$ structures \cite{mcduff}. 
    \item
    This result is a special case of \autoref{thm: main II}, where the constant $d(r)$ is made explicit; the constant $c(r,g)$ can be made explicit as well. It is an interesting question whether this bound on $\mathrm{codim}(\mathrm{ev})$ is sharp.
    \end{enumerate}
\end{remark}

\begin{corollary}
    In the situation of \autoref{thm: main 0}, let $\gamma \in H_*(X^n,\Z)$ be a class of degree $\dim_\R \M^*(C,X,J;A)$ and let $f \colon V \to X^n$ be a pseudocycle Poincaré dual to $\gamma$. For a generic $J \in\cJ$, the fiber product of $\mathrm{ev}$ and $f$ is a finite set, and the fixed-domain Gromov--Witten invariant associated with $\gamma$ is equal to the signed count of points in this fiber product.
\end{corollary}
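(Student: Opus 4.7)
The plan is to combine \autoref{thm: main 0} with standard pseudocycle intersection theory and a bordism argument comparing the new $J$--holomorphic count with the Ruan--Tian count. First I would invoke the pseudocycle intersection pairing (as developed in \cite[Chapter 6]{mcduff} and reviewed in \cite[Section 6]{hirshi}): two pseudocycles of complementary dimensions in $X^n$ can be made transverse by a generic perturbation, and the resulting signed count agrees with the homological pairing. Transversality of $\mathrm{ev}$ with $f$ is arranged by perturbing $J$ within the residual subset of $\cJ$ on which \autoref{thm: main 0} holds, by a direct adaptation of the usual parametric transversality argument for evaluation maps of simple pseudo-holomorphic curves. Once transverse, the fiber product is a smooth zero-dimensional manifold, and its finiteness follows from compactness of the closure of $\mathrm{im}(\mathrm{ev})$ together with the pseudocycle condition: the limit set has real codimension at least two in $X^n$, hence can be arranged to avoid $\mathrm{im}(f)$.

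It then remains to identify this signed count with the fixed-domain Gromov--Witten invariant. Following Ruan--Tian, the invariant is defined as the pairing of $\gamma$ with the class of the pseudocycle $\mathrm{ev}_{J',\nu} \colon \M(C,\bp;X,A,J',\nu) \to X^n$ for a generic $(J',\nu)$ with $\nu \neq 0$, so it suffices to prove the equality $[\mathrm{ev}] = [\mathrm{ev}_{J',\nu}]$ in $H_*(X^n,\Z)$. I would establish this by exhibiting an explicit pseudocycle bordism: take a generic path $(J_t,\nu_t)_{t\in[0,1]}$ in the space of Ruan--Tian perturbations from $(J,0)$ to $(J',\nu)$, and use the parameterized moduli space of simple solutions together with its evaluation map to $X^n$ to realise the bordism.

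The main obstacle is verifying the pseudocycle bordism property, i.e.\ the codimension-two bound on the limit set of the parameterized evaluation map with one extra parameter. For $t$ bounded away from zero the nonvanishing inhomogeneous term trivialises the constant and multiply covered strata responsible for the difficulty at $\nu = 0$, so the delicate point is a neighbourhood of $t = 0$, where the stratification and dimension counts from the proof of \autoref{thm: main 0} must be carried out uniformly in $t$. The hypothesis $\langle c_1(X), A \rangle \geq c(r,g)$ should provide exactly enough slack in the dimension count to absorb the extra parameter, so I expect the argument of \autoref{thm: main 0} to go through with only minor parametric modifications, completing the identification of the signed count with the fixed-domain Gromov--Witten invariant.
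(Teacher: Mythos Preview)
The paper does not supply an explicit proof of this corollary; it is stated as an immediate consequence of \autoref{thm: main 0} together with the preceding paragraph relating the Ruan--Tian pseudocycle to the pushforward of the virtual fundamental class via \cite[Section 6]{hirshi}. Your outline is correct and is the natural way to make that implication precise.

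One simplification of your final step: you do not actually need to rerun the argument of \autoref{thm: main 0} parametrically near $t=0$. Any Gromov limit along a sequence with $t_k\to 0$ lands in a boundary stratum or in the multiple-cover locus of the compactified moduli space at $\nu=0$, and \autoref{thm: main} (applied at the single parameter value $t=0$) already bounds the image of these strata under $\mathrm{ev}$ by real codimension $\geq 2$ in $X^n$. Since this occurs over the single slice $\{t=0\}$, it contributes a set of real dimension at most $\dim_\R W - 3$ to the limit set of the parameterized evaluation map. For $t$ bounded away from $0$, the usual parameterized Ruan--Tian transversality over a generic path controls all remaining boundary strata with one extra dimension, giving dimension at most $\dim_\R W - 2$. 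Together these give the pseudocycle-bordism condition directly, so no uniform-in-$t$ version of the stratification estimates is required.
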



Interestingly, the condition that $\langle c_1(X), A \rangle$ is large is essential in our proof of \autoref{thm: main 0}, but plays no role when using Ruan--Tian perturbations. This condition appears also in algebraic geometry. While the fixed-domain Gromov--Witten invariants of Fano varieties are not always enumerative — that is, $\vTev_{g,n}(X,A) \neq \Tev_{g,n}(X,A)$ in general, even for large $\langle c_1(X), A \rangle$ — they are found to be positive when $\langle c_1(X), A \rangle$ is large in all known examples \cite{BP, Cela, CL2}.  Compare, for instance, \cite[Theorem 1.13]{BLLRST} with \cite[Theorem 5.19]{BP} for examples in which positivity holds, but enumerativity remains unknown.

\begin{conjecture}
    Let $X$ be a smooth Fano variety of complex dimension $r$. The fixed-domain Gromov--Witten invariants are positive when $\langle c_1(X), A \rangle $ is large compared to $r$ and $g$.
\end{conjecture}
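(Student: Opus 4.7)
The plan is to upgrade \autoref{thm: main 0} from an enumerative interpretation to a positivity statement. By that theorem (and its extension to arbitrary constraints $\gamma$), for $\langle c_1(X),A\rangle$ large enough the fixed-domain invariant equals the signed intersection number of the evaluation pseudocycle $\mathrm{ev}\colon \M^*(C,\bp;X,A,J)\to X^n$ with a cycle Poincar\'e dual to $\gamma$. Each transverse simple curve in the fiber contributes $\pm 1$ according to the orientation of the linearized Cauchy--Riemann operator, so positivity reduces to two sub-questions: (a) all such signs are $+1$, and (b) at least one such curve exists.

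For (a), my approach is to connect the generic compatible almost complex structure $J\in\cJ$ to the integrable complex structure $J_0$ underlying the Fano variety $X$. The signed count is independent of $J$, since \autoref{thm: main 0} produces a pseudocycle whose homology class pairs with $\gamma$ to yield a topological invariant. At $J_0$ the moduli space of simple maps is a complex analytic space, and at each regular point the canonical complex orientation agrees with the Cauchy--Riemann orientation, so every contribution is $+1$. When $\langle c_1(X),A\rangle$ is large compared to $r$ and $g$, every simple $J_0$-holomorphic map $u\colon C\to X$ representing $A$ should be regular: $u^*TX$ has large average degree, hence $H^1(C,u^*TX)=0$ by a Serre vanishing argument, and the moduli space is smooth and complex at $u$.

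For (b), strict positivity requires exhibiting one simple map meeting the constraints. I would construct such a map by a genus-induction scheme: first produce simple rational curves through the $n$ marked-point constraints using the non-vanishing of genus-$0$ quantum products in the large-$A$ regime (compare \cite{BP, Cela}) together with known rational connectedness of Fano varieties in high index; then glue these rational curves along nodes and smooth to obtain a simple map from a smooth genus-$g$ curve. Smoothability of the resulting nodal maps is controlled by $H^1$ of their normal sheaf, which vanishes in the same large-degree regime by the Serre vanishing used in (a).

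The hardest step will be (a): reconciling the symplectic orientation of the pseudocycle with the complex orientation at $J_0$ across the deformation $J\rightsquigarrow J_0$. The path will inevitably cross non-transverse walls consisting of multiply covered or non-simple maps; although the pseudocycle property of \autoref{thm: main 0} guarantees these walls have real codimension $\geq 2$ and so do not affect the homology class, controlling the orientation across them requires a delicate wall-crossing analysis. A secondary difficulty is quantitative: one needs a Serre-vanishing bound for $H^1(C,u^*TX)$ on the nose of simple curves of class $A$ that is uniform in $X$ and matches the constant $c(r,g)$ of \autoref{thm: main 0}; for Fano targets admitting obstructed curves (such as certain high-index hypersurfaces or the splitting projective bundles of~\cite{BLLRST}), establishing such a bound at the level of simple maps, while allowing obstructions to persist on the non-simple strata, is the conceptual crux of the conjecture.
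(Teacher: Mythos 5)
This statement is posed as an open conjecture in the paper; the authors do not prove it, and they only speculate that positivity \emph{might} be related to the interpretation of the invariants as counts of unperturbed pseudo-holomorphic curves. Your proposal therefore cannot be compared to a proof in the paper, and as written it has a fatal gap at its central step (a).

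The reduction to the integrable structure $J_0$ is exactly what fails for general Fano targets, and the paper itself is built around this failure. If every simple $J_0$-holomorphic map of large degree were regular and the degenerate strata at $J_0$ had the expected codimension, then the geometric count at $J_0$ would compute the invariant and one would have $\vTev_{g,n}(X,A)=\Tev_{g,n}(X,A)$; but the counterexamples of Beheshti et al.\ cited in the introduction (Fano splitting projective bundles over $\mathbb{P}^k$, certain hypersurfaces) show that the boundary of the moduli space can dominate the target of $\tau$ even for large $\langle c_1(X),A\rangle$, so the count at $J_0$ is \emph{not} the invariant. The specific technical claim you lean on --- that $H^1(C,u^*TX)=0$ because $u^*TX$ has large degree ``by Serre vanishing'' --- is false: Serre vanishing concerns twists of a fixed sheaf by high powers of an ample line bundle, whereas $u^*TX$ is a rank-$r$ bundle on a curve whose large total degree says nothing about the degrees of its sub- or quotient bundles; the BLLRST examples are precisely cases where the restricted tangent bundle is unbalanced and obstructions persist at arbitrarily high degree. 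Consequently the ``wall-crossing'' from generic $J$ to $J_0$ that you defer as a delicate secondary issue is in fact where the entire content of the conjecture lives: the excess contributions picked up along the deformation have no a priori sign, and \autoref{thm: main 0} gives you no control over them because it only concerns generic $J$. Step (b) has the same problem: smoothing a nodal comb of rational curves requires an unobstructedness statement that is again not implied by large degree alone. In short, your outline reproduces the (disproved) Lian--Pandharipande heuristic rather than circumventing it, and no proof of positivity is obtained.
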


It is possible that the positivity of the fixed-domain Gromov--Witten invariant is related to their interpretation as counts of  \emph{unperturbed} pseudo-holomorphic curves.

\subsection{Stratification of the moduli spaces}

\autoref{thm: main 0} is a consequence of a much more general result which analyzes the subsets 
\[
    \M_\G(X,A,J) \subset \cM_{g,n}(X,A,J)
\] 
consisting of stable maps whose domain is modelled on an $n$-marked genus $g$ graph $\G$. These subspaces are not, in general, smooth and of expected dimension. For example, if $\G = T_{g,n}$ is the graph consisting of a single vertex of genus $g$ with $n$ markings, then $\M_\G(X,A,J) = \mathcal{M}_{g,n}(X,A,J)$ can be further decomposed into simple maps and multiple covers. Similarly, for a general graph $\G$ we have the open subset
$
\M_\G^*(X,A,J) \subset \M_\G(X,A,J)
$
of simple maps, which is smooth and of expected dimension for a generic $J$. 

Let $\M_\G(X,A,\cJ) \to \cJ$ be the universal space of stable maps over $\cJ$. For $(C,\bp) \in \M_{g,n}$ denote by $\mathcal{M}_{\G}(C, \underline{\mathsf{p}};X,A,\cJ)$ the fiber of the stabilization map $\pi_{\G}: \cM_\G(X,A,\cJ) \to \cM_{g,n}$ over $(C, \underline{\mathsf{p}})$. Combine the evaluation map \eqref{eqn: tau map} with the projection to $\cJ$ to define
\begin{equation}\label{eqn: map tau}
\mathrm{ev}_{\G}:\mathcal{M}_{\G}(C, \underline{\mathsf{p}}; X, A,\cJ) \to X^n \times \cJ;
\end{equation}

\begin{theorem}\label{thm: main} 
    Let $(X,\omega)$ be a compact positive symplectic manifold of real dimension $2r\geq 6$. There exist constants $c = c(r,g)$ and $d=d(r)$ with the following property. Suppose that $\langle c_1(X), A \rangle \geq c$ and the virtual codimension $2k$ of $\mathrm{ev} \colon \M(C,\bp;X,A,J) \to X^n$ is at most $dn$.
    For a generic marked curve $(C,\bp) \in \M_{g,n}$,  
    \begin{itemize}
    \item[(i)] if $\mathsf{\Gamma} \neq T_{g,n}$, then the image of the map $\mathrm{ev}_{\mathsf{\Gamma}}$ in \eqref{eqn: map tau} has codimension $\geq 2k+2$;
    \item[(ii)] if $\mathsf{\Gamma} = T_{g,n}$, then the the image under $\mathrm{ev}_\G$ of the space of multiple covers
    \[
     \mathcal{M}_{\mathsf{\Gamma}}(C, \underline{\mathsf{p}}; X, A, \cJ) \setminus \mathcal{M}_{\mathsf{\Gamma}}^*(C, \underline{\mathsf{p}} ; X, A, \cJ)
    \]
     has codimension $\geq 2k+2$. 
\end{itemize}
\end{theorem}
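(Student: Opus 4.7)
The plan is to stratify each universal moduli space $\M_{\G}(C, \bp; X, A, \cJ)$ even more finely by the multi-covering type of each irreducible component, perform a dimension count on each piece, and use positivity to obtain $\dim \M_{\G, \bd} \leq \dim \M(C, \bp; X, A, \cJ) - 2$ uniformly; this is equivalent to image codimension $\geq 2k+2$ in $X^n \times \cJ$. For a graph $\G$ and a tuple $\bd = (d_v)_{v \in V(\G)}$ of positive integers, the locus $\M_{\G, \bd} \subset \M_\G$ consists of stable maps whose restriction $u_v$ to the $v$-th component factors as $u_v = u'_v \circ \phi_v$, with $u'_v$ simple in class $A_v / d_v$ and $\phi_v$ a degree-$d_v$ branched cover of $C_v$.

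I would first establish smoothness by applying standard Banach manifold transversality (as in McDuff--Salamon, Ch.~3) to the simple factors $u'_v$ with $J \in \cJ$ as the parameter, obtaining via Sard--Smale a Baire-generic set of $J$ for which each $\M_{\G, \bd}$ is smooth of expected dimension. A preliminary reduction from stability: if $(C, \bp)$ is smooth and $\G \neq T_{g,n}$, then $\G$ is a union of rational bubble trees attached to the single main vertex, and on each bubble the map is non-constant, so positivity gives $\langle c_1(X), A_v \rangle \geq 1$ for every bubble vertex.

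The dimension count for the simple stratum $\bd = \underline{1}$ is then direct: summing the expected dimensions of simple maps on each component, the $2$-dimensional freedom of each attachment point on $C$, the moduli of each bubble domain, the $-2r$ node-matching codimensions, and the reparametrization quotients, one finds that each bubble contributes a net $-2$ to $\dim \M_{\G, \underline{1}} - \dim \M$. Since $\G \neq T_{g,n}$ forces at least one bubble, part (i) in the simple-type case follows.

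The hard part is the control of multiple-cover strata $\bd \neq \underline{1}$. On a bubble vertex, replacing a degree-$d_v$ cover of class $A_v$ by a simple map of class $A_v/d_v$ saves $2 \langle c_1(X), A_v\rangle (1 - 1/d_v)$ in simple-map dimension but introduces the moduli of degree-$d_v$ self-covers of $\mathbb{P}^1$ whose dimension grows linearly in $d_v$; positivity together with $\langle c_1(X), A\rangle \geq c(r,g)$ lets the former dominate. On the main vertex (the content of part (ii)), writing $u = u' \circ \phi$ with $\phi : C \to C'$ of degree $d \geq 2$, the genus $g'$ of $C'$ is constrained by Riemann--Hurwitz, and for generic $(C, \bp) \in \M_{g,n}$ the moduli of branched covers $\phi$ is severely restricted: finite by the de Franchis--Severi theorem when $g' \geq 2$, while for $g' \in \{0,1\}$ one must invoke the gonality of a generic genus-$g$ curve and a direct analysis of elliptic isogenies. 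Packaging these estimates uniformly in $d$ and $\bd$, while using the hypothesis $\mathrm{codim}(\mathrm{ev}) \leq dn$ to absorb the marked-point contributions, is the source of the explicit constants $c(r,g)$ and $d(r)$, and is most delicate in low genus.
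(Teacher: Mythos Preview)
Your simple-stratum count for (i) is correct: each bubble edge contributes $-1$ to the complex index, so $\G\neq T_{g,n}$ with all components simple gives the required drop. The gap is in the multiple-cover control.

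Your claim that ``positivity together with $\langle c_1(X),A\rangle\geq c(r,g)$ lets the former dominate'' on a bubble is wrong, because $\langle c_1(X),A\rangle$ being large says nothing about $\langle c_1(X),A_v\rangle$ for an individual bubble class. Concretely: if the underlying simple class $B_v=A_v/d_v$ has $c_1(B_v)=1$ (which positivity allows), then the stratum of $d_v$-fold covers of simple $B_v$-maps from $\P^1$ has complex dimension $[c_1(B_v)+r-3]+[2d_v-2]$, exceeding the virtual dimension $d_vc_1(B_v)+r-3$ by $(d_v-1)(2-c_1(B_v))=d_v-1>0$. So your per-bubble bookkeeping can produce arbitrarily large excess, and no choice of $c(r,g)$ on the \emph{total} class fixes this. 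You also omit a separate stratum entirely: two distinct simple components $u_\alpha,u_\beta$ can have the same image curve, which is not detected by your degree vector $\bd$.

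The paper circumvents both issues by not doing a direct dimension count stratum-by-stratum. Instead it enriches $\G$ to an \emph{augmented} graph $\cG=(\G,\bm,\bd,\bA,h)$, where $h$ records same-image pairs and $\bm$ is a weight vector, and introduces a simplification process that replaces a multiply-covered or repeated component by its simple image while multiplying the weight accordingly. The invariant carried through the induction is the \emph{weighted} class $[\cG,\bm]=\sum m_\alpha d_\alpha A_\alpha$, which continues to satisfy the numerical constraint; the actual class $[\cG]$ strictly drops. One then inducts on $\bd$ lexicographically until every non-main component is simple with distinct images, and only \emph{then} applies the transversality index formula---now with $c_1([\cG])\leq c_1([\cG,\bm])$ giving the needed slack. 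For the main component (your part (ii)), de Franchis--Severi and gonality are too coarse to get a uniform estimate; the paper instead parametrizes the covers by a Hurwitz space $\cH$ and bounds $\dim\cH-\mathrm{rank}(d\gamma)$ by $O(d_0)$ via Riemann--Hurwitz, then balances this against the loss in $c_1$ through a case analysis on how many markings lie on the main component. That case analysis is where the specific form $k\leq \frac{r-1-\rho}{5}n$ of the bound $d(r)$ originates; it is not an artifact of ``absorbing marked-point contributions'' but is forced by the constraints $\varepsilon+\delta>1$ and $4\delta<\varepsilon$ needed to close the estimates.
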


Recall that a subset $S \subset B$ of a Banach manifold $B$ is said to have codimension at least $\ell$ if there exists a second countable Banach manifold $E$ and a Fredholm map $f \colon E \to B$ such that $\mathrm{index}(f) \leq -\ell$ and $S \subset f(E)$. 

The main ideas behind the proof of \autoref{thm: main} are as follows.

\begin{enumerate}[(i)]

\item The first step, discussed in \autoref{sec: stratification}, is to further stratify the moduli spaces of maps by introducing the notion of an \textbf{augmented graph} $\cG$, which enhances a dual graph $\G$ of the domain curve with certain additional combinatorial data. This data records the degree of the stable map on each component, the homology class of the underlying simple map, whether two components have the same image, and auxiliary weights $\bm$ associated with each vertex. In particular, we define the associated \textbf{weighted homology class} $[\G, \underline{\mathsf{m}}]$.

\item In \autoref{sec: simplification process} we describe a \textbf{simplification process}: given a map $u$ modelled on an augmented graph $\cG$, we define a simple map $u^s$ modelled on a new augmented graph $\cG^s$. This construction is inspired by \cite[Section~6.1]{mcduff}, but  is substantially more refined. In particular, the genus and homology class of the map may change. However, the weighted homology class remains unchanged.

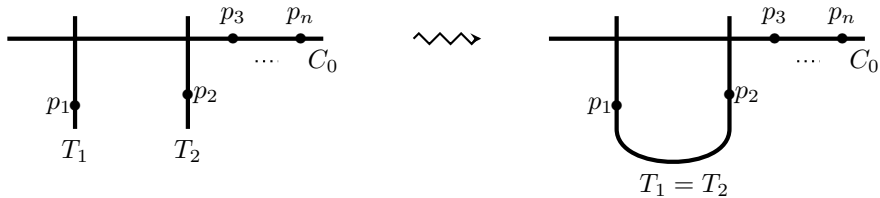
\begin{figure}[h!]
\centering
\begin{tikzpicture}[xscale=0.3,yscale=0.3,
    >=stealth,
    decoration={zigzag, amplitude=2, segment length=8}]

\begin{scope}
  \draw[ultra thick] (5,7) -- (19,7);
  \node at (15,7) {$\bullet$};
  \node at (15,8) {$p_3$};
  \node at (18,7) {$\bullet$};
  \node at (18,8) {$p_n$};
  \node at (19,6) {$C_0$};
  \draw[thick,dotted] (16,6) -- (17,6);

  \draw[ultra thick] (8,8) -- (8,3);
  \node at (8,4) {$\bullet$};
  \node at (7.3,4) {$p_1$};
  \node at (8,2) {$T_1$};

  \draw[ultra thick] (13,8) -- (13,3);
  \node at (13,4.5) {$\bullet$};
  \node at (13.8,4.5) {$p_2$};
  \node at (13,2) {$T_2$};
\end{scope}

\draw[thick,->,decorate] (23,7) -- (26,7);

\begin{scope}[shift={(24,0)}]
  \draw[ultra thick] (5,7) -- (19,7);
  \node at (15,7) {$\bullet$};
  \node at (15,8) {$p_3$};
  \node at (18,7) {$\bullet$};
  \node at (18,8) {$p_n$};
  \node at (19,6) {$C_0$};
  \draw[thick,dotted] (16,6) -- (17,6);

  \draw[ultra thick] (8,8) -- (8,3)
                     to[out=-90,in=-90] (13,3)
                     -- (13,8);

  \node at (8,4) {$\bullet$};
  \node at (7.3,4) {$p_1$};
  \node at (13,4.5) {$\bullet$};
  \node at (13.8,4.5) {$p_2$};

  \node at (11,0.5) {$T_1=T_2$};
\end{scope}

\end{tikzpicture}
\caption{An example of the simplification process in \S\ref{sec: simplification process}, where two separate components $T_1$, $T_2$ (left) are identified to form a single one $T_1 = T_2$ (right).}
\label{fig:simplification-step}
\end{figure}

As an example, suppose $u \colon C \to X$ is a $J$-holomorphic map from a curve $C$ with three components $T_1,T_2$ and $C_0$, as on the left in \autoref{fig:simplification-step}.
Suppose that two rational tails $T_1$ and $T_2$ have the same image curve under $u$ and that $u|_{T_1}$ and $u|_{T_2}$ have degree $1$ onto their images, but the points $u(p_1)$, $u(p_2)$, $u(T_1 \cap C_0)$, and $u(T_2 \cap C_0)$ are all distinct. Then the domain curve of the associated simple map $u^s \colon C^s \to X$ (on the right in \autoref{fig:simplification-step}) has only two components, and $C^s$ has arithmetic genus one. This differs from the procedure in \cite[Section 6.1]{mcduff}, which produces a map of arithmetic genus zero.

The McDuff--Salamon simplification is insufficient for our purposes, as it does not retain enough geometric information. In the above example, it loses the information on which point on $T_1$ maps under $u$ to the same point in $X$ as $T_2 \cap C_0$. Our refinement preserves this data and yields a moduli space whose expected dimension is smaller: this finer control of the dimension is essential for our arguments.

\item   The proof of \autoref{thm: main}  involves only augmented graphs of a special kind, corresponding to stable maps in the fiber of the map $\tau$ and their simplifications. In \autoref{sec: stratification}, we introduce the \textbf{fixed-domain constraint} for augmented graphs. For example, observe that unless $h_1(\G) = 0$, the stabilized domain curve of any map in $\M_{\G}(X, A, \cJ)$ lies in the boundary $\cM_{g,n} \smallsetminus \mathcal{M}_{g,n}$, and hence $\M_{\G}(C, \underline{\mathsf{p}}; X, A, \cJ)$ is empty. Thus, we may assume that the graph $\G$ contains a unique vertex of genus $g$, which we label by $0$, and that each connected component of $\G \smallsetminus \{0\}$ contains at most one marking. 

The class of augmented graphs $\G$ with $\G$ of such shape is not stable under the simplification procedure introduced in \autoref{sec: simplification process}. Augmented graphs satisfying the fixed-domain constraint form a class that, with some exceptions, is closed under simplification and includes the graphs of the simple form described above.

\item  The most technical part of the paper is \autoref{sec: proof of main II} which proves  \autoref{thm: main II}: a refined version of \autoref{thm: main} for augmented graphs satisfying the fixed-domain constraint. Via the simplification process, we reduce the proof to the situation where all maps in the moduli space either are simple or fail to be simple due to the main component being multiply covered, constant, or having the same image as another component. These are the most delicate cases and each of them is treated separately using different methods.

\item In \autoref{subsec: thm II to I} we derive the main results from \autoref{thm: main II}.  Finally, for completeness we include in \autoref{sec: transversality} a short proof of the transversality theorem for simple maps modelled on an arbitrary genus $g$ graphs (proved for genus zero in \cite{mcduff} and for arbitrary genus using inhomogeneous perturbations in \cite{ruan1,ruan2}).
\end{enumerate}

The techniques developed in this paper, especially the decomposition of the moduli spaces of stable maps into a finer stratification that records additional combinatorial data, along with the refined simplification process that preserves this data, are quite general and not limited to the study of fixed-domain Gromov–Witten invariants. We expect these methods to have further applications in enumerative geometry, particularly in problems where the domain curve is required to satisfy specified constraints.

\subsection{Acknowledgments}

The question of enumerativity for almost complex structures was raised by the first author in a conversation with Roya Beheshti, following her talk at the NSF-funded \emph{Workshop on Tevelev Degrees and Related Topics} at the University of Illinois Urbana-Champaign. In her presentation, Roya explained a negative result for the analogous enumerative problem in the algebraic setting. We are grateful to the workshop organizers, Felix Janda and Deniz Genlik, and to the speakers for fostering a stimulating research environment that led to this project.

The first version of this paper did not discuss earlier work of Ruan--Tian and McDuff--Salamon on the fixed-domain invariants, and we are grateful to Spencer Catallani, Kai Cieliebak, and Dominic Joyce for bringing it to our attention. We thank also Roya Beheshti, Carl Lian, Miguel Moreira, Rahul Pandharipande, John Pardon, Dhruv Ranganathan, and Aleksey Zinger.

The first author is supported by the SNF grant P500PT-222363. The second author is supported by Trinity College, Cambridge, an he thanks Chiu-Chu Melissa Liu and Francesco Lin for hosting him at Columbia University, where part of this work was carried out.

\section{Refined stratification} 
\label{sec: stratification}

The proof of \autoref{thm: main} involves decomposing $\M_{\mathsf{\Gamma}}(X, A, \cJ)$ into strata whose codimension in $\cJ$ is controlled by a rather subtle induction with respect to the complexity of $\G$. For the inductive argument it is important to consider only graphs of a particular shape and to augment them with certain additional discrete data corresponding to the stratification of $\M_{\mathsf{\Gamma}}(X, A, \cJ)$.

\subsection{Augmented graphs}\label{sec: graphs}

We start recalling the standard definition of prestable graphs. 

\begin{definition}\label{def: stable graph}
An $n$-marked genus $g$ \textbf{prestable graph} $\mathsf{\Gamma}$ is a tuple
\[
\mathsf{\Gamma} = (V, H, \bg, v, \iota, \bp) 
\]
where
\begin{enumerate}[(i)]
    \item $V = V(\G)$ is a finite set of vertices, with a function $\bg \colon V \to \N_0$ assigning a genus to each vertex. We will write $g_\alpha$ in place of $\bg(\alpha)$; 
    \item  $H = H(\G)$ is a finite set of half-edges, with a map $v \colon H \to V$ assigning an incident vertex to each half-edge;
    \item  $\iota \colon H \to H$ is an involution  whose fixed points 
    \[
        P = P(\G) = \{ h \in H \ | \ \iota(h) = h\}
    \]
    form the set of markings and pairs 
    \[
        E = E(\G) = \{ \{ h, h' \} \subset H \ | \ h' = \iota(h), \ h\neq h' \}
    \]
    form the set of unoriented edges of $\G$;
    \item  $\bp : \{1, \ldots, n\} \to P$ is a bijection defining the marking. We will write $p_i$ in place of $\bp(i)$. 
    
\end{enumerate}
We will always assume that $\G$ is connected. The \textbf{genus} of $\G$ is 
\[
    g(\G) = \sum_{\alpha \in V} g_\alpha +h_1(\G).
\]
Finally, $\G$ is said to be \textbf{stable} if for all $\alpha \in V$ it satisfies
\[
    2g_\alpha - 2 + \mathrm{val}(\alpha) > 0,
\]
where the \textbf{valence} of $\alpha$ is defined by $\mathrm{val}(\alpha) = |\{ h \in H : v(h) = \alpha \}|$.

\end{definition}

\begin{example}
    The dual graph of a stable $n$-marked genus $g$ curve $(C,\bp)$ representing a point in the Deligne--Mumford space $\cM_{g,n}$ is a stable $n$-marked genus $g$ graph.    (By a slight abuse of notation, we will use the notation $\bp$ and $p_i$ to denote both the marking of the graph $\G$ and the corresponding marked points on the curve $C$ modelled on $\G$.)
\end{example}

We are interested in the stratification of the moduli space $\M_{\mathsf{\Gamma}}(X, A, \cJ)$ of maps $u \colon C \to X$ modelled on $\G$. Therefore, it will be important to keep track of additional data encoding the stratum containing $u$, such as the degree of each map $u_\alpha$ on its image, the homology class $[u_\alpha]$, as well as certain auxiliary weights $m_\alpha$. To this end, we enrich $\G$ with such additional data.

\begin{definition}\label{def: augmented graphs}
    An $(n+\ell)$-marked genus $g$ \textbf{augmented graph} is a tuple
    $$
    \widetilde{\mathsf{\Gamma}} = (\mathsf{\Gamma}, \underline{\mathsf{m}}, \underline{\mathsf{d}}, \underline{\mathsf{A}}, h).
    $$
    where
\begin{enumerate}[(i)]
    \item $\mathsf{\Gamma}$ is an $(n+\ell)$-marked genus $g$ prestable graph with an order on the set of vertices; the smallest vertex will be denoted by $0 \in V(\G)$; 
    \item The $(n+\ell)$-marking consists of an $n$-marking $\bp = (p_1,\ldots,p_n)$ and an $\ell$-marking $\bp' = (p_1', \ldots, p_\ell')$ where $0 \leq \ell \leq n$;
    \item $\underline{\mathsf{m}}$ and $\underline{\mathsf{d}}$ are vectors of weights $m_\alpha \in \N_0$ and degrees $d_\alpha \in \N_0$, indexed by $\alpha \in V(\G)$;
    \item $\underline{\mathsf{A}}$ is a vector of homology classes $A_\alpha \in H_2(X, \mathbb{Z})$ indexed by $\alpha \in V(\G)$ such that either $A_\alpha$ is positive or $A_\alpha = 0$, with the latter if and only if $d_\alpha = 0$ if and only if $ m_\alpha = 0$;
    \item $h$ is a function $h: \{ (\alpha, \alpha') \in V(\mathsf{\Gamma})^{\times 2} \mid \alpha \neq \alpha', \ d_\alpha, d_{\alpha'} > 0 \} \to \{ 0, 1 \}$, which will encode whether two components of the domain are mapped to the same image.
\end{enumerate}
We will call $\G$ the \textbf{underlying graph} of $\cG$ and $(\bm,\bd,\bA,h)$ the \textbf{augmentation data} of $\cG$.  The \textbf{homology class} of $\cG$ is 
\[
    [\cG] = \sum_{\alpha\in V} d_\alpha A_\alpha
\]
and the \textbf{weighted homology class} of $\cG$ is
\[
    [\cG,\bm] = \sum_{\alpha\in V} m_\alpha d_\alpha A_\alpha.
\]
\end{definition}

\begin{remark}
    When the number of marked points and the genus are clear from context, we will simply refer to $\cG$ as an augmented graph.
\end{remark}

In addition, the following properties of augmented graphs will be crucial in the proof of \autoref{thm: main}. From a geometric point of view, we are only interested in stable maps in the general fiber of \eqref{eqn: tau map} and the maps obtained by their simplification. This imposes additional constraints on the graph and augmentation data. 

\begin{definition}\label{def: condition *}
    We say that $\cG$ satisfies the \textbf{fixed-domain constraint} if:
    \begin{enumerate}[(i)]
        \item The underlying graph $\G$ has the following shape. The smallest vertex $0 \in V(\G)$ has genus $g$ while all other vertices have genus $0$. In particular, $g(\G) = g + h_1(\mathsf{\Gamma})$.  
        \item  The markings $\bp$ and $\bp'$ are constrained as follows. Vertex $0$ carries the last $n - \ell$ markings $p_{\ell+1},\ldots, p_n$ and is connected via a single edge to $\ell$ vertices of $\alpha_1,\ldots,\alpha_\ell$ of genus $0$, each carrying the corresponding marking $p_1, \ldots, p_\ell$ and satisfying $\mathrm{val}(\alpha_i) = 3$.  The remaining markings in $\bp'$ are carried by vertices other than $0$ and $\alpha_1,\ldots,\alpha_\ell$; 
        \item Every loop of vertices in $\G$ contains a vertex $\alpha$ with $d_\alpha > 0$;
        \item The number of edges connecting each connected component of $\G \smallsetminus \{ 0 \}$ to $0$ is equal to the number of markings in $\bp'$ on that component. In particular, $|E(\G)| \geq 2\ell$. 
    \end{enumerate}
\end{definition}

With two notable exceptions that will be explained later, all augmented graphs appearing in this paper will satisfy the fixed-domain constraint.

\subsection{Stable maps modelled on augmented graphs}

Recall that an \textbf{$n$-marked stable map} consists of a nodal curve $C$, an ordered set $\bp$ of $n$ distinct points in the smooth locus of $C$, and a pseudo-holomorphic map $u \colon C \to X$ such that the automorphism group of the triple $(C,\bp, u)$ is finite. 

\begin{definition}\label{def: stabilization morphism}
    The stabilized domain curve associated to the triple $(C, \bp, u)$ is the $n$-marked stable curve $(\bar C, \bar \bp)$, constructed as follows. The curve $\bar C$ is obtained by contracting each genus-zero component of $C$ that carries fewer than three special points (i.e., nodes or markings). If such a contracted component contains a marking $p_i$, the point to which it is contracted becomes the new marking $\bar p_i$. If $p_i$ lies on a component that is not contracted, then $\bar p_i = p_i$. For the definition of stabilization in families, see \cite[Chapter 10, Section 8]{ACGH}.
\end{definition}

The map $\pi: \cM_{g,n}(X,A) \to \cM_{g,n}$ in Equation \eqref{eqn: map tau} sends a point $[u,C,\bp]$ to the stabilized domain curve $[\bar{C},\bar{\bp}]$.

\begin{notation}
    Let $A \in H_2(X,\Z)$ be a positive class.
    Let $\G$ be an $n$-pointed genus $g$ graph. We denote by
    \[
    \mathcal{M}_\mathsf{\Gamma}(X, A, \cJ)
    \]
    the universal moduli space of marked stable maps modelled on $\G$, that is the space parametrizing isomorphism classes of data $(C,\bp, u,J)$ where
    \begin{enumerate}[(i)]
        \item $J \in \cJ$, 
        \item $C$ is a nodal curve and $\bp = (p_1,\ldots, p_n)$ is an ordered set of smooth points in $C$ such that the marked nodal curve $(C,\bp)$ is modelled on $\G$; that is:  $\G$ is the dual graph of $(C,\bp)$.
        \item $u \colon (C,\bp) \to X$ is a $J$-holomorphic stable map such that $[u] \coloneq u_*[C] = A$. 
    \end{enumerate}
    For every vertex $\alpha$ of $\G$ denote by $C_\alpha$ the corresponding irreducible component of $C$ and by $u_\alpha = u|_{C_\alpha}$ the restriction of $u$ to $C_\alpha$. 
\end{notation}

\begin{example}
    If $\G = T_{g,n}$ consists of a single vertex of genus $g$ with $n$ markings, then
    \[
        \mathcal{M}_\mathsf{\Gamma}(X, A, \cJ) = \mathcal{M}_{g,n}(X, A, \cJ)
    \]
    is the universal moduli space of pseudo-holomorphic maps to $X$ from genus $g$ smooth curves with $n$ marked points in class $A$. The corresponding Gromov--Kontsevich universal moduli space of stable maps is
    \[
        \overline\M_{g,n}(X, A, \cJ) = \bigcup_\G \mathcal{M}_\G (X, A, \cJ)
    \]
    where the union is taken over all $n$-marked genus $g$ graphs $\G$.
\end{example}

The next is a higher genus generalization of \cite[Definition~6.1.1]{mcduff}, and will be a central notion in what follows.

\begin{definition}
    \label{def: simple}
    A stable map $(C,\bp, u)$ is called \textbf{simple} if it satisfies the following conditions:
    \begin{enumerate}[(i)]
        \item for every connected (possibly reducible) subcurve $C' \subset C$ of positive arithmetic genus, the restriction $u|_{C'}$ is nonconstant;
        \item each nonconstant component $u_\alpha$ is not multiply covered;
        \item if $u_\alpha$ is nonconstant and $\alpha \neq \beta$, then $\im(u_\alpha) \neq \im(u_\beta)$. 
    \end{enumerate}
    We denote by 
    $$
    \M_\mathsf{\Gamma}^*(X,A,\cJ) \subset \M_\mathsf{\Gamma}(X,A,\mathcal{J})
    $$ 
    the subset of simple maps.
\end{definition}

\begin{notation}
Recall that an ordered multiset of $n$ elements of a set $S$ is a sequence $(a_1,\ldots, a_n)$ where $a_i \in S$ and repetitions are allowed.  In this paper, we will use both ordered sets and ordered multisets of marked points, and the distinction will be important. 
\end{notation}

We will use the following generalization of this notion introduced in \cite[Section 6.1]{mcduff}. 

\begin{definition}
    \label{def: weighted prestable map}
    A \textbf{weighted $n$-marked prestable map} $(C, \bp, u, \bm)$ consists of
    \begin{enumerate}[(i)]
        \item a curve $C$ (not necessarily nodal),
        \item an ordered multiset of $n$ points in $C$ (not necessarily contained in the smooth locus),
        \item a vector $\bm$ of weights $m_\alpha \in \N_0$ indexed by the irreducible components $C_\alpha$ of $C$,
        \item a pseudo-holomorphic map $u \colon C \to X$,
    \end{enumerate}
    with the property that $(C,\bp,u)$ is obtained from an $n$-marked stable map $(\tilde C, \tilde\bp, \tilde u)$ by identifying, finitely many times, some number of $\P^1$ components which have the same image under $\tilde u$. Each such collection is identified to a single point in the resulting curve.

    A weighted $n$-marked prestable map $(C,\bp,u,\bm)$ is said to be \textbf{stable} if the underlying $n$-marked map $(C,\bp,u)$ is stable. Similarly, $(C, \bp, u, \bm)$ is called \textbf{simple} if $(C, u)$ is simple.

    Associated with such a map is the usual homology class
    \[
        [u] = \sum_\alpha [u_\alpha]
    \]
    where $u_\alpha = u|_{C_\alpha}$, as well as the \textbf{weighted homology class} is defined by
    \[
        [u,\bm] =  \sum_\alpha m_\alpha [u_\alpha].
    \]
\end{definition}

\begin{notation}
    When the number of marked points is clear from context, we will simply refer to $(C,\bp,u,\bm)$ as a weighted prestable (or stable) map. 
\end{notation}

\begin{remark}
    It follows from the definition that the underlying curve $C$ of a weighted prestable map has ordinary singularities. In particular, as a topological space, $C$ is the quotient of a smooth curve, its normalization, by an equivalence relation that identifies finitely many finite sets of points, each to a single point.
\end{remark}

\begin{definition}  
    \label{def: augmented maps}
    Let $\cG$ be an augmented graph as in \autoref{def: augmented graphs}. 
    A weighted $(n+\ell)$-marked stable map $(C,\bp\cup\bp', u,\bm)$ is \textbf{modelled on $\cG$} if 
    \begin{enumerate}[(i)]
        \item $\G$ is the dual graph of $(C,\bp \cup \bp')$; the component $C_0$ corresponding to the smallest vertex $0 \in V(\G)$ will be called the \textbf{main component}; 
        \item $\bm$ agrees with the vector of weights of $\cG$;
        \item $u_\alpha$ is constant if and only if $d_\alpha = 0$ if and only if $m_\alpha=0$, and otherwise $u_\alpha$ is a $d_\alpha$-fold cover of a simple map representing homology class $A_\alpha$; in particular:  $[u_\alpha] = d_\alpha A_\alpha$;
        \item $\im(u_\alpha) = \im(u_\beta)$ if and only if $h(\alpha,\beta) = 1$.
    \end{enumerate}
    Denote by 
    \[
        \M_{\cG}(X,\cJ)
    \]
    the universal moduli space of weighted $(n+\ell)$-marked stable maps modelled on $\cG$.
\end{definition}

\begin{remark}
    The order on the vertices of $\G$ does not play any role in the definition of $\M_{\widetilde{\mathsf{\Gamma}}}( X, \mathcal{J})$, but they will play a role in our induction later.
\end{remark}

\subsection{Generalization of the main theorem}\label{sec: generalization of the main theorem}

Let $\cG$ be an augmented graph as in \autoref{def: augmented graphs} which satisfies the fixed-domain constraint from \autoref{def: condition *} and with weighted homology class satisfying

\begin{equation}\label{eqn: num constraint inequality}
    c_1([\cG,\bm])= r(n+g-1)-k
\end{equation}
where we assume $k \in \N_0$ to satisfy
\begin{equation}
    \label{eqn: bound on k} 
    k \leq  \frac{r-1-\rho}{5} n \quad\text{for some } \rho >0.
\end{equation}
The number $2k$ is the virtual relative dimension of the map \eqref{eqn: tau map}. 

Let $\M_{\cG}(X,\cJ)$ be the universal moduli space introduced in \autoref{def: augmented maps} and consider the map
\begin{equation}\label{eqn: map tau tilde}
\tau_{\widetilde{\mathsf{\Gamma}}}= \pi_{\cG} \times \mathrm{ev}_{\cG}: \M_{\widetilde{\mathsf{\Gamma}}}( X, \mathcal{J})  \to \cM_{g,n} \times X^n \times \cJ 
\end{equation}
defined as follows:
\begin{enumerate}[(i)]
    \item the map $\pi_{\cG}$ on the first factor records the smooth genus $g$ curve corresponding to the vertex $0$ along with the markings $\bp = (p_1,\ldots,p_n)$. Although, for $i \leq \ell$ the marking $p_i$ does not lie in $C_0$, since the graph $\cG$ satisfies the fixed-domain constraint, $p_i$ lies in $ C_{\alpha_i}$ an the map $\tau_{\cG}$ is recalling the intersection point $C_0 \cap C_{\alpha_i}$ as $i$-th marking on $C_0$. Note that this procedure differs from the stabilization process of \autoref{def: stabilization morphism},
    \item on the second factor is the evaluation map at $(p_1',\ldots,p_{\ell}', p_{\ell+1},\ldots, p_n)$,
    \item on the third factor is the projection on the space of almost complex structures on $X$. The map $\mathrm{ev}_{\cG}$ denotes the projection on the second and third factors.
\end{enumerate}

The remaining sections of this paper are devoted to the proof of the following theorem and how it implies \autoref{thm: main}.

\begin{theorem}\label{thm: main II}
    There exists a constant $c = c(r,g)$ with the following property. Let $(X,\omega)$ be a compact positive symplectic manifold of dimension $2r \geq 6$ and let $\cG$ be an augmented graph as above satisfying \eqref{eqn: num constraint inequality} and \eqref{eqn: bound on k} . If $c_1([\cG,\bm]) \geq c$, then the image of the map $\tau_{\widetilde{\mathsf{\Gamma}}}$ in \eqref{eqn: map tau tilde} has codimension $\geq 2k+2$, unless $\G= T_{g,n}$ and $m_{0}=d_{0}=1$.
\end{theorem}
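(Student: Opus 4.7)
The plan is to argue by induction on the complexity of the augmented graph $\cG$, using the simplification process of Section 3 to pass to a simpler augmented graph $\cG^s$ whose weighted homology class $[\cG^s,\bm^s]$ equals $[\cG,\bm]$. The base case consists of those $\cG$ for which stable maps modelled on $\cG$ are generically simple, so that by the transversality theorem of Section 8 the space $\M_{\cG}^*(X,\cJ)$ is a Banach manifold of expected dimension. In this case the codimension of the image of $\tau_{\cG}$ is controlled by a direct dimension count using \eqref{eqn: num constraint inequality} and the fixed-domain constraint: one gets $2\ell$ extra node conditions from item (iv) of \autoref{def: condition *}, and these, combined with the positivity hypothesis $c_1(X)\cdot A_\alpha>0$ on every non-trivial component, force the virtual codimension in $\cM_{g,n}\times X^n \times\cJ$ to be at least $2k+2$, unless $\G=T_{g,n}$ with $m_0=d_0=1$, where the dimension count is exactly $2k$.

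For the inductive step I would decompose $\M_{\cG}(X,\cJ)=\M_{\cG}^*(X,\cJ)\sqcup \M_{\cG}^{ns}(X,\cJ)$; the simple stratum is handled by the base case argument applied to $\cG$ itself. For the non-simple stratum one applies the simplification of Section 3 to a generic $u\in\M_{\cG}^{ns}(X,\cJ)$, producing a simple map $u^s$ modelled on an augmented graph $\cG^s$. In the generic situation $\cG^s$ is strictly smaller than $\cG$ (fewer vertices, or smaller total $\sum_\alpha d_\alpha$), still satisfies the fixed-domain constraint, and has $[\cG^s,\bm^s]=[\cG,\bm]$, so the inductive hypothesis applied to $\cG^s$ gives the codimension bound on $\tau_{\cG^s}$. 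One then needs to transfer this bound back to $\tau_{\cG}$, by showing that the fibers of the forgetful map $\M_{\cG}(X,\cJ)\to \M_{\cG^s}(X,\cJ)$ only add controlled dimensions that are offset by the extra incidence constraints (nodes and markings mapped to common points) recorded by the augmentation data.

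The principal obstacle will be the three exceptional situations where the simplification either fails to reduce to a smaller augmented graph satisfying the fixed-domain constraint, or does not strictly decrease complexity: (a) the main component $u_0$ is multiply covered, i.e.\ $d_0>1$; (b) $u_0$ is constant, i.e.\ $d_0=0$; and (c) $u_0$ shares its image with some other component, i.e.\ $h(0,\alpha)=1$ for some $\alpha$. These correspond exactly to the ``notable exceptions'' hinted at in \autoref{def: condition *} and must be treated by hand. Case (a) is attacked by parametrizing maps in $\M_{\cG}(X,\cJ)$ as $d_0$-fold covers of simple maps and showing that the cover moduli contributes too little to the image of $\ev_{\cG}$, because the positivity bound $c_1(X)\cdot A_0 \geq c/d_0$ together with $2r\geq 6$ makes the evaluation of a single covering direction Fredholm negative. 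Case (b) uses the fixed-domain constraint (iii) to force at least one non-constant component in every loop and hence enough markings $\bp'$ to propagate the point constraints off $C_0$; the codimension loss from $u_0$ being constant is recovered from the node and marking incidences inherited by the adjacent non-constant components. Case (c) couples $\ev$ at different components and one shows that the resulting coincidence conditions provide $2r$ further constraints, absorbing the defect.

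Finally, the uniform constant $c(r,g)$ arises by collecting the finitely many thresholds needed in each of cases (a)--(c) so that the gain from positivity in $\langle c_1(X),A_\alpha\rangle$ dominates the bounded combinatorial complexity of $\G$ allowed by \eqref{eqn: bound on k}; passage from \autoref{thm: main II} to \autoref{thm: main} is then the cleanup carried out in \autoref{subsec: thm II to I}, translating augmented-graph codimensions into prestable-graph codimensions by summing over the finitely many augmentations compatible with a given $\G$.
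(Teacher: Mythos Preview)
Your overall architecture is right: induction via the simplification of Section~3, with the exceptional cases being exactly those in which the main component is involved. But two points deserve correction, and one of them is a genuine gap.

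First, the decomposition $\M_{\cG}(X,\cJ)=\M_{\cG}^*(X,\cJ)\sqcup \M_{\cG}^{ns}(X,\cJ)$ is a misreading of the setup. The augmentation data $(\bd,h)$ already fix, for every vertex, the covering degree and the pattern of coinciding images; hence either every map modelled on $\cG$ is simple or none is. Relatedly, in the inductive step there is no ``transfer back'' to do: when the simplification is applied at a vertex $\alpha\neq 0$, Proposition~3.4 gives $\tau_{\cG}(u)=\tau_{\cG^s}(u^s)$ on the nose, so $\im(\tau_{\cG})\subset\im(\tau_{\cG^s})$ and the codimension bound passes directly. The fiber analysis you describe is unnecessary here (and would in fact be hard, since those fibers are Hurwitz-type spaces whose dimension is not obviously small).

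The substantive gap is your treatment of case~(a), $d_0>1$. Your sketch asserts that ``the positivity bound $c_1(X)\cdot A_0 \geq c/d_0$ \ldots\ makes the evaluation of a single covering direction Fredholm negative.'' This is not true without further work. After replacing $u_0$ by its underlying simple map, the index of $\M_{\cG^s}(X,\cJ)\to X^n\times\cJ$ is bounded by an expression of the form
\[
c_1([\cG,\bm]) - (m_0 d_0-1)c_1(A_0) + (r-3) + (n+\ell) - |E(\G^s)| - nr,
\]
and this is \emph{not} automatically $<-k$: when $\ell$ is close to $n$ (most markings on bubbles) and $d_0$ is bounded, the term $(n+\ell)$ can dominate. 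Conversely, the Hurwitz-space parametrization of $d_0$-fold covers gives a second estimate, roughly $-(d_0-1)c_1(A_0)-\ell-k+4d_0$, which fails in the complementary regime where $\ell$ is small. The paper's proof (Proposition~4.1) interpolates between these two estimates by fixing parameters $\varepsilon,\delta\in(0,1)$ with $\varepsilon+\delta>1$ and $4\delta<\varepsilon$, and splitting into the cases $\ell\leq\varepsilon n$, $\{\ell\geq\varepsilon n,\ d_0\geq\delta n\}$, $\{\ell\geq\varepsilon n,\ d_0\leq\delta n\}$; in each regime exactly one of the two estimates wins. The Hurwitz-space input (Lemma~4.2) is not just ``cover moduli contributes too little'': one needs the rank estimate $\dim\cH-\mathrm{rank}(d\gamma)\lesssim 4d_0$ for the forgetful map $\cH\to\M_{g^s,m}$, obtained via Riemann--Hurwitz, and then a fiber-product index computation. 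Without this two-estimate interpolation your argument does not close, and there is no single inequality that handles all values of $(\ell,d_0)$ simultaneously.

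Your sketch for case~(b) is also too vague: the actual mechanism is the edge count $|E(\G^s)|\geq n+2\ell-3$ after contracting $C_0$ (Remark~3.5), combined with the observation that if too few markings lie on bubbles then $\mathrm{ev}_{\cG}$ already factors through a small diagonal in $X^n$. Condition~(iii) of the fixed-domain constraint plays no role here.
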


Denoting by
$
\M_{\cG}(C, \bp ;X,\cJ)
$
the fiber the projection $\pi_{\cG}: \M_{\cG}(X,\cJ) \to \cM_{g,n}$ over $(C, \bp)$, we obtain the following fixed-domain reformulation of the previous result.

\begin{corollary}\label{cor: main II}
    In the situation described in \autoref{thm: main II}, for a generic $(C,\bp) \in \cM_{g,n}$, the image of the map
    $$
    \mathrm{ev}_{\widetilde{\mathsf{\Gamma}}}: \M_{\cG}(C, \bp ;X,\cJ) \to X^n \times \cJ
    $$
    has codimension $2k+2$ unless $\G= T_{g,n}$ and  $m_{0}=d_{0}=1$.
\end{corollary}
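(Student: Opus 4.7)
The plan is to derive the corollary from \autoref{thm: main II} by a standard generic-slicing argument over the finite-dimensional base $\cM_{g,n}$. \autoref{thm: main II} supplies a second countable Banach manifold $E$ and a Fredholm map $f \colon E \to \cM_{g,n} \times X^n \times \cJ$ of index at most $-2k-2$ whose image contains $\tau_{\cG}(\M_{\cG}(X,\cJ))$. Since $\mathrm{ev}_{\cG}(\M_{\cG}(C,\bp;X,\cJ))$ sits inside the slice $f(E) \cap (\{(C,\bp)\} \times X^n \times \cJ)$, it suffices to exhibit this slice as a set of codimension at least $2k+2$ in $X^n \times \cJ$ for a generic $(C,\bp) \in \cM_{g,n}$.

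First I would compose $f$ with the projection $\pi_1$ to $\cM_{g,n}$ and apply a Sard--Smale-type argument to $\pi_1 \circ f \colon E \to \cM_{g,n}$. Because the target is finite-dimensional, one obtains a residual set $R \subset \cM_{g,n}$ of regular values; for each $(C,\bp) \in R$, the fiber $E_{(C,\bp)} := (\pi_1 \circ f)^{-1}(C,\bp)$ is either empty (and the conclusion is vacuous) or a Banach submanifold of $E$ of codimension $6g-6+2n$. Next I would restrict $f$ to this fiber, producing a map $f|_{E_{(C,\bp)}} \colon E_{(C,\bp)} \to X^n \times \cJ$. At a regular value of $\pi_1 \circ f$ the differential in the $\cM_{g,n}$-direction is surjective, which forces $\ker df$ to lie already in $TE_{(C,\bp)}$ and $\coker(df)$ to agree with the cokernel of the restriction. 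Consequently $f|_{E_{(C,\bp)}}$ is Fredholm of the same index as $f$, at most $-2k-2$, and its image therefore has codimension at least $2k+2$ in $X^n \times \cJ$. The exceptional case $\G = T_{g,n}$ with $m_0 = d_0 = 1$ is inherited directly from \autoref{thm: main II}.

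The main technical subtlety I foresee is justifying the Sard--Smale step: $\pi_1 \circ f$ is not itself Fredholm in general, since its differential may well have infinite-dimensional kernel. This is handled by exploiting the finite-dimensionality of $\cM_{g,n}$: locally, the Fredholm normal form for $f$ reduces the regularity question to the classical Sard theorem for a smooth map between finite-dimensional spaces, and the local pieces are then patched together using second countability of $E$. Since precisely the same maneuver is used in \autoref{subsec: thm II to I} to derive \autoref{thm: main} from \autoref{thm: main II}, no new ideas beyond what the paper already deploys are required here.
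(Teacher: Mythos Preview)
Your proposal is correct and follows essentially the same approach as the paper: take the Fredholm map $f$ furnished by \autoref{thm: main II}, project to $\cM_{g,n}$, pick a regular value $(C,\bp)$, and observe that the restriction of $f$ to the fiber over $(C,\bp)$ is Fredholm of the same index. The paper's proof is terser and simply invokes ``the Sard--Smale theorem'' without flagging the subtlety you raise about $\pi_1\circ f$ failing to be Fredholm; your remark that finite-dimensionality of the target reduces this to the classical Sard theorem via local normal form is a useful clarification rather than a deviation.
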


\begin{proof}
    Let $f: B \to \M_{g,n} \times X^n \times \cJ$ is a Fredholm map of index $d \leq -2k-2$ whose image contains the image of $\tau_{\cG}$. Let $(C,\bp) \in \M_{g,n} \subset \cM_{g,n}$ be the regular value of the composition of $f$ with the projection to $\cM_{g,n}$, so that the fiber $B(C,\bp)$ over $(C,\bp)$ is a Banach manifold. By the Sard--Smale theorem, the set of such regular values is residual in $\M_{g,n}$. Given such $(C,\bp)$, the composition
    \[
    B(C,\bp) \hookrightarrow B \xrightarrow{f} X^n \times \cJ
    \]
    is a Fredholm map of index $d$ whose image contains $\mathrm{ev}_{\cG}(\M_{\cG}(C, \bp; X, \cJ))$.
\end{proof}

\autoref{thm: main II} is significantly more general than \autoref{thm: main}, whose proof follows from the former and is discussed in \autoref{subsec: thm II to I}. The proof of \autoref{thm: main} relies on a special class of augmented graphs and makes limited use of the augmentation data. The reason for considering more complicated graphs in \autoref{thm: main II}, rather than only those appearing in the proof of \autoref{thm: main}, is that the latter do not form a class closed under the simplification process described in the next section. This process makes use of the full augmentation data and is essential to reduce, by induction, the problem to one concerning simple maps, to which we can then apply the transversality statement for moduli spaces of simple maps discussed in \autoref{sec: transversality}.  In other words, the definitions in this section and statement of \autoref{thm: main II} are carefully designed to ensure that the induction stays within the same class of graphs equipped with additional combinatorial data.

\section{Simplification process}\label{sec: simplification process}

This section discusses how every weighted stable map modelled on an augmented graph covers a simpler stable map with the same image and weighted curve class. This construction is inspired by the construction of a simple map underlying a stable genus $0$ map from \cite[Section~6.1]{mcduff}. However, the map obtained by our construction carries significantly more information having to do with the specific shape of graphs considered in \autoref{def: augmented graphs} as well as the augmentation data. In particular, the genus of the new map does not necessarily agree with that of the original map.

\begin{definition}
    Let $(C,\bp)$ be the marked domain of a weighted prestable map as in \autoref{def: weighted prestable map}. In particular,  we do not assume that $C$ is nodal or that the points $\bp$ are distinct or belong to the smooth locus of $C$.
    
    A point $p \in C$ is called a \textbf{destabilizing point for $(C, \bp)$} if one of the following is true:
    \begin{enumerate}[(i)]
        \item $p$ belongs to at least three irreducible components of $C$;
        \item $p$ belongs to $\bp$ and two irreducible components of $C$,
        \item $p$ appears at least twice in $\bp$.
    \end{enumerate}
\end{definition}
 
\begin{definition}\label{def: stabilization process}
    Let $(C,\bp,u,\bm)$ be a weighted prestable map as in \autoref{def: weighted prestable map}. Let $p \in C$ be a destabilizing point for $(C,\bp)$. A \textbf{stabilization at $p$} is the result $(C^{\mathrm{stab}}, \bp^{\mathrm{stab}}, u^{\mathrm{stab}}, \bm^{\mathrm{stab}})$ of a process of replacing $p$ with a new irreducible contracted component $\P^1$, which we now describe.   
    
    Let $\{ C_i^\circ \}$ be the connected components of $C \smallsetminus \{ p \}$, and let $C_i$ be their closures in $C$. Denote by $q_i \in C_i$ the point corresponding to $p$ and by $\bq$ the sequence of these points. Let $\bp^0 \subset \bp$ be the subsequence consisting of all appearances of $p$ in $\bp$.  Choose two distinct points $a,b$ in $\bq \cup \bp^0$. Define a new curve $C^{\mathrm{stab}}$ by attaching each $C_i$ to $\P^1$, with respect to the following identification
    \[
        q_i \sim 
        \begin{cases}
            0 \in \P^1 & \text{if } q_i = a, \\
            1 \in \P^1 & \text{if } q_i = b, \\
            \infty \in \P^1 & \text{otherwise}.
        \end{cases}
    \]
    Similarly, define the ordered multiset $\bp^{\mathrm{stab}}$ in $C^{\mathrm{stab}}$ to be image of $\bp$ under the  map $\iota \colon \bp \to C^{\mathrm{stab}}$ given by
    \[
        \iota(r) = 
        \begin{cases}
            r \in  \bigcup_i C_i^\circ & \text{if } r \in \bp\setminus \bp^0 \\
            0 \in \P^1 & \text{if } r = a, \\
            1 \in \P^1 & \text{if } r = b, \\
            \infty \in \P^1 & \text{otherwise}.
        \end{cases}
    \]
    Irreducible components of $C^{\mathrm{stab}}$ are the irreducible components of $C$ and $\P^1$. We extend $\bm$ to $\bm^{\mathrm{stab}}$ by assigning weight zero to $\P^1$. Finally, the map $u$ extends to a map $u^{\mathrm{stab}}$ by constant $u(p)$ on $\P^1$. 
\end{definition} 

\begin{remark}
    \label{rem: stabilization process}
    The result of a stabilization is a weighted prestable map with fewer destabilizing points. Therefore, by applying this process repeatedly to a weighted prestable map we can construct a weighted stable map. 
\end{remark}

Let $\cG$ be an augmented graph as in \autoref{def: augmented graphs} and let 
\[
    (C,\bp\cup\bp',u,\bm)
\]
be a weighted marked stable map modelled on an augmented graph $\cG$ as in \autoref{def: augmented maps}. Suppose that $u$ is not simple. The construction described below will produce from this data a new augmented graph $\cG^s$ and a new, simpler weighted marked stable map modelled on $\cG^s$,
\[
    (C^s,\bp^s\cup (\bp')^s,u^s,\bm^s).
\]
Importantly, the corresponding homology classes will be related by
\[
    [\cG^s,\bm^s] = [\cG,\bm] \quad\text{and}\quad c_1([\cG^s]) \leq c_1([\cG]),
\]
which will allow us to carry out an induction process.

\begin{definition}\label{def: simplification process}
    A \textbf{partial simplification} of a weighted stable map $(C,\bp\cup\bp',u,\bm)$ is the weighted stable map $(C^s,\bp^s\cup (\bp')^s,u^s,\bm^s)$  obtained by one of the following procedures.

\begin{enumerate}[(a)]
     \item (\textbf{Connected cover}). In this construction, we replace a component $u_\alpha$ with $d_\alpha > 1$ by the underlying simple map. To be more precise, $u_\alpha$ factors as
         $$
         u_\alpha: C_\alpha \xrightarrow{\phi} C_\alpha^s \xrightarrow{u_\alpha^s} X
         $$
     where 
     \begin{enumerate}[(i)]
        \item $C_\alpha^s$ is a smooth complex curve of genus $g_\alpha^s \leq g_\alpha$, 
        \item $\phi$ has degree $d_\alpha$, 
        \item $u_\alpha^s$ is simple.
    \end{enumerate}
    Let $\{ C_i^\circ \}$ be the connected components of $C \smallsetminus C_\alpha$ and $C_i$ their closure in $C$. Let $q_{i,1},\ldots, q_{i,k_i} \in C_\alpha$ be the points of intersection $C_i \cap C_\alpha$. Set
    \begin{align*}
        I &= \{ i \in \{1,\ldots,n\} \ | \ p_i \in C_\alpha \}, \\
        I' &= \{ i \in \{1,\ldots,\ell\} \ | \ p_i' \in C_\alpha \}.
    \end{align*}
    (Note that if $\cG$ satisfies the fixed-domain constraint, then $I=\emptyset$ if $\alpha \neq 0$ and $I'=0$ if $\alpha=0$.) Define a weighted prestable map $(C^{\mathrm{pre}}, \bp^{\mathrm{pre}}\cup(\bp^{\mathrm{pre}})', u^{\mathrm{pre}}, \bm^{\mathrm{pre}})$  as follows:
    \begin{enumerate}[(i)]
        \item  $C^\mathrm{pre}$ is the curve obtained from $C$ by removing $C_\alpha$ and attaching $C_i$  to a new component $C_\alpha^s$ using the identification $q_{i,j} \sim \phi(q_{i,j})$;
        \item  $\bp^{\mathrm{pre}}$ and $(\bp^{\mathrm{pre}})'$ are ordered multisets of points in $C^{\mathrm{pre}}$ obtained from $\bp$ and $\bp'$ by replacing $\{ p_i \}_{i\in I}$ and $\{ p_i'\}_{i\in I'}$ by their images in $C_\alpha^s$ under $\phi$;
        \item $u^{\mathrm{pre}} \colon C^{\mathrm{pre}} \to X$ is obtained from $u$ by replacing $u_\alpha$ by $u_\alpha^s$ on the component $C_\alpha^s$; we equip it with degree $d_\alpha^s=1$ and weight $m_\alpha^s = d_\alpha m_\alpha$. 
    \end{enumerate}
    We then define $(C^s,\bp^s\cup (\bp')^s,u^s,\bm^s)$ as the result of the repeated stabilization process described in \autoref{def: stabilization process} and \autoref{rem: stabilization process}. The resulting weighted stable map is modelled on an augmented graph $\cG^s$ where the component $C_\alpha^s$ has genus $g_\alpha^s$ and the $h^s$ agrees with $h$ in $\cG$.
    
     \item  (\textbf{Disconnected cover.}) In this construction, we identify two simple components $u_\alpha$ and $u_\beta$ with the same image. To be precise, suppose that $h(\alpha, \beta) = 1$ and $d_\alpha = d_\beta = 1$. Since $\im(u_\alpha) = \im(u_\beta)$, there exists an isomorphism $\phi: C_\alpha \to C_{\beta}$ such that $u_{\alpha} = u_{\beta} \circ \phi$. By applying the procedure from case (a) to the disconnected double cover $\phi \sqcup \mathrm{id} \colon C_\alpha \sqcup C_\beta \to C_\beta$, define
     a new weighted prestable map $(C^{\mathrm{pre}}, \bp^{\mathrm{pre}}\cup (\bp^{\mathrm{pre}})', u^{\mathrm{pre}}, \bm^{\mathrm{pre}})$ with $C_\alpha\sqcup C_\beta$ collapsed to $C_\beta$. Equip the new component $u_\beta^s \colon C_\beta \to X$ with  weight $m_\beta^s = m_\alpha + m_\beta$. We then apply the repeated stabilization process to obtain a weighted stable map $(C^s,\bp^s\cup (\bp')^s,u^s,\bm^s)$ modelled on a graph $\cG^s$. The new function $h^s$ is defined on pairs where neither of the two vertices is $\alpha$, and on these pairs, it agrees with the previous $h$. 

    \item (\textbf{Contracted main component.}) In this construction, we assume that $d_0 = 0$, that is: $u$ is constant on the main component corresponding to $\alpha=0$, which we then collapse to a point. To be precise, in this case $u$ factors through a weighted prestable map $u^{\mathrm{pre}}: C^{\mathrm{pre}} \to X$ obtained from $C$ by collapsing $C_{0}$ to a point. Define $(C^s,\bp^s\cup (\bp')^s,u^s,\bm^s)$ to be the result of repeated stabilization applied to this weighted prestable map, with $\bm^s$ obtained from $\bm$ by declaring it  to be zero on the new genus $0$ components added in the stabilization process, and $h^s = h$ unchanged.
\end{enumerate}

    In each of these constructions, we will say that $C_\alpha$  (and $C_\beta$ in case (b)) is the \textbf{component involved in the simplification}. 

\end{definition}

\begin{remark}
    By repeatedly applying the partial simplification process described above to a weighted stable map with no connected contracted subcurve of positive arithmetic genus, one obtains a weighted simple stable map. 
\end{remark}

An important point to make is that the fixed-domain constraint from \autoref{def: condition *} is \emph{not} preserved by the partial the simplification process. However, as explained by the next result, this issue occurs only if the partial simplification involves the main component.

\begin{proposition}
    \label{prop: simplification}
    Let $(C,\bp\cup\bp',\bm)$ be a weighted stable map modelled on an augmented graph $\cG$ satisfying  the fixed-domain constraint. Let $(C^s,\bp^s\cup (\bp')^s,u^s,\bm^s)$ be the result of the partial simplification process described in \autoref{def: simplification process}, with the corresponding augmented graph $\cG^s$. The partial simplification satisfies
    \[
        [\cG^s,\bm^s] = [\cG,\bm] \quad\text{and}\quad c_1([\cG^s]) \leq c_1([\cG])
    \]
    Moreover, $\cG^s$ satisfies the fixed-domain constraint and
    \[
        \tau_{\cG}((C,\bp\cup\bp',\bm)) = \tau_{\cG^s}((C^s,\bp^s\cup (\bp')^s,u^s,\bm^s)),
    \]
    unless the partial simplification involves the main component $C_0$ of $C$. 
\end{proposition}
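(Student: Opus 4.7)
The plan is to verify the four conclusions---invariance of $[\cG,\bm]$, the monotonicity $c_1([\cG^s]) \leq c_1([\cG])$, preservation of the fixed-domain constraint, and invariance of $\tau_\cG$---case by case across the three partial simplifications of \autoref{def: simplification process}. The first two are essentially algebraic bookkeeping. For (a) the contribution of the affected vertex to $[\cG,\bm]$ passes from $m_\alpha d_\alpha A_\alpha$ to $(d_\alpha m_\alpha)(1)A_\alpha$; for (b), $h(\alpha,\beta)=1$ together with $d_\alpha = d_\beta = 1$ forces $A_\alpha=A_\beta$, so $m_\alpha A_\alpha + m_\beta A_\beta = (m_\alpha+m_\beta)A_\beta = m_\beta^s A_\beta^s$; for (c), the collapsed main component has $m_0=d_0=0$ and contributes nothing. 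The $\P^1$-bubbles added during the subsequent stabilization step carry weight zero by definition, and so are invisible to $[\cG,\bm]$. The difference $[\cG]-[\cG^s]$ equals $(d_\alpha-1)A_\alpha$, $A_\beta$, or $0$ respectively; each is either zero or positive, hence pairs nonnegatively with $c_1$ by positivity of $X$.

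Assuming the simplification does not involve $C_0$, I next check the four parts of \autoref{def: condition *} for $\cG^s$. Condition (i) holds because $C_0$ is untouched, each $C_\alpha^s$ arising from (a) has genus at most that of $C_\alpha=\P^1$ by Riemann--Hurwitz, and stabilization inserts only $\P^1$'s. Condition (ii) is the most delicate: when $\alpha=\alpha_i$ for some $i \leq \ell$, the images under $\phi$ of the three special points of $C_{\alpha_i}$ may collide on $C_{\alpha_i}^s$; if they do, the stabilization procedure inserts a rational component which carries $p_i$ and joins $C_0$ to $C_{\alpha_i}^s$, and that new component plays the role of $\alpha_i$ in $\G^s$ (three-valent, joined to $0$ by a single edge, and carrying $p_i$). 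Condition (iii) is preserved because any newly created loop traverses the simplified vertex, which has $d>0$ by construction. Condition (iv) follows since the edges meeting $0$ and the $\bp'$-markings move together with the components that contain them, so the balance stipulated by (iv) is a conserved quantity under (a) and (b) when $C_0$ is not involved.

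Finally, $\tau_\cG$ factors through the marked curve $(C_0, p_{\ell+1}, \ldots, p_n, C_0 \cap C_{\alpha_i})\in\cM_{g,n}$, the evaluation at $(p_1',\ldots,p_\ell',p_{\ell+1},\ldots,p_n)$, and the projection to $\cJ$. The first is unchanged because $C_0$ is untouched; the intersection points with the $C_{\alpha_i}$'s yield the same stabilized marked curve even when a rational bubble is inserted. The second is preserved because $u^s \circ \phi = u$ on the simplified component in (a), and because the identification in (b) intertwines $u_\alpha$ with $u_\beta$. The third is trivially preserved. The main obstacle I anticipate is the careful bookkeeping of the stabilization step in (a) and (b): one has to verify that each newly inserted rational bubble inherits the correct role in the fixed-domain framework (including the three-valent $\alpha_i$-role), that the identification in (b) does not violate (iv) by disturbing the edge/marking balance across the components of $\G^s \setminus \{0\}$, and that no spurious loops violating (iii) are created. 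These verifications are elementary but notationally heavy and constitute the bulk of the proof.
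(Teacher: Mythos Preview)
Your proposal is correct and follows the same approach as the paper: a direct case-by-case verification from the definitions of the simplification and stabilization processes. The paper's own proof is far terser—essentially a single sentence asserting the verification is straightforward—and then devotes its remaining lines to explaining what goes \emph{wrong} in the excluded cases (when $C_0$ is involved), which is useful context you omit but not part of the proof of the proposition itself.
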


\begin{proof}  
    It is straightforward to verify from \autoref{def: simplification process} unless the partial simplification involves the main component the partial simplification process preserves all the conditions listed in \autoref{def: condition *}, so let us comment on what goes wrong in these two special cases.  
    
    In case (a), it is possible for the multiple cover $\phi \colon C_0 \to C_0^s$ to map two or more markings in  $p_{\ell+1}, \ldots, p_n \in C_0$, or the points joining $C_{0}$ to a degree $0$ component carrying one of the $p_1,\ldots, p_\ell$ to the same point. Consequently, the curve $C^s$ obtained from the simplification may no longer be modelled on a graph of the form described in items (i) and (ii) of \autoref{def: condition *}. Clearly, the complex domain curve in the image of $\tau_{\cG'}$ is no longer the same either. When $g=0$, a similar issue can arise when case (b) of \autoref{def: simplification process} is applied at the vertex $0$, as the isomorphism $\phi \colon C_\alpha \to C_\beta$  may map one of the points $p_{\ell+1}, \ldots, p_n \in C_0$ to a marking in $\bp'$ on $C_\beta$. 
\end{proof}

\autoref{prop: simplification} shows that the class of stable maps satisfying the fixed-domain constraint is almost closed under partial simplification. This will be used in the inductive step of the proof of \autoref{thm: main II}. The special cases when the partial simplification involves the main component will appear as the base cases of induction. For those we will use different arguments. The following observations will be useful.

\begin{remark}\label{rmk: simplication cover of spine}
    Consider a connected cover simplification described in case (a) of \autoref{def: simplification process} applied to the main component. Let $q_1,\dots,\ q_\ell \in C_0$ be the points where the $\P^1$ components $C_{\alpha_1},\ldots, C_{\alpha_\ell}$ from point (ii) in \autoref{def: condition *} are attached. Let $m$ be the number of distinct images under $\phi$ of the points $p_i$ for $i > \ell$ and $q_i$ for $i \leq \ell$. Let $J \subset \{1,\ldots, \ell\}$ and $K \subset \{\ell + 1, \ldots, n\}$ be subsets of indices such that $|J| + |K| = m$, and the points $p_j$ for $j \in J$ and $q_k$ for $k \in K$ have distinct images under $\phi$. Then, there is a natural map  
    \[
    \beta : \mathcal{M}_{\cG^s}(X, \mathcal{J}) \to \mathcal{M}_{g^s,m}
    \]
    recording the smooth genus $g^s$ curve corresponding to the vertex $0$, with $m$ markings corresponding to $J$ and $K$. The map $\beta$ serves as a replacement for the map $\tau_{\cG}$ and will be used in the proof of \autoref{prop: base case I}.

\end{remark}

\begin{remark}\label{lemma: adges after spine contraction}
    Consider a contracted main component simplification described in case (c) of \autoref{def: simplification process}. If $\cG$ satisfies the fixed-domain constraint from  \autoref{def: condition *}, then the new graph $\cG^s$ satisfies
    \[
        |E(\G^s)| \geq n + 2\ell -3.
    \]
    Indeed, by \autoref{def: condition *}, the vertex $0$ is adjacent to $\ell$ other vertices of $\alpha_1,\ldots,\alpha_\ell$ satisfying $\mathrm{val}(\alpha_i) = 3$, which contributes at least $2\ell$ edges. Contracting vertex $0$ with $n-\ell$ marked points and $\ell$ edges connected to it and applying the stabilization process adds at least $n -3$ edges.
\end{remark}

\section{Proof of \autoref{thm: main II}}\label{sec: proof of main II}

\subsection{Setting the induction} 
We will prove \autoref{thm: main II} by induction on the set of $\cG$ such that
\begin{enumerate}[(i)]
    \item $\cG$ is an $(n+\ell)$-marked genus $g$ augmented graph, as in \autoref{def: augmented graphs},
    \item $\cG$ satisfies the fixed-domain constraint from \autoref{def: condition *}, 
    \item $\cG$ has weighted homology class satisfying \eqref{eqn: num constraint inequality} and \eqref{eqn: bound on k}. 
\end{enumerate}
In particular, every such $\cG$ satisfies \eqref{eqn: num constraint inequality}. The set of such augmented graphs is partially ordered as follows. Let $\widetilde{\mathsf{\Gamma}} = (\mathsf{\Gamma}, \underline{\mathsf{m}}, \underline{d}, \underline{A}, h)$ and $\widetilde{\mathsf{\Gamma}'} = (\mathsf{\Gamma}',\underline{\mathsf{m}}', \underline{d}', \underline{A}', h')$. Recall that the vertices of $\G$ and $\G'$ are ordered, so that $\bd$ and $\bd'$ are vectors of nonnegative integers of length $|V(\G)|$ and $|V(\cG')|$ respectively. We declare $\cG < \cG'$ if and only if  $\mathrm{length}(\underline{d}) < \mathrm{length}(\underline{d}')$, or  
$\mathrm{length}(\underline{d}) = \mathrm{length}(\underline{d}')$ and, at the smallest index $i$ where $d_i$ and $d'_i$ differ, we have $d_i < d'_i$.

The following notation will be useful in estimating the dimension of various moduli spaces.

\begin{notation}
    For any manifold $M$, not necessarily complex, we will write
    \[
        \dim_\C M \coloneq \frac{1}{2} \dim_\R M.
    \]
    For any Fredholm operator $L$, not necessarily complex linear, we will write
    \[
        \mathrm{ind}_\C(L) \coloneq \frac{1}{2} \mathrm{ind}_\R(L),
    \]
    and similarly for Fredholm maps between real Banach manifolds. All dimensions and indices considered in this paper are even, so these numbers are integers. This convention will make various formulae in the upcoming discussion simpler by getting rid of the factor of two. 
\end{notation}  

\begin{notation}
Let $F$ and $G$ be functions depending on $g$, $r$, and an $(n+\ell)$-marked genus $g$ augmented graph $\cG$ as above. (In applications, $F$ will be the dimension of some moduli space or the index of a Fredholm operator.) We will write $F \lesssim G$ if there is a function $H=H(r,g)$ of $g$ and $r$ which does not depend on $\cG$ (or $n$, $k$, $\ell$) such that 
\[
     F(g,r,\cG) \leq G(g,r,\cG) + H(g,r) \quad\text{for all } g, r, \cG.
\]
Similarly, we will write $F \gtrsim G$ if there exists $H=H(g,r)$  such that $F(g,r,\cG) \geq G(g,r,\cG) + H(g,r)$ for all $g, r, \cG$.
\end{notation}

\subsection{Base cases}\label{sec: base cases of the induction}

In \autoref{sec: the inductive step} we will  discuss induction with respect to the vector of degrees $\bd$ of $\cG$. The base case of the induction is when $d_\alpha \in \{0,1\}$ for all $\alpha \in V(\G) \setminus \{ 0 \}$, which corresponds to stable maps which are either constant or simple on components different from the main component $\alpha=0$. The base case is divided into two subcases: when $d_0 > 0$ (treated in \autoref{prop: base case I}) and when $d_0 = 0$ (treated in \autoref{prop: Base Case II}). We may moreover assume that the function $h$, which encodes which components have the same image, is particularly simple.

\begin{proposition}[Non-contracted main component]\label{prop: base case I}
    Suppose that $\cG$ satisfies the three conditions listed at the beginning of \autoref{sec: proof of main II}, as well as
    \begin{enumerate}[(i)]
        \item $d_{0}>0$,
        \item $d_{\alpha}\in \{0,1\}$ for all $\alpha \neq 0$,
        \item either $h^{-1}(1)= \emptyset$ or $h^{-1}(1)= \{ (0, \alpha)\}$.
    \end{enumerate}
    Then there exists $c=c(r,g)$ such for $c_1([\cG,\bm]) \geq c$ the image of the map $\tau_{\widetilde{\mathsf{\Gamma}}}$ in  \eqref{eqn: map tau tilde} has complex codimension strictly bigger than $k$ unless $d_{0}=1$, $m_\alpha=1$ for all $\alpha$ such that $d_\alpha>0$ and $|E(\G)|=0$.
\end{proposition}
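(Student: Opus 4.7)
The plan is to apply the simplification procedure of \autoref{sec: simplification process}, pass to a simple stable map, and use the transversality theorem for simple maps of \autoref{sec: transversality} to bound the codimension of $\mathrm{image}(\tau_{\cG})$ by an index count. Under the hypotheses $d_{0} > 0$, $d_{\alpha} \in \{0, 1\}$ for $\alpha \neq 0$, and $h^{-1}(1) \subset \{(0, \alpha)\}$, the only obstructions to simplicity of $(C, \bp, u)$ in the sense of \autoref{def: simple} are: (a) $d_{0} \geq 2$, forcing a connected cover simplification of $C_{0}$; or (b) $d_{0} = d_{\alpha} = 1$ with $h(0, \alpha) = 1$, forcing a disconnected cover simplification of $(C_{0}, C_{\alpha})$. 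If neither occurs then $(C, \bp, u)$ is already simple, since condition (iii) of the fixed-domain constraint forbids contracted subcurves of positive arithmetic genus and the remaining clauses of \autoref{def: simple} follow from $d_{\alpha} \leq 1$ and $h \equiv 0$ off-diagonal.

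For the simple case $d_{0} = 1$, $h^{-1}(1) = \emptyset$, I would directly compute the codimension from the index of $\tau_{\cG}$. Writing $g^{\mathrm{tot}} = g + h_{1}(\G)$ for the arithmetic genus and $\Delta = \sum_{\alpha}(m_{\alpha} - 1) d_{\alpha} c_{1}(A_{\alpha}) \geq 0$ so that $c_{1}([\cG]) = c_{1}([\cG, \bm]) - \Delta$, the complex dimension of $\M_{\cG}^{*}(X, \cJ)$ is
\begin{equation*}
\dim_{\C} \cJ + c_{1}([\cG]) + (r - 3)(1 - g^{\mathrm{tot}}) + (n + \ell) - |E(\G)|,
\end{equation*}
whereas the target $\cM_{g, n} \times X^{n} \times \cJ$ has complex dimension $\dim_{\C} \cJ + 3g - 3 + n + rn$. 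Substituting the numerical constraint $c_{1}([\cG, \bm]) = r(n + g - 1) - k$ and simplifying yields
\begin{equation*}
\mathrm{codim}_{\C} \, \mathrm{image}(\tau_{\cG}) \geq k + \Delta + \bigl(|E(\G)| - \ell\bigr) + (r - 3) h_{1}(\G).
\end{equation*}
By condition (iv) of the fixed-domain constraint, $|E(\G)| \geq 2\ell$, so the right-hand side is at least $k + \Delta + \ell + (r - 3) h_{1}(\G)$. Since $r \geq 3$, this strictly exceeds $k$ unless $\Delta = \ell = 0$; but then (iv) forces $V(\G) = \{0\}$, hence $\G = T_{g, n}$ with $|E(\G)| = h_{1}(\G) = 0$, and $\Delta = 0$ combined with $d_{0} = 1$ forces $m_{\alpha} = 1$ for every $\alpha$ with $d_{\alpha} > 0$. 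This is exactly the excluded case.

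For the non-simple cases I would apply the relevant simplification from \autoref{def: simplification process}, obtaining a weighted simple map modelled on $\cG^{s}$ with $[\cG^{s}, \bm^{s}] = [\cG, \bm]$ and a strict Chern-class drop $c_{1}([\cG^{s}]) = c_{1}([\cG]) - (d_{0} - 1) c_{1}(A_{0})$ in case (a) (analogous in case (b)). Since the simplification involves the main component, $\tau_{\cG}$ does not factor through $\tau_{\cG^{s}}$, so I would use instead the surrogate map $\beta \colon \M_{\cG^{s}}(X, \cJ) \to \cM_{g^{s}, m}$ of \autoref{rmk: simplication cover of spine} and bound the fibers of $\M_{\cG}(X, \cJ) \to \M_{\cG^{s}}(X, \cJ)$ by the dimension of a Hurwitz space of $d_{0}$-fold branched covers of the simplified main component, which by Riemann--Hurwitz is at most $g - 1 - d_{0}(g^{s} - 1)$; in case (b) the fiber is zero-dimensional, since the merging is via an isomorphism. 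The codimension gain $(d_{0} - 1) c_{1}(A_{0})$ then dominates the Hurwitz contribution provided $c_{1}([\cG, \bm])$ is large compared to $g$ and $r$, which fixes the constant $c(r, g)$. The main obstacle is precisely this domination step for case (a): one must absorb the Hurwitz-fiber growth into the Chern-class drop uniformly in $d_{0}$ and $g^{s}$, and this is where Riemann--Hurwitz combined with positivity of $c_{1}(A_{0})$ is essential.
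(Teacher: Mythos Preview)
Your treatment of the simple case $d_{0}=1$, $h^{-1}(1)=\emptyset$ is correct and matches the paper's Case~1.

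The gap is in the multiply covered case $d_{0}\geq 2$. Your key sentence --- ``the codimension gain $(d_{0}-1)c_{1}(A_{0})$ then dominates the Hurwitz contribution provided $c_{1}([\cG,\bm])$ is large'' --- is exactly the heart of the matter, and it does not follow from anything you have written. The quantity $c_{1}(A_{0})$ is the Chern degree of a \emph{single} component; positivity only guarantees $c_{1}(A_{0})\geq 1$, and $d_{0}$ can equal $2$, so the ``gain'' can be as small as $1$. Largeness of $c_{1}([\cG,\bm])$ does not, by itself, force $c_{1}(A_{0})$ to be large: the Chern degree could be concentrated entirely on the other components. The paper has to earn a lower bound $c_{1}(A_{0})\gtrsim (r-1)(n-\ell)-k$ separately, by observing that the $n-\ell$ markings on $C_{0}$ collapse under $\phi$ to at most $m$ distinct points, so the evaluation lands in a diagonal of codimension $r(n-\ell-m)$ in $X^{n}$. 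Even then this bound is useless when $\ell$ is close to $n$, and the paper must split into three subcases according to whether $\ell\lessgtr \varepsilon n$ and $d_{0}\lessgtr \delta n$ for constants with $\varepsilon+\delta>1$ and $4\delta<\varepsilon$; the hypothesis \eqref{eqn: bound on k} on $k$, which you never invoke, is essential in this case analysis.

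There is a second, independent issue. Bounding the fiber of $\M_{\cG}\to\M_{\cG^{s}}$ by a Hurwitz dimension is not enough, because simplification at the main component changes the \emph{target} of the domain map from $\cM_{g,n}$ to $\cM_{g^{s},m}$: you lose $\dim\cM_{g,n}$ and recover only what $\beta$ sees. The relevant quantity is therefore not $\dim\cH$ but $\dim\cH-\operatorname{rank}(d\gamma)$ for $\gamma\colon\cH\to\cM_{g^{s},m}$, and the paper's \autoref{lemma: Hurwitz} bounds this by $\lesssim 4d_{0}$ via an argument about injectivity of $d\phi$ at unramified marked points. Your Hurwitz bound $g-1-d_{0}(g^{s}-1)$ controls the wrong thing.
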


The proof is rather long and will make use of the following fundamental lemma.

\begin{lemma}\label{lemma: Hurwitz}
     Let $[u,J] \in \M_{\cG}(X,\cJ)$ be as in \autoref{prop: base case I}. Let $m$, $q_i$ and $\beta$ be as in \autoref{rmk: simplication cover of spine}. The following exist:
        \begin{enumerate}[(i)]
            \item a subset $\M \subset \M_{\cG}(X,\cJ)$ containing $[u,J]$;
            \item a finite-dimensional manifold $\cH$;
            \item a map $\cH \to \M_{g,n}$; 
            \item a commutative diagram of continuous maps, with $\beta$ and $\gamma$ smooth,
            \[
            \scalebox{0.85}{$
            \xymatrix{
            & \mathcal{M} \ar[dl] \ar[dr] & \\
            \cH \ar[dr]_\gamma & & \mathcal{M}_{\cG^s}(X, \cJ) \ar[dl]^\beta \\
            & \mathcal{M}_{g^s,m } &
            }
            $}
            \]
        \end{enumerate}
        such that
        \begin{enumerate}[(i)]
            \item the map $\tau_{\cG} \colon \M \to \M_{g,n} \times X^n \times \cJ$ factors through $ \cF \hookrightarrow \cH \times \M_{\cG^s}(X, \cJ) \to \M_{g,n} \times X^n \times \cJ$, where $\cF$ denotes the fiber product of the above diagram;
            \item at every point of $\cH$,
            \begin{equation}
                \label{eqn: rank estimate}
                \dim \cH - \mathrm{rank}(d\gamma) \lesssim 4d_0;
            \end{equation}
            \item $\M_{\cG}(X,\cJ)$ can be covered by countably many subsets $\M$ as above.
        \end{enumerate}
\end{lemma}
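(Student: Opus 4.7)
The plan is to realize $\cH$ as a Hurwitz-type moduli space of the degree-$d_0$ coverings $\phi\colon C_0 \to C_0^s$ that underlie the simplification \autoref{def: simplification process}(a) applied at the main component (augmented by case (b) when $h^{-1}(1) = \{(0,\alpha)\}$), and to bound the fiber dimension of $\gamma$ via Riemann--Hurwitz.

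First, I would stratify $\M_\cG(X, \cJ)$ into countably many locally closed subsets $\cM$ labelled by the discrete output of the simplification: the genus $g^s \leq g$ of $C_0^s$, the homology class $A_0^s \in H_2(X, \Z)$ of the simple map $u_0^s$ (finitely many possibilities, since $[\omega] \cdot A_0^s \leq [\omega] \cdot A_0$), the integer $m$ from \autoref{rmk: simplication cover of spine}, and a surjection $\sigma\colon \{1,\ldots,n\} \twoheadrightarrow \{1,\ldots,m\}$ recording which marked or nodal points on $C_0$ share the same image in $C_0^s$. This accounts for (iii). For each such discrete datum, let $\cH$ parametrize isomorphism classes of tuples $(C_0, \bp, C_0^s, \bar{\bp}, \phi)$, with $C_0^s$ smooth of genus $g^s$ carrying $m$ distinct markings $\bar{\bp}$, $C_0$ smooth of genus $g$ with $n$ markings $\bp$, and $\phi\colon C_0 \to C_0^s$ a holomorphic map of degree $d_0$ satisfying $\phi(p_i) = \bar p_{\sigma(i)}$. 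Standard Hurwitz theory gives $\cH$ the structure of a finite-dimensional complex analytic space. Define $\gamma\colon \cH \to \M_{g^s, m}$ by $(C_0, \bp, C_0^s, \bar{\bp}, \phi) \mapsto (C_0^s, \bar{\bp})$ and $\cH \to \M_{g,n}$ by forgetting the cover; define $\cM \to \cH$ by reading off the canonical factorization $u|_{C_0} = u_0^s \circ \phi$ together with its induced markings, and $\cM \to \M_{\cG^s}(X, \cJ)$ by the simplification process itself. Both composites to $\M_{g^s, m}$ record $(C_0^s, \bar{\bp})$, so the square commutes.

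To verify (i), I would observe that $\tau_\cG$ records $(C_0, \bp) \in \M_{g,n}$, the evaluations in $X^n$, and $J \in \cJ$. The first piece is recovered from $\cH$ via $\cH \to \M_{g,n}$; the evaluations and $J$ are recovered from $\M_{\cG^s}(X, \cJ)$ via its own evaluation and projection maps, since the simplification preserves the evaluation values (the marking $p_i$ for $i > \ell$ becomes $\bar p_{\sigma(i)}$ on $C_0^s$ with $u^s(\bar p_{\sigma(i)}) = u(p_i)$, and the $\bp'$ markings on tails are preserved verbatim). Commutativity of the square then forces the image of $\cM \to \cH \times \M_{\cG^s}(X, \cJ)$ to lie in the fiber product $\cF$.

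Finally, for (ii), fix a point $(C_0^s, \bar{\bp}) \in \M_{g^s, m}$. By Riemann--Hurwitz, a degree-$d_0$ holomorphic map $C_0 \to C_0^s$ between smooth curves of genera $g$ and $g^s$ has branch divisor of degree $B = 2g - 2 - d_0(2g^s - 2)$. The fiber of $\gamma$ over $(C_0^s, \bar{\bp})$ is parametrized, up to discrete monodromy and preimage choices, by the position of these $B$ branch points on $C_0^s$, hence has complex dimension at most $B \leq 2g - 2 + 2d_0 \lesssim 2d_0 \leq 4d_0$, establishing \eqref{eqn: rank estimate}. The main technical nuisance will be standard Hurwitz-theoretic care when $g^s \in \{0,1\}$, where $C_0^s$ has a positive-dimensional automorphism group, together with the added bookkeeping for the disconnected-cover step (b); both contribute only bounded corrections absorbed into the $\lesssim$ constant.
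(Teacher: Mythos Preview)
Your approach is essentially the paper's: realize $\cH$ as a Hurwitz space of degree-$d_0$ covers and bound the defect of $\gamma$ by Riemann--Hurwitz. Two points where the paper is slightly sharper than your write-up. First, the paper also includes the \emph{ramification profile} of $\phi$ (the number $b$ of branch points and the local monodromy type over each) among the discrete stratifying data; without this, your $\cH$ is only a complex analytic space stratified by ramification type, not a manifold as the lemma requires, and ``$\dim\cH$'' in \eqref{eqn: rank estimate} is ambiguous. Once the profile is fixed, $\dim_\C\cH = 3g^s-3+m+b$. Second, for (ii) the paper does \emph{not} bound the fiber of $\gamma$; it gives a pointwise lower bound $\mathrm{rank}(d\gamma)\geq m-b-3$ by restricting $d\gamma$ to variations of the $n$ marked points $\bp$ on the source and observing that $d\varphi$ kills at most $b$ of them (ramification) while the projection to $T\M_{g^s,m}$ has kernel of dimension $\leq 3$ (automorphisms of $S^s$). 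Your fiber-dimension estimate only gives $\dim\cH - \mathrm{rank}(d\gamma)\leq B$ once you know that the fibers of $\gamma$ are \emph{smooth} (so that $\ker d\gamma$ equals the fiber tangent space); this is true by standard Hurwitz theory after the ramification profile is fixed, but it is an extra input that the paper's direct rank argument avoids. Finally, the disconnected-cover step (b) is not part of this lemma in the paper; it is handled separately in Case~3 of the proof of \autoref{prop: base case I}, where the lemma is applied to $\cG^s$ and then (b) is performed afterward.
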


\begin{proof}
    Write $u_0 \colon C_0 \to X$ as a composition 
    \[ 
    u_{0} \colon C_{0} \xrightarrow{\phi} C_{0}^s \to X, 
    \]
    where the second map is simple. Let $\cH$ be the Hurwitz space parametrizing data $(S,\bp,S^s,\bp^s, \varphi)$ where
        \begin{enumerate}[(i)]
            \item $(S,\bp)$ is an $n$-marked smooth complex curve of genus $g$,
            \item $(S^s, \bp^s)$ is an $m$-marked smooth complex curve of genus $g^s$,
            \item $\varphi \colon S \to S^s$ is a degree $d_0$ holomorphic map such that $\bp^s = \varphi(\bp)$  as sets (note that they have different cardinality) and $\varphi$ has the same ramification profiles as $\phi \colon C_0 \to C_0^s$, 
        \end{enumerate}
        where by the ramification profile we mean the number of ramification points $b$ and the ramification data of $\varphi$ over each of these points. The space $\cH$ is defined in such a way that for $\bp = (q_1,\ldots, q_\ell, p_{\ell+1},\ldots, p_n)$ and $\bp^s = \phi(\bp)$, the collection $(C_0, \bp, C_0^s, \bp^s, \phi)$ is an element of $\cH$. Up to discrete ambiguity, the data $(S,\bp,\varphi)$ is determined by $S'$ and a choice of $b+m$ points in $S'$. Therefore, $\cH$ is a complex manifold of dimension
        \[
            \dim_\C \cH = 3g^s-3 + m + b.
        \]
        The map $\cH \to \M_{g^s,m}$ is given by $(S,\bp,S^s,\bp^s, \varphi) \mapsto (S^s,\bp^s)$. The map $\cH \to \M_{g,n}$ is given by $(S,\bp,S^s,\bp^s,\varphi) \mapsto (S,\bp)$. 
        
        Denote by $\M \subset \M_{\cG}(X,\cJ)$ the set of all pairs $[u',J']$ such that $u' \colon C' \to X$ is close to $u \colon C \to X$ and the restriction of $u'$ to the main component factors through a covering $\varphi \colon S \to S^s$ as above. By construction, this gives us a map $\M \to \cH$ which fits into the commutative diagram above and such that $\tau_{\cG}|_{\M}$ factors through $\cF$. Moreover, $\M_{\cG}(X,\cJ)$ is second countable and stratified according to the number $m$ and the ramification profile of $\phi \colon C_0 \to C_0^s$, so  $\M_{\cG}(X,\cJ)$ can be covered by countably many subsets $\M$ as above. 

        The final part of the proof is to show that the derivative of $\gamma \colon \cH \to \M_{g^s,m}$ satisfies estimate \eqref{eqn: rank estimate}.  We claim that the rank of $(d\gamma)_h$ is at least $m-b-3$ at every point of $h \in \cH$. Indeed, let $h= (S,\bp,S^s,\bp^s,\varphi)$. The restriction of $(d\gamma)_h$ to the subspace corresponding to varying the points $\bp$,
        \begin{equation*}
            \bigoplus_{p \in \bp} T_p S \to T_h \cH
        \end{equation*}
        is given by evaluating $d\varphi$ at the points of $\bz$,
        \begin{equation*}
            \bigoplus_{p \in \bp} T_p S \xrightarrow{d\varphi} \bigoplus_{p^s \in \bp^s} T_{p^s} S^s \to T_{S^s,\bp^s} \M_{g^s,m}.  
        \end{equation*}
         This map is almost injective when restricted to the subspace corresponding to $\{ p \in \bp \ | \ d\varphi(p) \neq 0 \}$, precisely, the kernel of this restriction is at most three dimensional. Since $\varphi$ has $b$ ramification points, the size of this set is at least $\max\{ 0, m-b\}$ and we conclude that $\mathrm{rank}(d\gamma) \geq \max\{0,m-b-3\}$. Therefore,
        \[
            \dim \cH - \mathrm{rank}(d\gamma) \leq 3g^s-3 + m+b - \max\{0,m-b-3\} \lesssim 2b.
        \]
        The number of ramification points can be estimated using the Riemann--Hurwitz formula: 
        \[
            b \leq   d_0(2-2g^s) -     (2-2g) \lesssim 2d_0,
        \]
        which implies \eqref{eqn: rank estimate}, completing the proof.
\end{proof}

\begin{proof}[Proof of \autoref{prop: base case I}]
    We will first discuss the proof under the assumption $h^{-1}(1) = \emptyset$; the case $h^{-1}(1)=\{(0,\alpha)\}$ is similar and will be discussed at the end of the proof. \\

    \emph{Case 1: Simple main component}. If $d_{0} = 1$, then by \autoref{def: augmented maps}, every map in $\M_{\cG}(X,\cJ)$ is simple in the sense of \autoref{def: simple}. By the transversality result for simple maps, \autoref{cor: transversality}, the map $\tau_{\cG}$ is Fredholm of index 
    \begin{align*}
        \mathrm{ind}_{\C}(\tau_{\cG}) &= 
    c_1([\cG]) + (r-3)(1 - g - h_1(\G)) + (n+\ell)  - |E(\G)| - nr - 3(g - 1)-n \\
        &\leq c_1([\cG,\bm]) + (r-3)(1 - g) - nr - 3(g - 1) \\
        &= -k
    \end{align*} 
    The  inequality is strict unless $E(\G) = 0$ and $m_0=1$.   \\

    \emph{Case 2: Multiply covered main component}. In this case (including the three subcases below) we assume that $d_{0} > 1$. For every $[u,J] \in \M_{\cG}(X,J)$, the main component $u_0 \colon C_0 \to X$ can be written as a composition 
    \[ 
    u_{0} \colon C_{0} \xrightarrow{\phi} C_{0}^s \to X, 
    \]
    where the second map is simple. (Note that the genus $g^s$ of $C^s$ is not constant as $u$ varies in $\M_{\cG}(X,\cJ)$ but the arguments below account for all possible values of $g^s \leq g$.)
    Applying the connected multiple cover simplification described in part (a) of \autoref{def: simplification process} we obtain an augmented graph $\cG^s$ and an element of $\M_{\cG^s}(X,\cJ)$. The projection of $\tau_{\cG}([u,J])$ on $X^n \times \cJ$ agrees with the image of the simplified map under  
    \begin{equation}\label{eqn: aux }
    \mathcal{M}_{\cG^s}(X, \cJ) \to X^n \times \cJ.
    \end{equation} 
    Since all maps in $\mathcal{M}_{\cG^s}(X, \cJ)$ are simple, by \autoref{thm: transversality}, $\mathcal{M}_{\cG^s}(X, \cJ)$ is a Banach manifold and \eqref{eqn: aux } is a Fredholm map of complex index 
    \begin{equation}\label{eqn: Case I inequality}
    \begin{split}
    & \sum_{\alpha \in V(\G^s)} c_1(A_\alpha) + (r-3)(1-g^s -h_1(\G^s)) + (n+\ell) - |E(\G^s)| - nr \\
    \leq & \ c_1([\cG,\bm]) - (m_{0} d_{0} - 1) c_1(A_{0}) + (r-3) + (n+\ell) - |E(\G^s)| - nr.
    \end{split}
    \end{equation}  

    If \eqref{eqn: Case I inequality} is strictly smaller than $-k$, then the image of \eqref{eqn: aux } has complex codimension strictly bigger than $k$ and so does the image of $\tau_{\cG}$. However, \eqref{eqn: Case I inequality} is not always negative for $n$ large, and we will have to study in greater detail the relationship between $\mathcal{M}_{\cG}(X, \cJ)$ and $\mathcal{M}_{\cG^s}(X, \cJ)$. 

    To do so, we make use of \autoref{lemma: Hurwitz} as follows. Let $\cF \subset \cH \times\M_{\cG^s}(X, \cJ)$ be the fiber product of $\gamma$ and $\beta$ in that lemma. The map $\tau_{\cG}|_{\M}$  factors through
        \begin{equation}
            \label{eqn: fiber product tau}
            \cF \hookrightarrow \cH \times \M_{\cG^s}(X, \cJ) \to \M_{g,n} \times X^n \times \cJ
        \end{equation}
        and by \eqref{eqn: rank estimate} and \autoref{lem: fiber product}, there exists a submanifold $\widetilde\cF \subset \cH \times\M_{\cG^s}(X, \cJ)$ containing $\cF$ and such that the projection to $\M_{\cG^s}(X, \cJ)$ is Fredholm of index $\lesssim 4d$. Therefore, since the index of the composition of Fredholm maps is the sum of indices, the composition
        \begin{equation}
            \label{eqn: fiber product diagram}
            \widetilde \cF \hookrightarrow \cH \times \M_{\cG^s}(X, \cJ) \to \M_{g,n} \times X^n \times \cJ
        \end{equation}
        contains the image of $\tau_{\cG} |_{\M}$ and is Fredholm of index
        \begin{align}\label{eqn: index after Hurw}
            \begin{split}
            \text{index of \eqref{eqn: fiber product diagram} } 
            & \lesssim \text{index of \eqref{eqn: aux }} + 4d_0 - \dim \M_{g,n} \\
            & \lesssim c_1(A_0) + \sum_{\alpha \neq 0} c_1(A_\alpha) + (n+\ell) - |E(\G^s)|-nr+4d_0-n \\
            &\lesssim -(d_0-1) c_1(A_0)- \ell -k+4 d_0
            \end{split}
        \end{align}
       where in the last equality we have used that $|E(\G^s)| \geq |E(\G)| \geq 2\ell$ from $(ii)$ in \autoref{def: condition *} and estimated $nr$ with $c_1([\G, \mathsf{m}]) +k$. If, for $c_1([\cG, \mathsf{m}])$ sufficiently large, the quantity \eqref{eqn: index after Hurw} is less than $-k - R(r,g)$ for any function $R = R(r,g)$ of $r$ and $g$, then we would be done. However, this is not always the case.

    Before proceeding with the actual proof of \autoref{prop: base case I}, we give an estimate for $c_1(A_0)$. If $[u,J]$ is an element of $\mathcal{M}_{\cG^s}(X, \cJ)$, then $u_0 = u|_{C_0}$ is an element of the moduli space of simple maps $\M_{g^s,m}^*(X,A_0;\cJ)$ where $m$ is the number of different points in $C^s$ between $\phi(p_{\ell+1}),\ldots, \phi(p_n)$. Note that the image of \eqref{eqn: aux } is contained is some diagonal of $X^{n}$ of (complex) codimension $r(n-\ell-m)$ in $X^{n}$. The map $\M_{g^s,m}^*(X,A_0;\cJ) \to X^{m} \times \cJ$ has index
    \[
         c_1(A_0) + (r-3)(1-g^s) - (r-1)m.
    \]
    If this index is strictly smaller than $-k+r(n-\ell-m)$,  then the image of \eqref{eqn: aux } has codimension $>k$. Therefore, we may assume that
    \begin{equation}
        \label{eqn: degree lower bound}
        c_1(A_{0}) \geq (r-1)(n-\ell) - (r-3)(1-g^s)-k.
    \end{equation}
    
    We now address the case of a multiply covered main component by distinguishing three subcases. In each of them, we show that either the quantity in~\eqref{eqn: index after Hurw} or that in~\eqref{eqn: Case I inequality} is, for sufficiently large $c_1([\cG,\mathsf{m}])$, less than~$-k - R$ for any function $R = R(r,g)$ depending on $r$ and $g$.

    It will be helpful to fix $\delta,\varepsilon \in (0,1)$ such that $\varepsilon +\delta>1$ and $4 \delta < \varepsilon$. In particular, it must be $\varepsilon >4/5$. For example, $\varepsilon = 0.9$ and $\delta = 0.2$. \\
    
    \emph{Case 2a: Many markings on the main component}. 
   
        Suppose that $\ell \leq \varepsilon n$. In this case, we observe that \eqref{eqn: Case I inequality} is bounded above by
    \[
         c_1([\cG,\bm]) - c_1(A_{0}) + (r-3) + (n+\ell) - |E(\G^s)| - nr
    \] 
        Combining \eqref{eqn: degree lower bound} with $|E(\G^s)| \geq |E(\G)| \geq 2\ell$ from $(ii)$ in \autoref{def: condition *} and \eqref{eqn: num constraint inequality}, we estimate this number by
        $$
        \lesssim - (r-1)(n-\ell) + n  - \ell \leq -(r-2)(n-\ell)  \leq -(r-2)(1-\varepsilon)n 
        $$
        which would be a sufficient estimate if 
        $
        (r-2)(1-\varepsilon)> (r-1-\rho)/5.
        $
        However, this is not possible in general, as the constraints $\varepsilon + \delta > 1$ and $\varepsilon > 4\delta$ prevent it. Nevertheless, for all $\rho$, we can find such $\varepsilon,\delta$ satisfying 
        $
        (r-1)(1-\varepsilon)> (r-1-\rho)/5
        $
        and this will be enough. Indeed, from \eqref{eqn: degree lower bound} and with such choice of $\varepsilon$ and $\delta$, the quantity
        $$
        c_1(A_0) \gtrsim (r-1)(1-\varepsilon)n-k 
        $$
      diverges to infinity for large $c_1([\cG,\mathsf{m}])$ and this will allow us to conclude. We distinguish the following subcases. 
      \begin{enumerate}
        \item[$\bullet$] Suppose $\ell$ is larger than a constant $L(r,g)$ depending only on $r$ and $g$ ( and to be determined in the sequel). Since $c_1(A_0) \geq 5$, then there exists $D(r,g)$ such that if $d_0 \geq D$
        $$
        \text{Equation \eqref{eqn: index after Hurw}} \lesssim -k-d_0 \leq -k-D(r,g)
        $$
        which is as small as we need. If instead $d_0 \leq D(r,g)$, then the quantity in Equation \eqref{eqn: Case I inequality} is 
        $$
        \lesssim 4 d_0- \ell -k \leq 4D(r,g)-L(r,g) -k
        $$
        and the conclusion follows (up to increasing $L(r,g)$).
        \item[$\bullet$] Suppose instead $\ell \leq L(r,g)$. Again, there exists again a constant $D(r,g)$ such that if $d_0 \geq D(r,g)$ then we conclude by looking at  \eqref{eqn: index after Hurw}. On the other hand, if $d \leq D(r,g)$, then we have
        $$
        \text{Equation \eqref{eqn: index after Hurw}} \lesssim -c_1(A_0)-k
        $$
        and this is also enough as $c_1(A_0)$ diverges for large $c_1([\cG,\mathsf{m}])$.
        
    \end{enumerate}
        
        \emph{Case 2b: Many markings on $\P^1$ components and high degree main component}. Suppose that $\ell \geq \varepsilon n$ and $d_{0} \geq \delta n$. This case is similar, but easier. We use $|E(\G^s)| \geq |E(\G)| \geq 2\ell$, $m_{0} \geq 1$, and $c_1(A_{0}) \geq 1$ to estimate \eqref{eqn: Case I inequality} by
        \begin{align*}
           \lesssim - (d_{0} - 1) + n - \ell -k \leq n(1 - \varepsilon - \delta) + 1 -k,
        \end{align*}
        which gives the desired bound by our choice of $\varepsilon$ and $\delta$. \\

        \emph{Case 2c: Many markings on $\P^1$ components and low degree main component}. Suppose that $\ell \geq \varepsilon n$ and $d_{0} \leq \delta n$. 
    Then the quantity \eqref{eqn: index after Hurw} can be estimated as
    \[
    \lesssim 4d_0 - \ell - k \leq (4\delta - \varepsilon)n - k,
    \]
    where we have used $\ell \geq \varepsilon n$ and $d \leq \delta n$.  
    Since $\varepsilon > 4\delta$ by assumption, the proof is complete.

    \emph{Case 3: Disconnected cover involving the main component}. Suppose that $h^{-1}(1)=\{(0,\alpha)\}$. The proof is similar to the cases discussed earlier except we have to apply additionally the disconnected multiple cover simplification process described in part (b) of \autoref{def: simplification process}. Let $[u,J]$ be an element of $\M_{\cG}(X,\cJ)$.  

    Consider first the case $d_0 = 1$. Note that this is only possible if $g=0$. Denote the resulting of the simplification by $u^s:C^s \to X$ and the augmented graph modelling it by $\cG^s$. 
    We have a well-defined map  
    \[
    \rho_{\cG^s} \colon \M_{\cG^s}(X, \cJ) \to X^n \times \M_{0,n} \times \cJ,
    \]  
    given by evaluation at the points $p_1', \ldots, p_\ell', p_{\ell+1}, \ldots, p_n \in C^s$, together with the smooth curve $C_0^s = C_0$ marked at the points $C_0 \cap C_{\alpha_1}, \ldots, C_0 \cap C_{\alpha_\ell}, p_{\ell+1}, \ldots, p_n \in C_0$, and $J \in \cJ$. 

    We use the notation $\rho_{\cG}$ to distinguish this map from the standard map $\tau_{\cG}$. The two are very similar, but $\tau_{\cG}$ is technically only defined when $\cG$ satisfies the fixed-domain constraint (note that in this setting, condition (ii) of \autoref{def: condition *} may fail).

    The map $\rho_{\cG}$ satisfies $\rho_{\cG^s}([u^s,J]) = \tau_{\cG}([u,J])$. Moreover, maps in $\M_{\cG^s}(X,\cJ)$ are simple and represent homology class $[\cG^s]$ satisfying $c_1([\cG^s]) < c_1([\cG])$. Therefore, the argument used in Case 1 proves the theorem in this case.
    
    The second case is $d_0 > 1$. As in Case 2, apply the connected cover simplification process described in (a) of \autoref{def: simplification process}. The resulting augmented graph $\cG^s$ still satisfies $h^{-1}(1)=\{(0,\alpha) \}$. Therefore, we may apply the disconnected cover simplification process to $u^s$ to obtain a new map $u^{ss}$ modelled on an augmented graph $\cG^{ss}$. 
    We still have a well-defined map
    \[
        \beta \colon \M_{\cG^{ss}}(X,\cJ) \to \M_{g^{ss},m}
    \]
    as in \autoref{rmk: simplication cover of spine} and Case 2 discussed above. Moreover, $[\cG^{ss}, \bm^{ss}] = [\cG,\bm]$ and $c_1([\cG^{ss}]) < c_1([\cG])$. We can now repeat the argument used in Case 2. 
\end{proof}

\begin{proposition}[Contracted main component] \label{prop: Base Case II}
    Suppose that $\cG$ satisfies the three conditions listed at the beginning of \autoref{sec: proof of main II}, as well as
    \begin{enumerate}[(i)]
        \item $d_{0}=0$;
        \item $d_\alpha \in \{0,1\}$ for all $\alpha\in V(\G)$;
        \item $h^{-1}(1)= \emptyset$.
    \end{enumerate}
    Then there exists $C=C(r,g)$ such for $n>C$ the image of the map $\tau_{\widetilde{\mathsf{\Gamma}}}$ in  \eqref{eqn: map tau tilde} has complex codimension strictly bigger than $k$.
\end{proposition}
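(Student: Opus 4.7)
The plan is to apply the contracted main component simplification (case (c) of \autoref{def: simplification process}) to every weighted stable map in $\M_{\cG}(X, \cJ)$: since $d_{0} = 0$, collapsing $C_{0}$ and stabilizing yields a new augmented graph $\cG^s$ and a weighted stable map $u^s$ modelled on it, with $[\cG^s, \bm^s] = [\cG, \bm]$ and $[\cG^s] = [\cG]$. By the hypotheses $d_\alpha \in \{0,1\}$ and $h^{-1}(1) = \emptyset$, every non-contracted component of $u^s$ has degree one and no two of them share the same image; the only contracted components of $\cG^s$ are the rational bubbles added during stabilization. To apply the transversality result of \autoref{sec: transversality}, I need to verify that $u^s$ is simple in the sense of \autoref{def: simple}, and in particular has no contracted positive-genus subcurve. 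This follows from condition (iii) of the fixed-domain constraint: every loop of $\G$ passes through a vertex of positive degree, and since stabilization only adds tree-like clusters of rational components, every loop of $\G^s$ descends from a loop of $\G$ and therefore contains a non-contracted vertex.

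Consequently, $\M_{\cG^s}(X, \cJ)$ is a smooth Banach manifold and the evaluation $\mathrm{ev}_{\cG^s}$ is Fredholm. Since the simplification preserves the evaluation at $p_1', \ldots, p_\ell', p_{\ell+1}, \ldots, p_n$, the image of $\tau_{\cG}$ is contained in that of the Fredholm map $\mathrm{id} \times \mathrm{ev}_{\cG^s} \colon \cM_{g,n} \times \M_{\cG^s}(X, \cJ) \to \cM_{g,n} \times X^n \times \cJ$, whose complex index equals $\mathrm{ind}_\C(\mathrm{ev}_{\cG^s})$. Combining the transversality dimension formula with the edge bound $|E(\G^s)| \geq n + 2\ell - 3$ from \autoref{lemma: adges after spine contraction}, the estimate $c_1([\cG^s]) \leq c_1([\cG, \bm]) = r(n + g - 1) - k$, and $g(\cG^s) \geq 0$, I obtain
\[
    \mathrm{ind}_\C(\mathrm{ev}_{\cG^s}) \leq rg - k - \ell.
\]

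The argument then splits according to $\ell$. When $\ell \geq rg + 1$, the displayed bound already yields $\mathrm{ind}_\C(\mathrm{ev}_{\cG^s}) \leq -k - 1$, so the image of $\tau_{\cG}$ has complex codimension strictly greater than $k$. When $\ell \leq rg$, the simplification-based index is insufficient and I will use a diagonal constraint instead: since $u$ is constant on $C_{0}$, the markings $p_{\ell+1}, \ldots, p_n \in C_{0}$ all evaluate to the same point of $X$, so the image of $\tau_{\cG}$ lies in $\cM_{g,n} \times X^\ell \times \Delta \times \cJ$, where $\Delta \subset X^{n-\ell}$ is the small diagonal. This subspace is the image of a Fredholm map of complex index $-r(n - \ell - 1)$, and the hypothesis $k \leq (r - 1 - \rho)n/5$ ensures $r(n - \ell - 1) \geq r(n - rg - 1) > k$ once $n$ exceeds an explicit $C = C(r, g)$. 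The main technical subtlety is the simplicity verification in the first step, which relies essentially on condition (iii) of the fixed-domain constraint; the remainder is a careful bookkeeping of indices and a dimension count against the diagonal.
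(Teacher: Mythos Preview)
Your proof is correct and follows essentially the same approach as the paper: apply the contracted-main-component simplification, invoke transversality for the resulting simple maps together with the edge bound $|E(\G^s)| \geq n + 2\ell - 3$, and combine with the diagonal constraint coming from $u(p_{\ell+1}) = \cdots = u(p_n)$. The only difference is organizational: you case-split at the fixed threshold $\ell = rg$ (large $\ell$ handled by the index bound $\mathrm{ind}_\C(\mathrm{ev}_{\cG^s}) \leq rg - k - \ell$, small $\ell$ by the diagonal), whereas the paper first uses the diagonal to obtain $\ell \geq n - k/r - 1$ and then substitutes this back into the edge bound to get a single estimate $\lesssim -k - c_1([\cG,\bm])/r$; your explicit check of simplicity via condition~(iii) of the fixed-domain constraint is a point the paper leaves implicit.
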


\begin{proof}
    Applying the contracted main component simplification in part $(c)$ of \autoref{def: simplification process} to each $[u,J] \in \M_{\cG}(X,\cJ)$ we obtain a simple map $u^s$ modelled on an augmented graph $\cG^s$.  Since all maps in $\M_{\cG^s}(X,\cJ)$ are simple, by \autoref{thm: transversality} and \autoref{lem: fredholm projection}, $\M_{\cG^s}(X,\cJ)$ is a Banach manifold and $\M_{\cG^s}(X,\cJ) \to X^n \times \cJ$ is Fredholm of complex index
    \begin{equation}\label{eqn: contracted spine}
        (r-3)(1-h_1(\G^s)) + n + \ell + c_1([\cG^s])- |E(\G^s)| -nr \leq (r-3)+n + \ell + c_1([\cG])- |E(\G^s)|-nr
    \end{equation}
    where $|E(\G^s)|$ denotes the number of edges of $\cG^s$. We will show that the right-hand side of \eqref{eqn: contracted spine} is strictly smaller than $-k$ for large $n$. The proposition will then follow. 

    In showing this, we may assume that $\ell$ is not too small. Indeed, for all $u \in \M_{\cG}(X,\cJ)$ we have $u(p_{\ell +1})= \ldots =u(p_n)$ and thus $\mathrm{ev}_{\cG}$ factors through the union of diagonals $X^{\ell+1} \subseteq X^n$ of codimension $r(n -\ell-1)$. If this codimension is greater than $k$, the statement follows. Therefore, we may assume that
    \[
        r(n-\ell-1) \leq k \quad\iff\quad  n-k/r-1 \leq \ell.
    \]
    By \autoref{lemma: adges after spine contraction}, 
    \[
        |E(\G^s)| \geq n + 2\ell - 3 \geq \ell + 2n - k/r - 4.
    \]  
    By combining this with \eqref{eqn: num constraint inequality}, we estimate the complex index \eqref{eqn: contracted spine} by
    \[
        \lesssim  c_1([\cG,\bm]) -nr -n + k/r \lesssim -k - n + k/r \lesssim -k -  c_1([\cG,\bm])/r.
    \]
    It follows that if $ c_1([\cG,\bm])$ is sufficiently large, then the complex index \eqref{eqn: contracted spine} is strictly smaller than $-k$, which completes the proof. 

\end{proof}

This concludes the discussion of the base cases.

\subsection{The inductive step}\label{sec: the inductive step}

   With the base cases established, we prove \autoref{thm: main II} by induction on the set of augmented graphs $\cG$ satisfying the three conditions listed at the beginning of \autoref{sec: proof of main II}, with respect to the order explained in the same place. (In fact, throughout the induction, the weighted homology class $[\cG,\bm]$ remains fixed but we will not use this.)

    The base case, addressed in \autoref{prop: base case I}, is when $d_\alpha \in \{0,1\}$ for all $\alpha \in V(\G)$ and $h^{-1}(1) = \emptyset$. In that case, all maps in $\M_{\cG}(X,\cJ)$ are simple. Suppose now that $\cG$ is not of that form. Then one of the following situations must occur:
    \begin{enumerate}[(i)]
        \item There exists a vertex $\alpha \neq 0$ with $d_\alpha>1$. Then, the connected cover simplification described in part (a) of \autoref{def: simplification process} yields a new augmented graph $\cG^s$ such that $\cG^s < \cG$, $\cG^s$ satisfies the fixed-domain constraint, $[\cG^s,\bm^s] = [\cG,\bm]$, and $\im(\tau_{\cG}) = \im(\tau_{\cG^s})$, and we are done by induction.
        \item There exist distinct vertices $\alpha \neq \beta$, both different from $0$, such that $h(\alpha, \beta) = 1$ and $d_{\alpha}= d_{\beta}=1$. In this case, the disconnected cover simplification described in part (b) of \autoref{def: simplification process} yields a new augmented graph $\cG^s$ such that $\cG^s < \cG$, $\cG^s$ satisfies the fixed-domain constraint, $[\cG^s,\bm^s] = [\cG,\bm]$, and $\im(\tau_{\cG}) = \im(\tau_{\cG^s})$, and we are done by induction.
        \item The case where $d_{0} = 0$, $h^{-1}(1) = \emptyset$, and $d_\alpha \in \{0,1\}$ for all vertices $\alpha$ is treated in Proposition~\autoref{prop: Base Case II}.
        \item The case where $d_0>0$, $h^{-1}(1)= \{(0,\alpha)\}$ and $d_\alpha \in \{0,1\}$ for all $\alpha \neq 0$ is treated in \autoref{prop: base case I}.
        \item The case $h^{-1}(1)=\emptyset$ and $d_{\alpha} \in \{0,1\}$ for all $\alpha \neq 0$ and $d_{0}>1$ is also treated in \autoref{prop: base case I}.
    \end{enumerate}
    This concludes the proof of \autoref{thm: main II}.

\section{\autoref{thm: main II} implies  \autoref{thm: main}}
\label{subsec: thm II to I}

The first observation is that we can restrict ourselves to graphs $\G$ of a very specific shape, described below. Namely, for $m \geq 0$ and $0 \leq \ell \leq n$, let $\mathsf{\Gamma}$ be the dual graph of a curve $C$ of the kind depicted in \autoref{figure-1}. That is: $C$ consists of a main component $C_0$ of genus $g$ carrying $n-\ell$ marked points  $p_{\ell+1}, \ldots, p_n$. Attached to $C_0$ are $m$ trees $S_1,\ldots, S_m$ of $\P^1$ components. Similarly, there are $\ell$ trees $T_1,\ldots, T_\ell$ of $\P^1$ components attached to $C_0$ at points $q_1,\ldots,q_\ell$ and each of $T_i$ contains a marked point $p_i$ for $i=1,\ldots,\ell$.  The reason we can consider only such graphs is that unless $\G$ has this shape, the stabilized curve lies in $\cM_{g,n}\setminus\M_{g,n}$ and so $\cM_{\G}(C, \bp; X,\cJ)$ is empty.

\begin{figure}[h!]
	\begin{center}
		\begin{tikzpicture} [xscale=0.3,yscale=0.3]
                \draw [ultra thick, black] (0,7) to (19,7);
                \node at (15,7) {$\bullet$};
                \node at (15,8) {$p_{\ell + 1}$};
                \node at (18,7) {$\bullet$};
                \node at (18,8) {$p_n$};
                \node at (19,6) {$C_{0}$};
                 \draw [thick, dotted] (16,6) -- (17,6);
                \draw [ultra thick, black] (1,8) to (1,3);
                \draw [ultra thick, black] (0,5) to (2,2);
                \draw [ultra thick, black] (2,3) to (0,0);
                \node at (0,-1) {$S_1$};
                \draw [ultra thick, dotted] (3,4) -- (4.5,4);
                \draw [ultra thick, black] (5,8) to (6,3);
                \draw [ultra thick, black] (6,5) to (5,1);
                \node at (5,0) {$S_m$};
                \draw [ultra thick, black] (8,8) to (8,3);
                \node at (8,4) {$\bullet$};
                \node at (7.3,4) {$p_1$};
                \node at (8,2) {$T_1$};
                \draw [ultra thick, dotted] (9,4) -- (10.5,4);
                \draw [ultra thick, black] (13,8) to (11,3);
                \draw [ultra thick, black] (11,6) to (14,1);
                \draw [ultra thick, black] (14,3) to (12,0);
                \draw [ultra thick, black] (11.6,6) to (16,4);
                \node at (13,2.65) {$\bullet$};
                \node at (11.6,2.55) {$p_\ell$};
                \node at (12,-1) {$T_\ell$};
		\end{tikzpicture}
	\end{center}
	\caption{Topological type of curves considered in \autoref{thm: main}. (This is also Figure $1$ in \cite{CL2}.) \label{figure-1}\label{singular_stable_map} }
\end{figure}
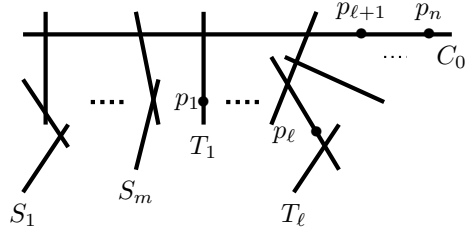

Given a $J$-holomorphic map $u \colon C \to X$ from a domain $C$ as above, its image under $\mathrm{ev}_{\G}$ is
\[
    ( u(p_1), \ldots, u(p_n), J ).
\] 
and $\pi_{\G}(u)=[C_0,q_1,\ldots, q_\ell, p_{\ell+1}, \ldots, p_n]$ where $q_i \in C_0$ are the nodes $T_i \cap C_0$ for $i=1,\ldots, \ell$. Since the trees $S_1, \dots, S_m$ do not affect the image of $\mathrm{ev}_{\mathsf{\Gamma}}$, and forgetting them strictly decreases $c_1(A)$ by the positivity assumption on $X$, we may assume $m = 0$. In fact, when $\ell = 0$ and $u_0$ is simple, the claim follows directly from \autoref{cor: transversality} together with the strict decrease of $c_1(A)$. In all other cases, the following argument applies.

Define an augmented graph $\cG$ associated with $u$ as follows:
\begin{enumerate}[(i)]
    \item relabel the markings $p_1, \ldots, p_\ell$ as $p_1', \ldots, p_\ell'$, and set $\bp' = (p_1', \ldots, p_\ell')$;
    \item for each $i = 1, \ldots, \ell$, insert a genus zero, three-valent vertex at the node $q_i$, and attach to it a new marking $p_i$, and set  $\bp = (p_1, \ldots, p_n)$;
    \item for each vertex $\alpha$ of $\G$, set $d_\alpha = m_\alpha = A_\alpha = 0$ if the restriction $u_\alpha = u|_{C_\alpha}$ is constant; otherwise, let $d_\alpha$ be the degree of $u_\alpha$ onto its image, $m_\alpha=1$, and $A_\alpha$ the homology class of the simple map underlying $u_\alpha$;
    \item for $\alpha \neq \beta$ with $d_\alpha, d_\beta > 0$, set $h(\alpha, \beta) = 1$ if and only if $\im(u_\alpha) = \im(u_{\beta})$.
\end{enumerate}

By construction, the data $\cG = (\Gamma,\bp \cup \bp',\bm,\bd,\bA,h)$ defines an augmented graph. All conditions in the fixed-domain constraint of \autoref{def: condition *} are verified directly by taking the vertex $0$ to correspond to the main component $C_0$, and observing that condition (vi) holds since we are assuming $m = 0$. The weighted homology class of $\cG$ satisfies
\[
    c_1([\cG, \mathbf{m}]) = c_1(A) = r(n + g - 1)-k.
\]
Finally, the tuple $(C, \mathbf{p} \cup \mathbf{p}', u, \mathbf{m})$ defines a weighted $(n + \ell)$-marked stable map modeled on $\cG$ 
and we have
\[
    \mathrm{ev}_{\G}( (C,\bp,u,J) ) = \mathrm{ev}_{\cG}(  (C,\bp \cup \bp',\bm,u,J) )
\]
The conclusion of \autoref{thm: main} now follows at once from \autoref{cor: main II}.

\section{Transversality}
\label{sec: transversality}

Let $\G$ be a connected $n$-pointed graph of genus $g$ as in \autoref{def: stable graph}. 
Denote by $V = V(\G)$ and $E = E(\G)$ the sets of vertices and edges of $\G$. Note that the genus of $\G$ is 
\[
    g = \sum_{\alpha \in V} g_\alpha + h_1(\G),
\]
Note that by the Euler formula,
\[
    h_1(\G) = 1 - |V| + |E|.
\]

Let $A \in H_2(X,\Z)$ be a positive class.  Consider the universal moduli space $\M_\G^*(X,\cJ; A)$  of simple pseudo-holomorphic maps $u \colon C \to X$ from $n$-marked genus $g$ domains modelled on $\G$. For the notion of a simple map, see \autoref{def: simple}

The following transversality theorem is proved when $g=0$ and $\G$ a tree in \cite[Sections 6.2, 6.3]{mcduff}, and in full generality in \cite[Section 4]{ruan1} and \cite[Section 3]{ruan2}; see also the discussion in \cite[Section 1.2]{zinger}. As \cite{ruan1,ruan2} use inhomogeneous perturbations of the Cauchy--Riemann equations, and concern a more general situation than the one in this paper, for completeness we include a short, self-contained proof.

\begin{theorem}
    \label{thm: transversality}
    $\M_\G^*(X,A,\cJ)$ is a Banach manifold and the map
    \[
        \pi \colon \M_\G^*(X,\cJ; A) \to \cJ 
    \]
    is Fredholm of complex index
    \begin{equation}
        \label{eqn:index}
        \mathrm{ind}_\C(\pi) =
        c_1(A) + (r-3)(1-g) - |E| + n.
    \end{equation}
\end{theorem}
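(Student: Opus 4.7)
The plan is to realize $\M_\G^*(X,A,\cJ)$ as the zero set of a smooth Fredholm section of a Banach bundle over a product of Banach manifolds indexed by the vertices of $\G$, subject to matching conditions at the edges. For each vertex $\alpha \in V$, I would introduce the space $\cB_\alpha$ of tuples $(C_\alpha, \underline{z}_\alpha, u_\alpha, J)$ where $C_\alpha$ ranges over smooth genus $g_\alpha$ curves with $\mathrm{val}(\alpha)$ special points, $u_\alpha$ is a $W^{k,p}$ map to $X$ in a fixed homology class $A_\alpha$ with $\sum A_\alpha = A$, and $J \in \cJ$; this carries a natural Banach bundle whose fiber is the space of $(0,1)$-forms with values in $u_\alpha^*TX$, with section $\delbar_J$. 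Fibering all $\cB_\alpha$ over $\cJ$ and pulling back by the evaluation maps $\ev_h \colon \cB_\alpha \to X$ at the half-edges, I impose the node matching $u_{v(h)}(z_h) = u_{v(h')}(z_{h'})$ for each edge $\{h,h'\} \in E$, giving a universal moduli space whose zero locus for the combined $\delbar_J$ sections is $\M_\G^*(X,A,\cJ)$.

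The heart of the argument is showing that the universal linearization (taking into account variations of $J$) is surjective at every simple map. For a single nonconstant component $u_\alpha$, condition (ii) of \autoref{def: simple} says $u_\alpha$ is not multiply covered, so by the standard injectivity/unique continuation arguments of \cite[Ch.~2]{mcduff} it has a dense set of somewhere-injective points. Condition (iii) guarantees that distinct nonconstant components have distinct images, and condition (i) ensures that every constant component is contained in a rational subtree; this lets me choose somewhere-injective points $w_\alpha \in C_\alpha$, one per nonconstant component, such that the images $u_\alpha(w_\alpha) \in X$ are pairwise distinct and disjoint from all nodal and marked points. A $J$-variation supported near $u_\alpha(w_\alpha)$ then perturbs only the $\delbar_{J}$ equation on $C_\alpha$, reducing surjectivity to the by-now-standard statement for a single simple map, while transversality of the node-matching constraint follows from the fact that the evaluation map at any smooth point of $C_\alpha$ is a submersion on the universal moduli space (a consequence of the same somewhere-injectivity together with freedom to vary the domain). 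The Sard--Smale theorem applied to the projection $\pi$ then yields the manifold structure on the universal space.

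For the index computation, each vertex contributes $c_1(A_\alpha) + r(1-g_\alpha)$ from the Cauchy--Riemann operator plus $3g_\alpha - 3 + n_\alpha$ from the moduli of $(C_\alpha, \underline{z}_\alpha)$ with $n_\alpha = \mathrm{val}(\alpha) + |\{i : p_i \in C_\alpha\}|$ marked points, totaling $c_1(A_\alpha) + (r-3)(1-g_\alpha) + n_\alpha$ per vertex. Summing, and using $\sum_\alpha n_\alpha = 2|E| + n$, $\sum_\alpha g_\alpha = g - h_1(\G)$, and $|V| = 1 - h_1(\G) + |E|$, the pre-matching total is $c_1(A) + (r-3)(1 + |E| - g) + 2|E| + n$. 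Each of the $|E|$ matching conditions is of complex codimension $r$, so subtracting $r|E|$ gives the asserted index $c_1(A) + (r-3)(1-g) - |E| + n$.

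The main obstacle, which is where the higher-genus case differs from \cite[Ch.~6]{mcduff}, is verifying surjectivity of the universal linearization in the presence of positive-genus components and nontrivial $h_1(\G)$. Two technical points require care: first, for a constant component $u_\alpha$ whose domain $C_\alpha$ has positive genus, condition (i) of \autoref{def: simple} already excludes this, so constants are confined to rational trees and contribute only to the moduli of the domain; second, the selection of pairwise-distinct somewhere-injective images used in the perturbation step needs the entire global data of $\G$ — it is not enough to argue vertex by vertex — and conditions (ii) and (iii) of simpleness are exactly what is required to make this global choice. Once surjectivity is established, the index formula follows by Riemann--Roch combined with the edge-codimension bookkeeping above.
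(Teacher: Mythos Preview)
Your overall architecture matches the paper's: work on the normalization, set up the Cauchy--Riemann section together with the edge-matching evaluation map into the diagonal, prove universal surjectivity by varying $J$ near somewhere-injective points on the nonconstant components, and compute the index by Riemann--Roch plus the codimension of the diagonal. Your index bookkeeping is correct and agrees with the paper's.

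The gap is in the transversality argument on the \emph{constant} components. You write that node-matching transversality ``follows from the fact that the evaluation map at any smooth point of $C_\alpha$ is a submersion on the universal moduli space (a consequence of the same somewhere-injectivity together with freedom to vary the domain).'' On a constant component neither mechanism is available: there are no injective points, $J$-variations act through $\hat J \circ du \circ j = 0$, and moving the marked or nodal points on the domain does not move the evaluation because the map is constant. So for an edge $(\alpha,\beta)$ internal to a constant subtree you have not shown that the linearized matching condition is surjective. The paper isolates exactly this point as a separate lemma: on a constant tree of $\P^1$'s, $D_u$ is the standard $\delbar$ (surjective), $\ker D_u \cong T_xX$ on each component, and the residual node-matching operator becomes the simplicial coboundary $\bigoplus_{\alpha\in T} T_xX \to \bigoplus_{e\in E(T)} T_xX$, which is surjective precisely because $T$ is a tree. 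This is where condition~(i) of simpleness is actually used; your remark that constants ``contribute only to the moduli of the domain'' is not a substitute for this computation. Once you add this lemma (or an equivalent cohomological statement $H^1(T;T_xX)=0$), your argument goes through and coincides with the paper's.
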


\autoref{thm: transversality}
together with \autoref{lem: fredholm stabilization} imply the following.

\begin{corollary}
    \label{cor: transversality}
    The map
    \[
        \tau_\G \colon \M_\G^*(X,A, \cJ) \to \cM_{g,n} \times X^n \times \cJ 
    \]
    is Fredholm of complex index
    \begin{equation}
        \label{eqn:index with domain}
        \mathrm{ind}_\C(\pi) =
        c_1(A) + r(1-g) - |E| - nr.
    \end{equation}

\end{corollary}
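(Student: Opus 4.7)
The plan is to reduce the statement to \autoref{thm: transversality} by packaging the extra target factors $\cM_{g,n} \times X^n$ as a finite-dimensional augmentation of the Fredholm map $\pi$. The map $\tau_\G$ decomposes as $\tau_\G = \pi \times (\pi_\G \times \mathrm{ev})$, where $\pi_\G \colon \M_\G^*(X,A,\cJ) \to \cM_{g,n}$ is the stabilization map of \autoref{def: stabilization morphism} and $\mathrm{ev}\colon \M_\G^*(X,A,\cJ) \to X^n$ is evaluation at the $n$ markings. Both $\pi_\G$ and $\mathrm{ev}$ are smooth maps from the Banach manifold $\M_\G^*(X,A,\cJ)$ into finite-dimensional targets, with $\cM_{g,n}$ viewed locally as a smooth orbifold chart at a generic stable curve (or, equivalently, after passing to a finite automorphism-free cover).

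Next, I would invoke \autoref{lem: fredholm stabilization}, which captures the standard principle that combining a Fredholm map $\pi \colon B \to \cJ$ with a smooth map $B \to N$ to a finite-dimensional manifold $N$ produces a Fredholm map $B \to \cJ \times N$ whose index drops by exactly $\dim N$. Applying this with $N = \cM_{g,n} \times X^n$, the complex index of $\tau_\G$ is obtained from that of $\pi$ by subtracting
\[
\dim_\C(\cM_{g,n} \times X^n) \;=\; (3g - 3 + n) + nr.
\]
Plugging in \eqref{eqn:index} then yields
\[
\mathrm{ind}_\C(\tau_\G) \;=\; c_1(A) + (r-3)(1-g) - |E| + n \;-\; (3g - 3 + n) \;-\; nr \;=\; c_1(A) + r(1-g) - |E| - nr,
\]
matching \eqref{eqn:index with domain}.

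I do not expect a genuine obstacle: the only point requiring care is the verification that $\pi_\G$ is smooth as a map between the relevant Banach manifolds, which is routine once one writes stabilization in families. The Fredholm index computation is then purely formal, and no transversality beyond that already provided by \autoref{thm: transversality} is needed.
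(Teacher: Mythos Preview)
Your proposal is correct and matches the paper's approach exactly: the paper derives the corollary in one line from \autoref{thm: transversality} and \autoref{lem: fredholm stabilization}, and your write-up is simply an expanded version of that sentence, with the same index computation $c_1(A) + (r-3)(1-g) - |E| + n - (3g-3+n) - nr = c_1(A) + r(1-g) - |E| - nr$. The application of \autoref{lem: fredholm stabilization} is precisely the case $U_0 = \{\mathrm{pt}\}$, $U_1 = \cM_{g,n} \times X^n$.
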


\begin{proof}[Proof of \autoref{thm: transversality}]
    To keep the notation simple, assume that $n=0$. We will prove the theorem for $\M^*_\G(X,A,\cJ)$ replaced by the \emph{local} moduli space of simple maps from domains close to a given nodal curve $C$ modelled on $\G$, in the sense that we now explain.

    Let $\Sigma = \bigsqcup_{\alpha\in\G} C_\alpha$ be the normalization of $C$; note that $\Sigma$ is disconnected when $|V| > 1$.  We will think of $\Sigma$ as a smooth manifold equipped with an almost complex structure $j_0$. Fix also a Riemannian metric on $\Sigma$. 
    
    Let $\vv E$ be the set of pairs consisting of an edge in $E$ and a orientation on it, so that $|\vv E| = 2|E|$. We will write an element of $\vv E$ as $e = (\alpha,\beta)$ where $\alpha, \beta \in V$ are the beginning and end of the edge. The same edge with the reversed orientation will be denoted by $\bar e = (\beta,\alpha)$. 
    For every $e = (\alpha,\beta) \in \vv E$ there is a corresponding point $z_{0,e} \in C_\alpha$, such that the points $z_{0,e}$ and $z_{0, \bar e}$ map to the same node of $C$ under $\Sigma \to C$. Denote by $\bz_0$ the collection of all these points. 

    Denote by $\mathcal{P}$ the infinite-dimensional Fréchet manifold parametrizing pairs $(j,\bz)$ consisting of an (integrable) almost complex structure on $\Sigma$ and an ordered collection of $|\vv E|$ distinct points, distributed among the connected components of $\Sigma$ in the same way as the points in $\bz_0$. Set
    \[
        H = \prod_\alpha \mathrm{Diff}_+(C_\alpha),
    \] 
    where $\mathrm{Diff}_+$ denotes the group of orientation preserving diffeomorphisms. The group $H$ acts on $\mathcal{P}$, with the orbits corresponding to biholomorphism classes of marked curves of the relevant topological type. The stabilizer of $(j_0,\bz_0)$ in $H$ is the group $G =\mathrm{Aut}(\Sigma,j_0, \bz_0)$ of biholomorphisms of $(\Sigma,j_0)$ preserving every point in $\bz_0$.
    Let $\mathcal{S} \subset \mathcal{P}$ be a \emph{local Teichmüller slice} through $(j_0, \bz_0)$, characterized by the following properties:
    \begin{enumerate}[(i)]
        \item $\mathcal{S}$ is a smooth submanifold of $\mathcal{P}$ containing $(j_0,\bz_0)$ of dimension given by
        \begin{equation}
        \label{eqn:dimension of S}
        \dim_\C \mathcal{S} = 3g(\Sigma) -3|V| + |\vv E| + \dim_\C G,
     \end{equation}
    where we declare
    \[
        g(\Sigma) = \sum_{\alpha \in \G} g_\alpha;
    \]
        \item $\mathcal{S}$ is preserved by the action of $G$;
        \item the map
        \begin{equation*}
            (\mathcal{S} \times H)/ G  \to \mathcal{P}  
        \end{equation*}
        induced by the $G$-invariant multiplication map $\mathcal{S}\times H \to \mathcal{P}$, is an $H$-equivariant local homeomorphism from a neighborhood of $[j,\bz_0, \mathrm{id} ]$ to a neighborhood of $(j,\bz_0)$; in particular, the natural map
        \[
            \mathcal{S} / G \to \mathcal{P} / H
        \]
        is a local homeomorphism around $[j_0,\bz_0]$;
        \item the  tangent space to $\mathcal{S}$ at $(j_0,\bz_0)$ is transverse to the tangent space to the $H$-orbit of $(j_0,\bz_0)$ in the $L^p$ sense explained in \cite[Definition 2.49]{wendl}. 
    \end{enumerate}
    See \cite[Sections 4.2, 4.3; in particular, Theorem 4.30, Lemma 4.41, Theorem 4.43]{wendl} for a discussion of Teichmüller slices and a proof of their existence.
 
    Fix $p>2$. Let $\cB \subset W^{1,p}(\Sigma, X)$ be the space of simple maps $u \colon \Sigma \to X$ of Sobolev class $W^{1,p}$ with fixed $u_*[C_\alpha]$ for every $\alpha \in \G$; this is an open subset of $W^{1,p}(\Sigma, X)$ and so a Banach manifold. Let $\cE \to \cJ \times \cS \times \cB$ be the Banach vector bundle whose fiber over $(J, j,\bz,u)$ is the space $L^p \Omega^{0,1}(\Sigma, u^*TX)$ of $L^p$ sections of the bundle $\Lambda^{0,1} T^*\Sigma \otimes u^*TX$, where $(0,1)$ forms on $\Sigma$ are taken with respect to $j$ and the tensor product is taken over complex numbers with respect to $J$.  Consider the $G$-equivariant section
    \begin{gather*}
        \psi \colon \cJ \times \mathcal{S} \times \cB \to \cE, \\
        \psi(J,j,\bz,u) = \delbar_{J,j}(u)
    \end{gather*}
    where
    \[
        \delbar_{J,j}(u) = \frac12 (d u + J (u) \circ du \circ j)
    \]
    is the nonlinear Cauchy--Riemann operator with respect to $J$ and $j$. For a future argument, it is useful to compute the index of the restriction $\psi_J$ of $\psi$ to the slice $\{ J \} \times \cS \times \cB$.      First, the restriction of $\psi$ to every slice $\{ (J,j,\bz) \} \times \cB$ is a Fredholm section. Its index is given by the Riemann--Roch formula,
    \[
         \mathrm{ind}_\C(\delbar_{J,j}) = \sum_{\alpha\in\G} c_1(A_\alpha) + r(1-g_\alpha).
    \]
    Therefore, \autoref{lem: fredholm stabilization} and \eqref{eqn:dimension of S}, the restriction $\psi_J$ of $\psi$ to the slice $\{ J \} \times \mathcal{S} \times \cB$ is a Fredholm section of index 
    \begin{equation}
        \label{eqn:index with varying j}
        \mathrm{ind}_\C(\psi_J) =  
        \mathrm{ind}_\C(\delbar_{J,j}) + \mathrm{dim}_\C \mathcal{S} = c_1(A) + (r-3)(|\G|-g(\Sigma)) + |\vv E|) + \dim_\C G.
    \end{equation}
    
    A standard argument shows that $\psi$ is transverse to the zero section and so $\psi^{-1}(0)$ is a Banach manifold. We could then study an appropriate evaluation map defined on $\psi^{-1}(0)$, as in \cite[Section 6.3]{mcduff}. However, we choose a different approach, and instead of looking at $\psi^{-1}(0)$ we will study a map combining $\psi$ and the evaluation map. To that end, consider the $G$-invariant evaluation map
    \begin{gather*}
        \ev \colon \mathcal{S} \times \cB \to X^{\vv E} = \prod_{e\in \vv E} X  \\
        \ev(j,\bz,u) = \ev(\bz,u) = ( u(z_{e}) ).
    \end{gather*}
    Define
    \begin{gather*}
        \Psi \colon \cJ \times\mathcal{S} \times \cB \to \cE \times X^{\vv E}, \\
        \Psi(J,j,\bz,u) = ( \psi(J,j,u), \ev(\bz,u) ).
    \end{gather*}
    Consider the diagonal in $X^{\vv E}$,
    \[
        \Delta^E = \{ (x_{e}) ) \in X^{\vv E} \ | \ x_{e} = x_{\bar e} \},
    \]  
    and set 
    \[
        \M^*_{\mathrm{loc}} = \Psi^{-1}(0 \times \Delta^E)/G
    \]
    Note that $G$ acts freely on $\Psi^{-1}(0\times\Delta^E)$ by the definition of $\mathcal{B}$ as a space of simple maps. Therefore, to show that $\M^*_{\mathrm{loc}}$ is a Banach manifold, it suffices to show that $\Psi$ is transverse to $0 \times \Delta^E$, where $0 \subset \cE$ denotes the zero section. It follows then from \autoref{lem: fredholm projection} that the projection $\pi \colon \M^*_{\mathrm{loc}} \to \cJ$ is Fredholm. The index of $\pi$  is computed by subtracting from \eqref{eqn:index with varying j} the dimension of $G$ and the codimension of $\Delta^E$ in $X^{\vv E}$:
    \begin{align*}
         \mathrm{ind}_\C(\pi) &= c_1(A) + (r-3)(|\G|-g(\Sigma)) + |\vv E| - r |E| \\
        &= c_1(A) + (r-3)(1-g(\Sigma)-h_1(\G) + |E|) + (2-r)|E| \\
        &= c_1(A) + (r-3)(1-g) - |E|, 
    \end{align*}
    which agrees with \eqref{eqn:index} for $n=0$. 

    It remains to prove the transversality of the map $\Psi$ to $0 \times \Delta^E$. Let $p = (J,j,\bz,u)$ be a point in $\Psi^{-1}(0\times\Delta^E)$. Set $\bx = \ev(p)$ and denote by $N_{\bx} = T_{\bx} X^{\vv E} / T_{\bx} \Delta^E$ the normal space to $\Delta^E$ at $\bx$. We can realize $N_{\bx}$ as the subspace of $T_{\bx} X^{\vv E}$ consisting of collections  $\bv = (v_e)$ of vectors $v_e \in T_{x_e} X$ satisfying $v_e = - v_{\bar e}$ for every $e \in \vv E$. 
    Consider the operators
    \begin{gather*}
        \mathcal{L}_J \colon T_J \cJ \to L^p \Omega^{0,1}(C,j; u^*TX) \oplus N_{\bx}, \\
        \mathcal{L}_u \colon T_u \cB \to L^p \Omega^{0,1}(C,j; u^*TX) \oplus N_{\bx}
    \end{gather*}
    defined by differentiating $\Psi$ at $p$ in the direction of $J$ and $u$, respectively, and applying projection
    \[
         T_{\Psi(p)}(\cE \times X^{\vv E}) \to L^p\Omega^{0,1}(C,j; u^*TX) \oplus N_{\bx}.
    \]
    In fact, the image of $\mathcal{L}_J$ lies in the first summand as the evaluation map does not depend on $J$.
    
    We will show that $\mathcal{L} = \mathcal{L}_J + \mathcal{L}_u$ is surjective; this implies that $\Psi$ is transverse to $0\times\Delta^E$ at $p$. Since $\mathcal{L}_u$ is Fredholm (see below), the image of $\mathcal{L}$ is closed and has finite codimension. If $\mathcal{L}$ is not surjective, then, by the Hahn--Banach theorem, there exists a non-zero pair $(\eta, \bv)$ such that
    \begin{itemize}
        \item $\eta \in L^q \Omega^{0,1}(\Sigma, u^*TX)$ for $1/p + 1/q = 1$, 
        \item $\bv = (v_e) \in N_{\bx} \subset T_{\bx} X^{\vv E}$; 
        \item $(\eta,\bv)$ pairs to zero with every element in the image of $\mathcal{L}$, with respect to the pairing induced by the pairing of $L^p$ and $L^q$ and a Riemannian metric on $X$. 
    \end{itemize}
    Let $\hat u \in W^{1,p}(\Sigma, u^*TX) = T_u \cB$. We have
    \[
        \mathcal{L}_u \hat u = (D_u\hat u, ( \hat u(z_e) )  )
    \]
    where $D_u$ is the linearization of the Cauchy--Riemann operator $\delbar_{Jj}$ with fixed $(J,j)$ \cite[Section 3.1]{mcduff}. Therefore,
    \begin{equation}
        \label{eqn:variation of u}
        0 = \langle \mathcal{L}_u \hat u, (\eta,\bv) \rangle = \langle D_u \hat u, \eta \rangle + \sum_{e \in \vv E} \langle \hat u(z_e), v_e \rangle.
    \end{equation}
    In particular, for all $\hat u$ vanishing at $\bz$,
    \[
        \langle D_u \hat u, \eta \rangle = 0,
    \]
    that is: $\eta$ is a weak $L^q$ solution to the equation $D_u^*\eta =0$ in $\Sigma\setminus\bz$. Here $D_u^*$ is the formal adjoint of $D_u$: the first order differential operator defined by the property 
    \[
        \int_\Sigma \langle D_u \xi_1, \xi_2 \rangle \mathrm{vol}_\Sigma = \int_\Sigma \langle \xi_1, D_u^*\xi_2 \rangle \mathrm{vol}_\Sigma
    \]
    for all $\xi_1 \in \G(\Sigma,u^*TX)$, $\xi_2 \in \Omega^{0,1}(\Sigma, u^*TX)$ \cite[Section 3.1]{mcduff}. It follows from elliptic regularity \cite[Proposition 3.1.11]{mcduff} that $\eta$ is, in fact, of class $W^{1,p}_{\mathrm{loc}}$ in $\Sigma\setminus\bz$; therefore, continuous in that region. 

    Moreover, for every $\hat J \in T_J \cJ = C^k(X, \mathrm{End}(TX,J,\omega))$, we have
    \[
        \langle \mathcal{L}_J \hat J, \eta \rangle = 0
    \]
    Explicitly, as in \cite[Proof of Proposition 3.2.1]{mcduff}, 
    \begin{equation}
        \label{eqn:variation of J}
        \int_{\Sigma} \langle \hat J(u) \circ du \circ j, \eta \rangle \mathrm{vol}_{\Sigma} = 0
    \end{equation} 
    for every $\hat J$. It follows in a standard way from equation \eqref{eqn:variation of J} and the continuity of $\eta$ on $\Sigma \setminus \bz$ that $\eta$ vanishes on the set of injective points 
    \[
        R = \{ z \in \Sigma \setminus \bz \ | \ du(z) \neq 0, \ u^{-1}(u(z)) = \{ z \} \};
    \]
    see \cite[Proof of Proposition 3.2.1]{mcduff}.  Let $\G^* \subset \G$ be the set of vertices $\alpha \in \G$ with $A_\alpha \neq 0$ and let $\Sigma^* \subset \Sigma$ be the union of the corresponding connected components of $\Sigma$. Since $u$ is simple, $R$ is dense in $\Sigma^*$, so $\eta$ vanishes on $\Sigma^*$.    Let $\hat u \in W^{1,p}(\Sigma, u^*TX)$ be any  section (not necessarily vanishing at $\bz$) supported on $\Sigma^*$. Since $\eta = 0$ on $\Sigma^*$, 
    \[
        \langle \mathcal{L}\hat u, \bv \rangle = 0,
    \]
    which by \eqref{eqn:variation of u} is equivalent to
    \[
        \sum_{e \in \vv E^*} \langle \widehat u(z_e), v_e \rangle  = 0
    \]
    with $\vv E^* \subset \vv E$ denoting the set of oriented edges beginning in $\G^*$. 
    Since we can find $\hat u$ such that $\hat u(z_e) \in T_{x_e} X$ are arbitrary vectors, it follows that $v_e = 0$ for $e \in \vv E^*$. Note that this implies also $v_e = 0$ if $\bar e \in \vv E^*$ because $\bv$ is in $N_{\bx}$.

    It remains to show that $\eta = 0$ on $\Sigma\setminus\Sigma^*$ and $v_e = 0$ for $e \in \vv E\setminus \vv E^*$. By assumption, the subgraph $\G\setminus\G^*$ is a disjoint union of trees and $u$ is constant on each connected component of $\Sigma\setminus\Sigma^*$. Let $T$ be a connected component of $\G\setminus\G^*$ and $\Sigma_T \subset \Sigma\setminus\Sigma^*$ the corresponding union of  $\P^1$ components. Denote $u_T = u|_{\Sigma_T}$ and by $\mathcal{L}_{u_T}$ the corresponding operator. It is related to $\mathcal{L}_u$ as follows:
    \[
        \mathcal{L}_u = \bigoplus_{\alpha \in V(\G)} L_{u_\alpha} \quad\text{and}\quad \mathcal{L}_{u_T} = \bigoplus_{\alpha \in V(T)} L_{u_\alpha}.
    \]
    Since $u_T$ is locally constant, we can compute $\mathcal{L}_{u_T}$ directly. This is done in \autoref{lem:surjectivity for constant maps}  proved below, which shows that $\mathcal{L}_{u_T}$ is surjective, i.e. $\eta =0$ on $\Sigma_T$ and $v_e=0$ for $e \in \vv E_T$. 
\end{proof}

The proof of \autoref{lem:surjectivity for constant maps} is preceded by a lemma which describes the cokernel of $\mathcal{L}_u$  in the following general  situation. Let $u \colon (\Sigma,j) \to (X,J)$ be a $J$--holomorphic map from a compact, possibly disconnected Riemann surface $(\Sigma,j)$ and let $\bz = (z_i)_{i\in I}$ be a collection of distinct points in $\Sigma$ indexed by a finite set $I$. Set $\bx = (x_i)_{i\in I}$ where $x_i = u(z_i)$. Assume that $\bx$ belongs to a generalized diagonal in $X^I$. Such a diagonal is specified by an equivalence relation $\sim$ on $I$:
\[
    \Delta = \Delta_{\sim} = \{ (y_i) \in X^I \ | \ y_i = y_j \text{ if } i \sim j \}.
\]
Let $N_{\bx} = T_{\bx} X^I / T_{\bx} \Delta$ be the normal space to $\Delta$ at $\bx$.

\begin{lemma}
    \label{lem:cokernel of evaluation}
    In the situation described above, consider the operator
    \begin{gather*}
        \mathcal{L}_u \colon W^{1,p}(\Sigma,u^*TX) \to \Omega^{0,1}(\Sigma,u^*TX) \oplus N_{\bx}, \\
        \mathcal{L}_u \hat u = (D_u \hat u, [\ev_{\bz}(\hat u)] ),
    \end{gather*}
    where $D_u$ is the linearization of the Cauchy--Riemann operator $\delbar_{Jj}$ at $u$ and
    \[
        [\ev_{\bz}] \hat u = [ (\hat u(z_i))_{i\in I} ]
    \]
    is the projection of the evaluation map at $\bz$ on $N_{\bx}$. There is a short exact sequence
    \[
        \begin{tikzcd}
        0 \ar{r} & \coker( [\ev_{\bz} ]|_{\ker D_u} ) \ar{r} & \coker \mathcal{L}_u \ar{r} & \coker D_u \ar{r} & 0.
        \end{tikzcd}
    \]
\end{lemma}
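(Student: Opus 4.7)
The statement is an elementary piece of homological algebra applied to the pair of linear maps $D_u$ and $[\ev_{\bz}]$ with common domain $W^{1,p}(\Sigma, u^*TX)$. The plan is to obtain the exact sequence either by a direct diagram chase or by applying the snake lemma to the short exact sequence of targets
\[
    0 \to N_{\bx} \xrightarrow{\iota} \Omega^{0,1}(\Sigma, u^*TX) \oplus N_{\bx} \xrightarrow{\pi} \Omega^{0,1}(\Sigma, u^*TX) \to 0,
\]
where $\iota(v) = (0,v)$ and $\pi(e,v)=e$.

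The direct route is shortest, so I would first note that the projection $\pi$ induces a surjective map $\coker \mathcal{L}_u \to \coker D_u$ sending $[(e,v)] \mapsto [e]$: surjectivity is obvious by choosing $v=0$. Next, I would identify its kernel. Given $[(e,v)]$ with $[e]=0 \in \coker D_u$, write $e = D_u \hat u$; then
\[
    (e,v) - \mathcal{L}_u \hat u = (0,\, v - [\ev_{\bz}]\hat u),
\]
so $[(e,v)] = [(0,\, v - [\ev_{\bz}]\hat u)]$ lies in the image of the map $N_{\bx} \to \coker \mathcal{L}_u$ sending $v \mapsto [(0,v)]$. This shows the kernel of $\coker \mathcal{L}_u \to \coker D_u$ coincides with the image of this map.

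Finally, I would compute this image: $[(0,v)] = 0$ in $\coker \mathcal{L}_u$ if and only if there exists $\hat u$ with $\mathcal{L}_u \hat u = (0,v)$, i.e.\ $\hat u \in \ker D_u$ and $[\ev_{\bz}]\hat u = v$. Hence the induced map $N_{\bx} \to \coker \mathcal{L}_u$ factors through the quotient by $[\ev_{\bz}](\ker D_u)$, yielding an injection
\[
    \coker\bigl([\ev_{\bz}]|_{\ker D_u}\bigr) = N_{\bx}/[\ev_{\bz}](\ker D_u) \hookrightarrow \coker \mathcal{L}_u,
\]
whose image is precisely the kernel of $\coker \mathcal{L}_u \to \coker D_u$. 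Splicing these observations gives the desired short exact sequence.

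There is no real obstacle: the only point that needs a line of justification is that $\coker([\ev_{\bz}]|_{\ker D_u})$ is well defined as a finite-dimensional vector space, which follows because $D_u$ is Fredholm (so $\ker D_u$ is finite-dimensional) and $N_{\bx}$ is finite-dimensional. The Fredholm property of $D_u$ also ensures that the short exact sequence takes place among finite-dimensional vector spaces in the cases used in the main text, so all quotients are automatically closed.
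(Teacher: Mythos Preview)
Your proof is correct and is essentially the same as the paper's: the paper applies the snake lemma to the diagram with rows $0 \to \ker D_u \to W^{1,p}(\Sigma,u^*TX) \to \im D_u \to 0$ and $0 \to N_{\bx} \to \Omega^{0,1}(\Sigma,u^*TX)\oplus N_{\bx} \to L^p\Omega^{0,1}(\Sigma,u^*TX) \to 0$, with vertical maps $[\ev_{\bz}]$, $\mathcal{L}_u$, and the inclusion, which yields exactly the sequence you obtain by the direct chase. Your explicit identification of the kernel and image is just the snake lemma unwound in this special case.
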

\begin{proof}
    The statement follows from the snake lemma applied to the diagram
    \[
        \begin{tikzcd}
            0 \ar{r} & \ker D_u \ar{r} \ar{d}{ [\ev_{\bz}] } & W^{1,p}(\Sigma,u^*TX) \ar{r} \ar{d}{ \mathcal{L}_u} &  \ar{r} \im D_u \arrow[hookrightarrow]{d} \ar{r} & 0, \\
            0 \ar{r} & N_{\bx} \ar{r} & \Omega^{0,1}(\Sigma,u^*TX) \oplus N_{\bx} \ar{r} & L^p\Omega^{0,1}(\Sigma,u^*TX)  \ar{r} & 0. \qedhere
        \end{tikzcd} 
    \]
\end{proof}

We now apply \autoref{lem:cokernel of evaluation} to the special case of constant maps from trees, which appears in the proof of \autoref{thm: transversality}.

\begin{lemma}
    \label{lem:surjectivity for constant maps}
    Let $T$ be a tree and $\Sigma = \bigsqcup_{\alpha \in T} C_\alpha$ where $C_\alpha = \P^1$ and let $u \colon \Sigma \to X$ be a constant map with image $x \in X$.  Let $\bz$ be a collection of points in $\Sigma$ of the form $z_{\alpha\beta} \in C_\alpha$ and $z_{\beta\alpha} \in C_\beta$ for every edge $(\alpha,\beta) \in T$. Consider the diagonal
    \[
        \Delta = \Delta^{E(T)} = \{ (x_{e}) ) \in X^{\vv E(T)} \ | \ x_{e} = x_{\bar e} \},
    \]  
    In this case, the map $\mathcal{L}_u$ from \autoref{lem:cokernel of evaluation} is surjective. 
\end{lemma}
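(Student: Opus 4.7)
The plan is to apply \autoref{lem:cokernel of evaluation} and then exploit the tree structure of $T$ to produce preimages by propagation.

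First, since $u$ is constant with image $x$, the pullback $u^*TX$ is canonically the trivial bundle $\Sigma \times T_x X$, and on each component $C_\alpha \cong \P^1$ the linearization $D_u$ reduces to the standard $\bar\partial$ operator on $\mathcal{O}^{\oplus r}$. Therefore
\[
    \ker D_u = \bigoplus_{\alpha \in V(T)} T_x X, \qquad \coker D_u = \bigoplus_{\alpha\in V(T)} H^1(\P^1,\mathcal{O}^{\oplus r}) = 0.
\]
Applying \autoref{lem:cokernel of evaluation} to this $u$ and to the collection $\bz$ gives
\[
    \coker \mathcal{L}_u \cong \coker\bigl( [\ev_{\bz}]|_{\ker D_u} \bigr),
\]
so it suffices to show that the induced linear map
\[
    \Phi \colon (T_x X)^{V(T)} \longrightarrow N_{\bx}
\]
is surjective.

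Next, identify $N_{\bx}$ with the antisymmetric subspace $\{(w_e) \in (T_x X)^{\vv E(T)} : w_e = -w_{\bar e}\}$ via any invariant metric on $X$; under this identification, $\Phi$ sends $(v_\alpha)_{\alpha\in V(T)}$ to the antisymmetric projection of $(v_{v(e)})_{e\in \vv E(T)}$. Thus surjectivity of $\Phi$ is equivalent to the following statement: for every antisymmetric family $(w_e)_{e \in \vv E(T)}$ there exists $(v_\alpha)_{\alpha \in V(T)}$ such that, writing $e = (\alpha,\beta)$,
\[
    v_\alpha - v_\beta = w_e - w_{\bar e} \quad \text{for every } e \in \vv E(T).
\]

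Here is where the hypothesis that $T$ is a tree enters, and it is essentially the only nontrivial point. Choose any root $\alpha_0 \in V(T)$ and set $v_{\alpha_0} = 0$. Since $T$ is a tree, for every other vertex $\alpha$ there is a unique reduced path $\alpha_0, \alpha_1, \ldots, \alpha_N = \alpha$; define
\[
    v_\alpha = \sum_{i=1}^N \bigl( w_{(\alpha_{i-1},\alpha_i)} - w_{(\alpha_i,\alpha_{i-1})} \bigr).
\]
The absence of cycles in $T$ makes this definition path-independent, and by construction the required compatibility $v_\alpha - v_\beta = w_e - w_{\bar e}$ holds along every edge. This exhibits the preimage and completes the proof.
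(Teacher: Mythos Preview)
Your proof is correct and follows essentially the same route as the paper: reduce via \autoref{lem:cokernel of evaluation} to the surjectivity of the linear map $(T_xX)^{V(T)} \to N_{\bx}$, then use the tree hypothesis to produce preimages. The only cosmetic difference is that the paper argues the last step by induction on $|V(T)|$ (adding a leaf preserves surjectivity), whereas you integrate explicitly along the unique path from a chosen root; these are equivalent manifestations of $H^1(T)=0$.
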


\begin{proof}
    Since $u$ is constant, the restriction of $D_u$ to every component $C_\alpha$ is the standard $\delbar$ operator
    \[
        \delbar \colon W^{1,p}(\P^1,\C) \otimes T_x X \to L^p\Omega^{0,1}(\P^1, \C) \otimes T_x X
    \]
    which is surjective as $(\coker \delbar)^* \cong H^0(\P^1, K_{\P^1}) \otimes T_x X = 0$. The kernel $\ker\delbar \cong T_x X$ is given by constant maps. Instead of the set $\vv E = \vv E(T)$ of edges with orientations, it is convenient to use the set of $E = E(T)$ of edges. To that end, pick an arbitrary orientation on each of the edges in $E$, so that 
    \[
        X^{\vv E} = \prod_{e\in E} (X\times X)
    \]
    where the first summand corresponds to the beginning and the second to the end of $E$; then $\Delta$ is the product of diagonals $X \subset X \times X$ over $e\in E$. Let $V_\alpha = T_x X$ be the copy of $T_x X$ for every $\alpha \in T$, and similarly $V_e = T_x X$ for $e \in E$.  By \autoref{lem:cokernel of evaluation}, the cokernel of $\mathcal{L}_u$ is isomorphic to the cokernel of the map
    \[
         b  \colon \bigoplus_{\alpha\in T} V_\alpha \to N_{\bx}
    \]
    where $N_{\bx}$ is the normal space to the sum of diagonals in $\bigoplus_{e\in E} V_e \oplus V_e$. Alternatively, the map above is identified with
    \[
        b \colon \colon \bigoplus_{\alpha\in T} V_\alpha \to \bigoplus_{e \in E} V_e
    \]
    where $b \colon V_\alpha \to V_e$ is the identity map when $\alpha$ is the beginning of $e$ and minus the identity when $\alpha$ is the end of $e$. It is clear from this description that adding one vertex and one edge to $T$ perserves surjectivity of $b$. Therefore, by induction with respect to $|T|$, the map $b$ is surjective for every tree $T$.
\end{proof}

\appendix
\section{Fredholm maps} 
Recall that a smooth map $f \colon X \to Y$ between Banach manifolds is said to be Fredholm if for every $p \in X$ the differential $d_p f \colon T_p X \to T_{f(p)} Y$ is a Fredholm operator. The index of $f$ at $p$ is defined as $\mathrm{ind}(d_p f)$. If $X$ is connected, which we will assume in this section, the index does not depend on $p$. In this section we will always consider $C^1$ maps whose derivative has closed image. Finally, all results of this section have obvious analogs for sections of Banach vector bundles, after replacing the differential by a covariant derivative. 

The following is used to compute in index computations throughout the paper. 

\begin{lemma}
    \label{lem: fredholm stabilization}
        Let $X$ and $Y$ be Banach manifolds and let $U_0$ and $U_1$ be finite dimensional manifolds. Let  $\overline f \colon X \times U_0 \to Y \times U_1$ be a $C^1$ map and denote its projection on $Y$ by $f$. If for every $u \in U_0$ the map $f(\cdot, u)$ is Fredholm, then $\overline f$ is Fredholm of index
    \begin{equation}
        \label{eqn: index of stabilization}
        \mathrm{ind}(\overline f) = \mathrm{ind}(f) + \dim(U_0) - \dim(U_1). 
    \end{equation}
\end{lemma}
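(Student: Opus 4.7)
The plan is to show that at each point of $X \times U_0$ the differential of $\overline{f}$ is a Fredholm operator of the asserted index, using the standard fact that a Fredholm operator remains Fredholm with unchanged index after a compact (in particular, finite-rank) perturbation.

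Fix $(x,u) \in X \times U_0$. Write $\overline f = (f, g)$ where $g \colon X \times U_0 \to U_1$, so that the differential splits in block form
\[
d\overline{f}_{(x,u)} = \begin{pmatrix} \partial_x f & \partial_u f \\ \partial_x g & \partial_u g \end{pmatrix} \colon T_x X \oplus T_u U_0 \to T_{f(x,u)} Y \oplus T_{g(x,u)} U_1.
\]
The diagonal operator $L(v,w) = (\partial_x f \cdot v, 0)$ is Fredholm of index $\mathrm{ind}(\partial_x f) + \dim U_0 - \dim U_1$ because its kernel is $\ker(\partial_x f) \oplus T_u U_0$ and its cokernel is $\mathrm{coker}(\partial_x f) \oplus T_{g(x,u)} U_1$.

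Next, note that the difference $d\overline{f}_{(x,u)} - L$ has image contained in $\partial_u f(T_u U_0) \oplus T_{g(x,u)} U_1$, which is finite-dimensional: the block $\partial_u f$ has finite-rank domain $T_u U_0$, and the whole second row $(\partial_x g, \partial_u g)$ lands in the finite-dimensional space $T_{g(x,u)} U_1$. Hence $d\overline{f}_{(x,u)} - L$ is of finite rank, a fortiori compact, so $d\overline{f}_{(x,u)} = L + (d\overline{f}_{(x,u)} - L)$ is Fredholm with $\mathrm{ind}(d\overline{f}_{(x,u)}) = \mathrm{ind}(L) = \mathrm{ind}(f) + \dim U_0 - \dim U_1$, which is what we wanted.

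There is no real obstacle here; the only thing to be careful about is that $\partial_x f$ denotes the differential at $x$ of the map $f(\cdot, u) \colon X \to Y$, whose Fredholmness is precisely the hypothesis, and that the finite-dimensionality of both $U_0$ and $U_1$ is used exactly to make the off-diagonal blocks and the bottom row into a compact perturbation of $L$.
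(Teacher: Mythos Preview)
Your proof is correct and follows essentially the same approach as the paper: reduce to the linearized block operator, observe that the ``diagonal'' operator $(\partial_x f, 0)$ is Fredholm of the claimed index, and conclude by noting that the difference is a finite-rank (hence compact) perturbation. The paper states this slightly more tersely, but the argument is the same.
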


\begin{proof}
    By looking at the derivatives, it suffices to consider the case when $X$ and $Y$ are Banach spaces, $U_0$, $U_1$ are finite dimensional vector spaces, and $\overline f$ is a bounded linear map of the form
    \begin{gather*}
        \overline f \colon X \oplus U_0 \to Y \oplus U_1, \\
        \overline f =
        \begin{pmatrix}
            f & * \\
            * & *
        \end{pmatrix}
    \end{gather*}
    such that $f \colon X \to Y$ is a Fredholm operator. The operator $f \oplus 0: X \oplus U_0 \to Y \oplus U_1$ satisfies
    \begin{align*}
        \ker(f \oplus 0) &= \ker (f) \oplus U_0, \\
        \coker (f\oplus 0) &\cong \coker(f) \oplus U_1. 
    \end{align*}
    Therefore, $f\oplus 0$ is Fredholm of index given by the right-hand side of \eqref{eqn: index of stabilization}. The difference $\overline f - f\oplus 0$ is a bounded operator  with finite-dimensional image; therefore, $\mathrm{ind}(\overline f) = \mathrm{ind}(f \oplus 0)$. 
\end{proof}

The next lemma allows us to relate the index of the projection from universal moduli spaces to the space of almost complex structures $\cJ$ to the index of the linearization with $J \in \cJ$ fixed. 

\begin{lemma}
    \label{lem: fredholm projection}
    Let $X$, $Y$, and $Z$ be Banach manifolds and let $S \subset Z$ be a  $C^1$ submanifold. Let $f \colon X \times Y \to Z$ be a smooth map which is transverse to $S$. Set $\widehat S = f^{-1}(S)$. Denote by $\pi \colon \widehat S \to Y$ the projection on $Y$, by $d_X f$ the restriction of $df$ to $TX$, and by $(\cdot)^N$ the projection on the normal bundle $N$ of $S$. For every $p = (x,y) \in \widehat S$ we have
    \begin{gather*}
        \ker (d\pi)_p \cong \ker ( (d_X f)_p^N \colon T_x X \to N_{f(p)} ), \\
        \coker (d\pi)_p \cong \coker ((d_X f)_p^N \colon T_x X \to N_{f(p)} ).
    \end{gather*}
\end{lemma}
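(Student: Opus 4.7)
The plan is to work entirely with the induced linear map structure at a fixed point $p = (x,y) \in \widehat S$, after first identifying $T_p \widehat S$ explicitly via transversality. Since $f$ is transverse to $S$, the preimage $\widehat S = f^{-1}(S)$ is a Banach submanifold, and its tangent space at $p$ is
\[
T_p \widehat S = \{(v,w) \in T_x X \oplus T_y Y \mid (d_X f)_p(v) + (d_Y f)_p(w) \in T_{f(p)} S \},
\]
or equivalently the kernel of the continuous linear map
\[
\Phi \colon T_x X \oplus T_y Y \to N_{f(p)}, \qquad \Phi(v,w) = (d_X f)_p^N(v) + (d_Y f)_p^N(w).
\]
Transversality of $f$ with $S$ at $p$ is exactly the statement that $\Phi$ is surjective.

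For the kernel identification, I would observe that $(d\pi)_p(v,w) = w$, so $(v,w) \in \ker(d\pi)_p$ iff $w=0$ and $(v,0) \in T_p\widehat S$, which means $(d_X f)_p^N(v) = 0$. The map $v \mapsto (v,0)$ thus gives an isomorphism $\ker((d_X f)_p^N) \xrightarrow{\cong} \ker(d\pi)_p$.

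For the cokernel, I would define the auxiliary map
\[
\psi \colon T_y Y \to \coker((d_X f)_p^N), \qquad \psi(w) = \bigl[(d_Y f)_p^N(w)\bigr],
\]
and show two things. First, $\psi$ is surjective: given any $\xi \in N_{f(p)}$, surjectivity of $\Phi$ produces $(v,w)$ with $\Phi(v,w) = \xi$, and then $\psi(w) = [\xi]$ in the quotient. Second, $\ker \psi = \mathrm{im}(d\pi)_p$: indeed, $\psi(w) = 0$ iff $(d_Y f)_p^N(w) = -(d_X f)_p^N(v)$ for some $v$, iff $(v,w) \in T_p \widehat S$ for some $v$, iff $w \in \mathrm{im}(d\pi)_p$. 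The first isomorphism theorem then gives $\coker(d\pi)_p = T_y Y / \mathrm{im}(d\pi)_p \cong \coker((d_X f)_p^N)$.

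There is no real obstacle here beyond bookkeeping; the only point that requires a small verification is that $\mathrm{im}(d\pi)_p$ is closed, so that the quotient $\coker(d\pi)_p$ is a Banach space and the isomorphism with $\coker((d_X f)_p^N)$ is meaningful as one of topological vector spaces. This follows because $\mathrm{im}(d\pi)_p = \psi^{-1}(0)$, and $\psi$ is continuous into the Banach space $\coker((d_X f)_p^N)$ (which is Banach since $(d_X f)_p$ is assumed to have closed image by the blanket hypothesis of the section). Alternatively, one can phrase the whole argument as a snake-lemma application to the short exact sequence $0 \to T_x X \to T_x X \oplus T_y Y \to T_y Y \to 0$ mapped by $(d_X f)_p^N$, $\Phi$, and $(d_Y f)_p^N$ into $0 \to N_{f(p)} = N_{f(p)} \to 0 \to 0$, which produces the two desired isomorphisms simultaneously.
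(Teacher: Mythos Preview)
Your proof is correct and essentially the same as the paper's: the paper invokes the snake lemma directly (applied to the diagram with rows $0 \to T_p\widehat S \to T_p(X\times Y) \xrightarrow{(df)^N_p} N_{f(p)} \to 0$ and $0 \to T_yY \xrightarrow{=} T_yY \to 0 \to 0$, with vertical maps $(d\pi)_p$, projection to $T_yY$, and zero), which is exactly the alternative you sketch at the end, and your explicit kernel/cokernel computation simply unpacks the resulting four-term exact sequence by hand.
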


\begin{proof}
    The statement follows from the snake lemma applied to the diagram
    \[
        \begin{tikzcd}
        0 \ar{r} & T_p \widehat S \ar{d}{(d\pi)_p} \ar{r} & T_p (X\times Y) \ar{d} \ar{r}{(d f)^N_p} \ar{r} & N_{f(p)} \ar{r} \ar{d} & 0 \\
        0 \ar{r} & T_y Y \ar{r} & T_y Y \ar{r} & 0 \ar{r} & 0 .
        \end{tikzcd}
    \]
    
\end{proof}

We will also use fiber products of Fredholm maps. Recall that the fiber product of maps $f \colon X \to Y$ and $g \colon Y \to Z$ is defined by
\[
    F = X \times_Z Y = \{ (x,y) \in X \times Y \ | \ f(x) = g(y) \},
\]
and it fits into the commutative diagram
\[
    \scalebox{0.85}{$
    \xymatrix{
    & F \ar[dl] \ar[dr] & \\
    X \ar[dr]_f & & Y \ar[dl]^g \\
    & Z &
    }
    $}
\]
The map $f$ is transverse to $g$ if for every $p=(x,y) \in F$ with value $z = f(x)=g(y)$ the map 
\[
    L_p = (df)_x - (dg)_y \colon T_x X \oplus T_y Y \to T_z Z
\]
is surjective. In that case, $F$ is a smooth submanifold of $X\times Y$. Denote by $\pi_X \colon F \to X$ and $\pi_Y \colon F \to Y$ the projection maps. By \autoref{lem: fredholm projection} applied to $S$ being the diagonal in $Z \times Z$, if $f$ is transverse to $g$, then
\begin{gather*}
    \ker( d\pi_X)_{p} ) \cong \ker( (dg)_y ) \quad\text{and}\quad     \coker( d\pi_X)_{p} ) \cong \coker( (dg)_y ), \\
    \ker( d\pi_Y)_{p} ) \cong \ker( (df)_x ) \quad\text{and}\quad     \coker( d\pi_Y)_{p} ) \cong \coker( (df)_x ).
\end{gather*}
In particular, if $f$ is Fredholm, then so is $\pi_Y$ and $\mathrm{ind}(\pi_Y) = \mathrm{ind}(f)$, and similarly for $g$ and $\pi_X$. In a non-transverse situation, the failure of transversality at $p\in F$ is measured by $\dim\coker(L_p)$, and the following generalization holds.

\begin{lemma}
    \label{lem: fiber product}
    In the situation described above, suppose that $f$ is a Fredholm map. For every $p \in F$ there exists a $C^1$ submanifold $S \subset X \times Y$ containing an open neighborhood of $p$ in $F$ and such that the projection $\pi_Y \colon S \to Y$ is a Fredholm map of index
    \[
        \mathrm{ind}(\pi_Y) = \mathrm{ind}(f) + \dim\coker(L_p).
    \]  
\end{lemma}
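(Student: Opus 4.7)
The plan is to repair the failure of transversality by adding a finite-dimensional correction in the target, so that the problem reduces to a transverse fiber product to which the discussion immediately preceding the lemma applies directly. Fix $p = (x,y) \in F$ with common value $z = f(x) = g(y)$. Since $df_x$ is Fredholm, $\mathrm{im}(df_x)$ is closed of finite codimension in $T_z Z$; hence so is the larger subspace $\mathrm{im}(L_p) = \mathrm{im}(df_x) + \mathrm{im}(dg_y)$, with codimension exactly $d \coloneq \dim \coker(L_p)$. Choose a finite-dimensional linear complement $V \subset T_z Z$, so $V \oplus \mathrm{im}(L_p) = T_z Z$ and $\dim V = d$.

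Next I would work in charts around $x$, $y$, $z$, identifying a neighborhood of $z$ with an open subset of $T_z Z$, and define the auxiliary map
\[
\phi \colon Y \times V \to Z, \qquad \phi(y', v) = g(y') + v.
\]
Then $d\phi_{(y,0)} = dg_y \oplus \mathrm{id}_V$, so
\[
\mathrm{im}(df_x) + \mathrm{im}(d\phi_{(y,0)}) = \mathrm{im}(L_p) + V = T_z Z,
\]
which says that $f$ is transverse to $\phi$ at $(x,y,0)$. By the transverse fiber product discussion immediately preceding the lemma, the fiber product
\[
\widetilde S \coloneq X \times_Z (Y \times V) = \{(x', y', v) \in X \times Y \times V : f(x') = g(y') + v\}
\]
is a $C^1$ submanifold of $X \times Y \times V$ near $(x,y,0)$, and the projection $\rho \colon \widetilde S \to Y \times V$ is Fredholm of index $\mathrm{ind}(\rho) = \mathrm{ind}(f)$.

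Finally, let $S = \mathrm{pr}_{X \times Y}(\widetilde S) = \{(x',y') \in X \times Y : f(x') - g(y') \in V\}$. The restriction of $\mathrm{pr}_{X \times Y}$ to $\widetilde S$ is a diffeomorphism onto $S$, with smooth inverse $(x',y') \mapsto (x', y', f(x') - g(y'))$, so $S$ inherits the structure of a $C^1$ submanifold of $X \times Y$ near $p$. Any $(x',y') \in F$ close to $p$ satisfies $f(x') - g(y') = 0 \in V$, hence $F \subset S$ near $p$ as required. Under the identification $\widetilde S \cong S$, the projection $\pi_Y \colon S \to Y$ is identified with the composition $\mathrm{pr}_Y \circ \rho$, where $\mathrm{pr}_Y \colon Y \times V \to Y$ is Fredholm of index $\dim V = d$ by \autoref{lem: fredholm stabilization}. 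Additivity of Fredholm indices under composition yields
\[
\mathrm{ind}(\pi_Y) = \mathrm{ind}(\rho) + \dim V = \mathrm{ind}(f) + \dim \coker(L_p),
\]
as desired. The only real subtlety is making the chart-level construction of $\phi$ rigorous on the Banach manifold $Z$: once a chart at $z$ is fixed so that $V \subset T_z Z$ may be viewed as an affine subspace through $z$, everything else is a formal manipulation with differentials and a direct application of the results already recorded.
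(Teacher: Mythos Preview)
Your proof is correct and, once unwound, produces exactly the same submanifold $S$ as the paper's proof, reached from a dual viewpoint. The paper \emph{projects the target}: it chooses a projection $\pi_V$ onto $V \coloneq \im L_p$, observes that $\pi_V\circ f$ and $\pi_V\circ g$ are transverse, and sets $S = \{(x',y') : \pi_V f(x') = \pi_V g(y')\}$, i.e.\ $f(x')-g(y') \in \ker\pi_V$. You instead \emph{thicken the source map}: you replace $g$ by $\phi(y',v)=g(y')+v$ with $v$ ranging over a complement to $\im L_p$, form the transverse fiber product $\widetilde S$, and project out $v$. Since your $V$ is a choice of complement to $\im L_p$, just as $\ker\pi_V$ is in the paper, both constructions yield $S = \{(x',y') : f(x')-g(y') \in \text{(complement of }\im L_p)\}$. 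The paper's route is slightly shorter, avoiding the intermediate $\widetilde S\subset X\times Y\times V$; your route has the minor advantage that the index computation becomes a transparent composition $\pi_Y = \mathrm{pr}_Y\circ\rho$ with each factor's index already known. One small point: your claim that $S$ ``inherits the structure of a $C^1$ submanifold'' from the diffeomorphism with $\widetilde S$ is more cleanly justified by noting directly that $S$ is the preimage of your $V$ under $(x',y')\mapsto f(x')-g(y')$, whose derivative $L_p$ is transverse to $V$ by construction.
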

\begin{proof}
    Since this is a local statement, without loss of generality assume that $Z$ is a Banach space and $z=0$. Let $V = \im L_p$ and let $\pi_V \colon Z \to V$ be a projection on $V$  (since $L_p$ is Fredholm, there exists such a projection). By construction, the operator
    \[
        \pi_V \circ L_p \colon T_x X \oplus T_y Y \to V
    \]
    is surjective. Therefore,  $\pi_V\circ f$ is transverse to $\pi_V \circ g$ at $p$, and therefore at every point in some neighborhood $U \subset X \times Y$ of $p$. Therefore, 
    \[
        S = \{ (x,y) \in U \ | \ \pi_V( f(x) ) = \pi_V( g(y) )
    \]
    is a $C^1$ submanifold of $U$ containing $F$ and the statement follows from the transverse case applied to the maps $\pi_V\circ f$ and $\pi_V \circ g$. 
\end{proof}

\bibliographystyle{plain} 
\bibliography{bib}

\begin{thebibliography}{10}

\bibitem{ACGH}
E.~Arbarello, M.~Cornalba, and P.~Griffiths.
\newblock {\em Geometry of Algebraic Curves}.
\newblock Grundlehren der mathematischen Wissenschaften. Springer, Berlin,
  Germany, 2011 edition, September 2004.

\bibitem{BLLRST}
R.~Beheshti, B.~Lehmann, C.~Lian, E.~Riedl, J.~Starr, and S.~Tanimoto.
\newblock On the asymptotic enumerativity property for {F}ano manifolds.
\newblock {\em Forum Math. Sigma}, 12:Paper No. e112, 2024.

\bibitem{B1}
A.~Bertram.
\newblock {T}owards a {S}chubert {C}alculus for {M}aps from a {R}iemann
  {S}urface to a {G}rassmanian.
\newblock {\em Int. J. Math.}, 05:811--825, 1994.

\bibitem{bdw}
A.~Bertram, G.~Daskalopoulos, and R.~Wentworth.
\newblock Gromov invariants for holomorphic maps from {R}iemann surfaces to
  {G}rassmannians.
\newblock {\em J. Amer. Math. Soc.}, 9(2):529--571, 1996.

\bibitem{BP}
A.~Buch and R.~Pandharipande.
\newblock Tevelev degrees in {G}romov--{W}itten theory.
\newblock {\em arXiv:2112.14824}, 2021.

\bibitem{cd}
R.~Cavalieri and E.~Dawson.
\newblock Tropical {T}evelev degrees.
\newblock {\em arXiv 2407.20025}, 2024.

\bibitem{Cela}
A.~Cela.
\newblock Quantum euler class and virtual tevelev degrees of fano complete
  intersections.
\newblock {\em Ark. Mat.}, 61(2):301--322, 2023.

\bibitem{CIL}
A.~Cela and A.~Iribar~L\'opez.
\newblock Genus 0 logarithmic and tropical fixed-domain counts for {H}irzebruch
  surfaces.
\newblock {\em J. Lond. Math. Soc. (2)}, 109(4):Paper No. e12892, 28, 2024.

\bibitem{CL2}
A.~Cela and C.~Lian.
\newblock {Fixed-domain curve counts for blow-ups of projective space}.
\newblock {\em arXiv:2303.03433}, 2023.

\bibitem{CL}
A.~Cela and C.~Lian.
\newblock Generalized {T}evelev degrees of {$\Bbb P^1$}.
\newblock {\em J. Pure Appl. Algebra}, 227(7):Paper No. 107324, 30, 2023.

\bibitem{cl_hirzebruch}
A.~Cela and C.~Lian.
\newblock Curves on {H}irzebruch surfaces and semistability.
\newblock {\em Michigan Math. J.}, 2025.
\newblock To appear.

\bibitem{CPS}
A.~Cela, R.~Pandharipande, and J.~Schmitt.
\newblock Tevelev degrees and {H}urwitz moduli spaces.
\newblock {\em Math. Proc. Cambridge Philos. Soc.}, 173(3):479--510, 2022.

\bibitem{FL}
G.~Farkas and C.~Lian.
\newblock Linear series on general curves with prescribed incidence conditions.
\newblock {\em J. Inst. Math. Jussieu}, 22(6):2857--2877, 2023.

\bibitem{hirshi}
A.~Hirschi.
\newblock {Properties of Gromov-Witten invariants defined via global Kuranishi
  charts}.
\newblock {\em arXiv:2312.03625, to appear in Ann. Henri Lebesgue}.

\bibitem{ionel3}
E.~Ionel.
\newblock Genus {$1$} enumerative invariants in {$\bold P^n$} with fixed {$j$}
  invariant.
\newblock {\em Duke Math. J.}, 94(2):279--324, 1998.

\bibitem{ionel2}
E.~Ionel and T.~Parker.
\newblock The {G}romov invariants of {R}uan-{T}ian and {T}aubes.
\newblock {\em Math. Res. Lett.}, 4(4):521--532, 1997.

\bibitem{lian_pr}
C.~Lian.
\newblock Degenerations of complete collineations and geometric {T}evelev
  degrees of {$\mathbb{P}^r$}.
\newblock {\em J. Reine Angew. Math.}, 817:153–212, 2024.

\bibitem{LP}
C.~Lian and R.~Pandharipande.
\newblock Enumerativity of virtual {T}evelev degrees.
\newblock {\em Ann. Sc. Norm. Super. Pisa Cl. Sci.}, 26(1):71--89, 2025.

\bibitem{mo}
A.~Marian and D.~Oprea.
\newblock Virtual intersections on the {Q}uot scheme and {V}afa-{I}ntriligator
  formulas.
\newblock {\em Duke Math. J.}, 136(1):81--113, 2007.

\bibitem{mop}
A.~Marian, D.~Oprea, and R.~Pandharipande.
\newblock The moduli space of stable quotients.
\newblock {\em Geom. Topol.}, 15(3):1651--1706, 2011.

\bibitem{mcduff}
D.~McDuff and D.~Salamon.
\newblock {\em {J-holomorphic Curves and Symplectic Topology}}.
\newblock American Mathematical Society colloquium publications. American
  Mathematical Society, 2012.

\bibitem{ruan1}
Y.~Ruan and G.~Tian.
\newblock {A mathematical theory of quantum cohomology}.
\newblock {\em J. Differential Geom.}, 42(2):259--367, 1995.

\bibitem{ruan2}
Y.~Ruan and G.~Tian.
\newblock Higher genus symplectic invariants and sigma models coupled with
  gravity.
\newblock {\em Invent. Math.}, 130(3):455--516, 1997.

\bibitem{st}
B.~Siebert and G.~Tian.
\newblock On quantum cohomology rings of {F}ano manifolds and a formula of
  {V}afa and {I}ntriligator.
\newblock {\em Asian J. Math.}, 1(4):679--695, 1997.

\bibitem{tev}
J.~Tevelev.
\newblock Scattering amplitudes and stable curves.
\newblock {\em Geom. Topol.}, 2024.
\newblock To appear.

\bibitem{wendl}
C.~Wendl.
\newblock {Lectures on Holomorphic Curves in Symplectic and Contact Geometry}.
\newblock {\em arXiv:1011.1690}, 2014.

\bibitem{zinger2}
A.~Zinger.
\newblock Enumeration of genus-two curves with a fixed complex structure in
  {$\Bbb P^2$} and {$\Bbb P^3$}.
\newblock {\em J. Differential Geom.}, 65(3):341--467, 2003.

\bibitem{zinger3}
A.~Zinger.
\newblock Enumerative vs.\ symplectic invariants and obstruction bundles.
\newblock {\em J. Symplectic Geom.}, 2(4):445--543, 2004.

\bibitem{zinger}
A.~Zinger.
\newblock {Transversality for J-holomorphic maps: a complex-geometric
  perspective}.
\newblock {\em
  https://www.math.stonybrook.edu/~azinger/mat645-spr22/GrTrans.pdf}, 2022.

\end{thebibliography}

\end{document}